\newcommand{\eps}{\varepsilon}
\newcommand{\bt}{\begin{thr}{\bf Theorem. }}
\newcommand{\satz}{\begin{thr}{\bf Theorem. }\rm}
\newcommand\E{\mathbb{E}}
\newcommand\R{\mathbb{R}}
\newcommand\Rp{\R_{\,+}}
\newcommand\iin{_{i=1}^N}
\newcommand\jin{_{j=1}^N}
\newcommand\cij{c(i,j)}
\newcommand\piij{\pi(i,j)}
\newcommand\cpi{\langle c, \pi \rangle}
\renewcommand\i{\infty}
\renewcommand\phi{\varphi}
\newcommand\la{\lambda}
\newcommand\lan[1]{\langle #1 \rangle}
\newcommand\hphi{\widehat{\varphi}}
\newcommand\hpsi{\widehat{\psi}}
\newcommand\hpi{\widehat{\pi}}
\newcommand\hh{\widehat{h}}
\newcommand\N{\mathbb{N}}
\newcommand\Z{\mathbb{Z}}
\newcommand\T{\mathbb{T}}
\newcommand{\IM}{I_{\text{middle}}}
\newcommand{\Prel}{P^{\mathrm{{rel}}}}
\newcommand{\Drel}{D^{\mathrm{{rel}}}}
\newcommand{\Pipart}{\Pi^{\mathrm{part}}}
\renewcommand{\rho}{\varrho}
\newcommand\I{\mathbf{1}}
\newcommand\ldot{\,.\,}
\newtheorem{theorem}{Theorem}[section]  
\newtheorem{definition}[theorem]{Definition}
\newtheorem{lemma}[theorem]{Lemma}
\newtheorem{proposition}[theorem]{Proposition}
\newtheorem{example}[theorem]{Example}
\title{On the Duality Theory for the Monge--Kantorovich Transport Problem}
\author{Mathias Beiglb\"ock, Christian L\'eonard, Walter Schachermayer\thanks{The first author acknowledges financial support from the Austrian Science
Fund (FWF) under grant P21209. The third author acknowledges support from the Austrian Science Fund
(FWF) under grant P19456, from the Vienna Science and Technology Fund (WWTF) under grant
MA13 and by the Christian Doppler Research Association (CDG).
All authors thank A.~Pratelli for helpful discussions on the topic of this paper. We also thank L.~Summerer for his advice. }}
\begin{document}

\date{}
\maketitle


\section{Introduction}
This article, which is an accompanying paper to \cite{BeLS09a}, consists of two parts: In section 2 we present a version of
Fenchel's perturbation method for the duality theory of the Monge--Kantorovich problem of optimal transport. The treatment is elementary as we suppose that
the spaces $(X,\mu), (Y,\nu)$, on which the optimal transport problem \cite{Vill03,Vill09} is defined, simply equal the finite set
$\{1,\dots,N\}$ equipped with uniform measure. In this setting the optimal transport problem reduces to a finite-dimensional linear programming problem.

The purpose of this first part of the paper is rather didactic: it
should stress some features of the linear programming nature of
the optimal transport problem, which carry over also to the case
of general polish spaces $X, Y$ equipped with Borel probability
measures $\mu,\nu$, and general Borel measurable cost functions
$c: X\times Y \to [0,\infty]$. This general setting is analyzed in
detail in \cite{BeLS09a}; section 2 below may serve as a
motivation for the arguments in the proof of Theorems 1.2 and 1.7
of \cite{BeLS09a} which pertain to the general duality theory.

\medskip

The second --- and longer --- part of the paper, consisting of sections 3 and 4 is of a quite different nature.

\medskip

Section 3 is devoted to illustrate a technical feature of
\cite[Theorem 4.2]{BeLS09a} by an explicit example. The technical
feature is the appearance of the singular part $\hh^s$ of the dual
optimizer $\hh\in L^1 (X\times Y, \pi)^{**}$ obtained in
(\cite[Theorem 4.2]{BeLS09a}). In Example 3.1 below we show that,
in general, the dual optimizer $\hh$ does indeed contain a
non-trivial singular part. In addition, this example allows to
observe in a rather explicit way how this singular part ``builds
up'', for an optimizing sequence $(\phi_n \oplus \psi_n)^\i_{n=1}
\in L^1 (X\times Y,\pi)$ which converges to $\hh$ with respect to
the weak-star topology. The construction of this example, which is
a variant of an example due to L.~Ambrosio and A.~Pratelli
\cite{AmPr03}, is rather longish and technical. Some motivation
for this construction will be given at the end of Section 2.

\medskip

Section 4 pertains to a  modified version of the duality relation in the Monge-Kantorovich transport problem. Trivial counterexamples such as \cite[Example 1.1]{BeLS09a} show that in the case of a measurable cost function $c:X\times Y\to [0,\infty]$ there may be a duality gap.
 The main result (Theorem 1.2) of \cite{BeLS09a} asserts that one may avoid this difficulty by considering a suitable relaxed form of the primal problem; if one does so, duality holds true in complete generality.
In a different vein, one may leave the primal problem unchanged,
and overcome the difficulties encountered in the above mentioned
simple example by considering a slightly modified dual problem
(cf.\ \cite[Remark 3.4]{BeLS09a}). In the last part of the article
we consider a certain twist of the construction given in section
3, which allows us to prove that this dual relaxation does not
lead to a general duality result.

\section{The finite case}\label{s2}

In this section we present the duality theory of optimal transport for the finite case: Let $X=Y=\{1,\ldots, N\}$ and let $\mu=\nu$ assign probability $N^{-1}$ to each of the points $1,\ldots, N$. Let $c=(c(i,j))_{i,j=1}^N$ be an $\Rp$-valued $N\times N$ matrix.

The problem of optimal transport then becomes the subsequent linear optimization problem
\begin{align}\label{1}
\langle c, \pi \rangle := \sum\iin \ \sum\jin \pi(i,j) \, c(i,j) \to \min, ~~~~ \pi\in\R^{N^2},
\end{align}
under the constraints
\begin{align*}
& \sum\jin \pi(i,j) =  N^{-1},  & i=1,\ldots, N, \\
& \sum\iin \pi(i,j) =  N^{-1}, & j=1,\ldots, N, \\
&  \ \pi(i,j)\geq 0,  & i,j=1,\ldots, N.
\end{align*}

Of course, this is an easy and standard problem of linear optimization; yet we want to treat it in some detail in
order to develop intuition and concepts for the general case considered in \cite{BeLS09a} as well as in section 3 .

For the two sets of \emph{equality} constraints we introduce $2N$ Lagrange multipliers $(\phi(i))\iin$ and
$(\psi(j))\jin$ taking values in $\mathbb{R}$, and for the inequality constraints (4) we introduce Lagrange
multipliers $(\rho_{ij})^N_{i,j=1}$ taking values in $\R_+$. The Lagrangian functional $L(\pi,\phi,\psi,\rho)$ then is given by
\begin{align*}
L(\pi,\phi,\psi,\rho)
 = & \sum\iin\sum\jin \cij \, \piij
\\ & -\sum\iin \phi(i) \left( \sum\jin \piij-N^{-1} \right)
\\ & - \sum\jin \psi (j) \left( \sum\iin \piij-N^{-1} \right)
\\ & - \sum\iin \sum\jin \rho(i,j) \piij,
\end{align*}
where the $\pi(i,j), \phi(i)$ and $\psi(j)$ range in $\R$, while the $\rho(i,j)$ range in $\R_+$.

It is designed in such a way that
\[
C(\pi):= \sup_{\phi,\psi,\rho} L(\pi,\phi,\psi,\rho) = \cpi +
\chi_{\Pi(\mu,\nu)}(\pi),
\]
where $\Pi(\mu,\nu)$ denotes the admissible set of $\pi$'s, i.e., the probability measures on $X\times Y$ with marginals $\mu$ and $\nu$, and $\chi_A(\,\ldot\,)$ denotes the indicator function of a set $A$ in the sense of convex function theory, i.e., taking the value $0$ on $A$, and the value $+\i$ outside of $A$.

In particular, we have
\begin{align*}
P := \inf_{\pi\in\mathbb{R}^{N^2}} C(\pi) = \inf_{\pi\in \mathbb{R}^{N^2}} \sup_{\phi,\psi,\rho} L(\pi,\phi,\psi,\rho),
\end{align*}
where $P$ is the optimal value of the primal optimization problem
(1).

To develop the duality theory of the primal problem (1) we pass from {\it inf sup L} to {\it sup inf L}.
Denote by $D(\phi,\psi,\rho)$ the dual function
\begin{align*}
D(\phi,\psi,\rho)
   =& \inf_{\pi\in\mathbb{R}^{N^2}} L (\pi,\phi,\psi,\rho)
\\ =& \inf_{\pi\in\mathbb{R}^{N^2}} \sum\iin \sum\jin \piij [\cij - \phi(i) - \psi(j) - \rho(i,j)]
\end{align*}
$$+ N^{-1} \left[ \sum\iin\phi(i) + \sum\jin\psi(j) \right].$$
Hence we obtain as the optimal value of the dual problem
\begin{equation}\label{FiniteDual}
D:= \sup_{\phi,\psi,\rho} D(\phi,\psi,\rho) =
(\E_\mu[\phi]+\E_\nu[\psi])-\chi_\Psi(\phi,\psi)
\end{equation}
where $\Psi$ denotes the admissible set of $\phi, \psi$, i.e.~satisfying
\[
\phi(i) + \psi(j) +\rho (i,j) = \cij, ~~~~ 1\leq i,j \leq N,
\]
for some non-negative ``slack variables'' $\varrho(i,j).$

Let us show that there is no duality gap, i.e., the values of $P$ and $D$ coincide. Of course, in the present
finite dimensional case, this equality as well as the fact that the {\it inf sup} (resp.\ {\it sup inf}) above is a
{\it min max} (resp.\ {\it a max min}) easily follows from general compactness arguments. Yet we want to verify things
directly using the idea of ``complementary slackness'' of the primal and the dual constraints (good references are, e.g.~\cite{PeSU08,EkTe99,AuEk06}).

We apply ``Fenchel's perturbation map'' to explicitly show the equality $P=D$.
Let $T:\R^{N^2}\to\R^N\times\R^N$ be the linear map defined as
\[
T\left( \big(\piij \big)_{1\leq i,j \leq N} \right) = \left( \left( \sum\jin \piij \right)\iin , \left( \sum\iin \piij
\right)\jin \right)
\]
so that the problem \eqref{1} now can be phrased as
\[
\cpi = \sum\iin\sum\jin \cij\, \piij \to min, ~~~~ \pi\in\Rp^{N^2},
\]
under the constraint
\[
T(\pi) = \left( (N^{-1}, \ldots, N^{-1}),(N^{-1}, \ldots, N^{-1}) \right).
\]
The range of the linear map $T$ is the subspace $E \subseteq\R^N \times\R^N$, of codimension 1, formed by the pairs
$(f,g)$ such that $\sum\limits\iin f(i)=\sum\limits\jin g(j),$ in other words $\mathbb{E}_\mu[f]=\mathbb{E}_\nu [g]$.
We consider $T$ as a map from $\R^{N^2}$ to $E$ and denote by $E_+$ the positive orthant of $E$.

Let $\Phi: E_+ \to [0,\i]$ be the map
\[
\Phi(f,g) = \inf \left\{ \cpi, \ \pi\in\Rp^{N^2}, \ T(\pi)=(f,g) \right\}.
\]
We shall verify explicitly that $\Phi$ is an $\R_+$-valued, convex, lower semi-continuous, positively homogeneous map on $E_+$.

The finiteness and positivity of $\Phi$ follow from the fact that, for $(f,g)\in E_+$, the set of $\pi\in\R_+^{N^2}$
with $T(\pi)=(f,g)$ is non-empty and from the non-negativity of $c$.
As regards the convexity of $\Phi$, let $(f_1,g_1),(f_2,g_2) \in E_+$ and find $\pi_1,\pi_2\in\Rp^{N^2}$
such that $T(\pi_1)=(f_1,g_1), \ T(\pi_2)=(f_2,g_2)$ and
$\langle c,\pi_1\rangle <\Phi(f_1,g_1)+\varepsilon$ as well as $\langle c,\pi_2\rangle <\Phi(f_2,g_2)+\varepsilon$. Then
$$\Phi\left(
\frac{(f_1,g_1)+(f_2,g_2)}{2}\right )
\leq \left\langle c, \frac{\pi_1 +\pi_2}{2}
\right\rangle < \frac{\Phi(f_1,g_1)+\Phi(f_2,g_2)}{2} +\varepsilon,$$ which proves the convexity of $\Phi$.

If $((f_n,g_n))^\infty_{n=1} \in E_+$ converges to $(f,g)$ find
$(\pi_n)^\infty_{n=1}$ in $\R_+^{N^2}$ such that $T(\pi_n)=(f_n,g_n)$ and $\langle c,\pi_n\rangle < \Phi(f_n,g_n)+n^{-1}$.
Note that $(\pi_n)^\infty_{n=1}$ is bounded in $\R^{N^2}_+$, so that there is a subsequence
$(\pi_{n_k})^\infty_{k=1}$ converging to $\pi\in\Rp^{N^2}$.
Hence $\Phi(f,g,)\leq \langle c,\pi\rangle$ showing the lower semi-continuity of $\Phi$.
Finally note that $\Phi$ is positively homogeneous,
i.e., $\Phi(\la f, \la g)=\la\Phi(f,g)$, for $\la\geq 0$. \\

The point $(f_0,g_0)$ with $f_0=g_0=(N^{-1},\dots ,N^{-1})$ is in $E_+$ and $\Phi$ is bounded in a neighbourhood $V$
of $(f_0,g_0).$ Indeed, fixing any $0<a<N^{-1}$
the subsequent set $V$ does the job
$$V=\{(f,g)\in E \ : \vert f(i)-N^{-1}\vert <a, \ \vert g(j)-N^{-1}\vert <a, \quad \mbox{for} \ 1\le i,j\le N\}.$$
The boundedness of the lower semi-continuous convex function
$\Phi$ on $V$ implies that the subdifferential of $\Phi$ at
$(f_0,g_0)$ is non-empty. Considering $\Phi$ as a function on
$\R^{2N}$ (by defining it to equal $+\infty$ on $\R^{2N}\backslash
E_+)$ we may find an element $(\hphi, \hpsi)\in \R^N\times\R^N$ in
this subdifferential. By the positive homogeneity of $\Phi$ we
have
\[
\Phi(f,g) \geq \lan{(\hphi, \hpsi),(f,g)}= \lan{\hphi,f}+\lan{\hpsi,g},
 \ \ \ \mbox{for} \ (f,g)\in\mathbb{R}^N\times\mathbb{R}^N,
\]
and
\[
P = \Phi(f_0,g_0) = \lan{\hphi,f_0}+\lan{\hpsi,g_0}.
\]
By the definition of $\Phi$ we therefore have, for each $\pi\in\Rp^{N^2}$,
\begin{align*}
\cpi & \geq \inf_{\tilde{\pi}\in\Rp^{N^2}}  \{\langle c,\tilde{\pi}\rangle : T(\pi)= T (\tilde{\pi})\}
\\ &  = \Phi(T(\pi))
\\ & \geq \langle T(\pi), (\hat{\phi},\hat{\psi})\rangle
\\ &  =  \sum\iin\sum \jin \pi(i,j)  \, [\hphi(i)+\hpsi(j)]
\end{align*}
so that
\begin{equation}
\cij \geq \hphi(i)+\hpsi(j),  \quad\quad\quad \mbox{for} \ 1\le i,j\le n.
\end{equation}
By compactness, there is $\hpi\in\Pi(\mu,\nu)$, i.e., there is an element $\hpi\in\Rp^{N^2}$ verifying $T(\hpi)=(f_0,g_0)$ such that
\begin{equation}
\lan{c,\hpi} = \lan{\hphi+\hpsi,\hpi}.
\end{equation}

Summing up, we have shown that $\hpi$ and $(\hphi,\hpsi)$ are primal and dual optimizers and that the value of the
primal problem equals the value of the dual problem, namely $\lan{\hphi+\hpsi,\hpi}$.

To finish this elementary treatment of the finite case, let us consider the case when we allow the cost function $c$ to
 take values in $[0,\i]$ rather than in  $[0,\i[$. In this case the primal problem simply loses some dimensions: for
the $(i,j)$'s where $\cij=\i$ we must have $\piij=0$ so that we consider
\begin{equation}
\langle c, \pi \rangle := \sum\iin \ \sum\jin \pi(i,j) \, c(i,j) \to \min, ~~~~ \pi\in\Rp^{N^2}, \nonumber
\end{equation}
where we now optimize over $\pi\in\R^{N^2}_+$ with $\piij=0$ if $\cij=\i$. For the problem to make sense we clearly must
have that there is at least one $\pi\in\Pi(\mu,\nu)$ with $\cpi<\i$. If this non-triviality condition is satisfied, the
above arguments carry over without any non-trivial modification.

\bigskip

We now analyze explicitly the well-known ``complementary slackness conditions'' and interpret them in the present context. For a pair $\hpi$ and $(\hphi,\hpsi)$ of primal and dual optimizers we have
\[
\cij > \hphi(i)+\hpsi(j)  ~\Rightarrow ~ \hpi(i,j)=0,
\]
and
\[
\hpi(i,j)>0~ \Rightarrow ~ \cij = \hphi(i)+\hpsi(j).
\]
Indeed, these relations follow from the admissibility condition $c \geq \hat{\phi}+\hat{\psi}$ and the duality relation $\lan{\hpi,c-(\hphi+\hpsi)}=0$.

This motivates the following definitions in the theory of optimal transport (see, e.g., \cite{RaRu96}
for (a) and \cite{ScTe08} for (b).)

\begin {definition}
Let $X=Y=\{1,\ldots,N\}$ and $\mu=\nu$ the uniform distribution on $X$ and $Y$ respectively, and let $c: X\times Y \to \Rp$ be given.
\begin{description}
\item[(a)] A subset $\Gamma\subseteq X\times Y$ is called ``cyclically $c$-monotone'' if, for $(i_1,j_1), \ldots, (i_n, j_n) \in \Gamma$ we have
\begin{align}\label{P6}
\sum_{k=1}^n c(i_k, j_k) \leq \sum_{k=1}^n c(i_k, j_{k+1}),
\end{align}
where $j_{n+1}=j_1$.
\item[(b)] A subset $\Gamma \subseteq X\times Y$ is called ``strongly cyclically $c$-monotone'' if there are functions $\phi,\psi$ such that $\phi(i)+\psi(j)\leq \cij$, for all $(i,j)\in X\times Y$, with equality holding true for $(i,j) \in \Gamma$.
\end{description}
\end{definition}

In the present finite setting, the following facts are rather obvious (assertion (iii) following from the above discussion):

\begin{description}
\item[(i)] The support of each primal optimizer $\hpi$ is cyclically $c$-monotone.
\item[(ii)] Every $\pi\in\Pi(\mu,\nu)$ which is supported by a cyclically $c$-monotone set $\Gamma$, is a primal
optimizer.
\item[(iii)] A set $\Gamma\subseteq X\times Y$ is cyclically $c$-monotone iff it is strongly cyclically $c$-monotone.
\end{description}

In general, one may ask, for a given Monge--Kantorivich transport optimization problem, defined on polish spaces $X,Y$, equipped with Borel probability measures
$\mu,\nu$, and a Borel measurable cost function $c:X\times Y\to [0,\i]$, the following natural questions:\\

{\bf (P)} Does there exist a primal optimizer to (1), i.e.~a Borel measure  $\hpi \in\Pi(\mu,\nu)$ with marginals $\mu,\nu$, such that
\begin{align*}\nonumber
\int\limits_{X\times Y}{c} \ d\widehat\pi=\inf\limits_{\pi\in\Pi(\mu,\nu)} \int\limits_{X\times Y} c \ d\pi=:P
\end{align*}
holds true?\\

{\bf (D)} Do there exist dual optimizers to \eqref{FiniteDual}, i.e.~Borel functions $(\hphi,\hpsi)$ in $\Psi(\mu,\nu)$ such that
\begin{align}\label{H4}
\int\limits_X\hphi \ d\mu + \int\limits_Y\hpsi \ d\nu =\sup\limits_{(\varphi,\psi)\in\Psi(\mu,\nu)} \left(\int\limits_X \varphi \ d\mu +\int\limits_Y\psi \ d\nu
\right) =:D,
\end{align}
where $\Psi(\mu,\nu)$ denotes the set of all pairs of $[-\i ,+\i [$-valued integrable Borel functions $(\phi,\psi)$ on $X,Y$ such that $\varphi(x) +\psi(y)
\le c(x,y)$, for all $(x,y)\in X\times Y$? \\

{\bf (DG)} Is there a duality gap, or do we have $P=D$, as it should -- morally speaking -- hold true?\\

These are three natural questions which arise in every convex optimization problem. In addition, one may ask the following two questions pertaining to the
special features of the Monge--Kantorovich transport problem.\\

{\bf (CC)} Is every cyclically $c$-monotone transport plan $\pi\in\Pi(\mu,\nu)$ optimal, where we call $\pi\in\Pi(\mu,\nu)$ cyclically $c$-monotone if there is
a Borel subset $\Gamma \subseteq X\times Y$ of full support $\pi(\Gamma)=1$, verifying condition \eqref{P6}, for any $(x_1,y_1),\dots ,(x_n,y_n)\in\Gamma$?\\

{\bf (SCC)} Is every strongly cyclically $c$-monotone transport plan $\pi\in\Pi(\mu,\nu)$ optimal, where we call $\pi\in\Pi(\mu,\nu)$ strongly cyclically $c$-monotone
if there are Borel functions $\phi :X\to [-\i ,+\i [$ and $\psi :Y\to[-\i ,+\i [$, satisfying $\phi(x)+\psi(y)\le c(x,y)$, for all $(x,y)\in X\times Y$, and
$\pi\{\phi +\psi =c\}=1$?

\medskip

Much effort has been made over the past decades to provide
increasingly general answers to the questions above. We mention
the work of R\"uschendorf \cite{R96} who adapted the notion of
cyclical monotonicity from Rockafellar \cite{Rock66}.
Rockafellar's work pertains to the case $c (x,y)=-\langle
x,y\rangle$, for $x,y\in\R^n$, while R\"uschendorf's work pertains
to the present setting of general cost functions $c$, thus
arriving at the notion of cyclical $c$-monotonicity. Intimately
related is the notion of the $c$-conjugate $\phi^c$ of a function
$\phi$.

We also mention G.~Kellerer's fundamental work on the duality theory; in \cite{Kell84} he established that $P=D$ provided that $c:X\times Y\to [0,\infty]$ is lower semi-continous, or merely Borel-measurable and uniformly bounded.

The seminal paper \cite{GaMc96} proves (among many other results)
that we have a positive answer to question (CC) above in the
following situation: every cyclically $c$-monotone transport plan
is optimal provided that the cost function $c$ is continuous and
$X,Y$ are compact subsets of $\R^n$. In \cite[Problem
2.25]{Vill03} it is asked whether this extends to the case
$X=Y=\R^n$ with the squared euclidian distance as cost function.
This  was answered independently in \cite{Prat07} and
\cite{ScTe08}: the answer to (CC) is positive for general polish
spaces $X$ and $Y$, provided that the cost function $c:X\times
Y\to [0,\infty]$ is continuous (\cite{Prat07}) or lower
semi-continuous and finitely valued (\cite{ScTe08}). Indeed, in
the latter case, a transport plan is optimal  if and only  if it
is strongly $c$-monotone.


\bigskip

Let us briefly resume the state of the art pertaining to the five questions above.

As regards the most basic issue, namely (DG) pertaining to the question whether duality makes sense at all, this is analyzed in detail --- building on a
lot of previous literature --- in section 2 of the
accompanying paper \cite{BeLS09a}: it is shown there that, for a properly relaxed version of the primal problem, question (DG) has an affirmative answer in a
perfectly general setting, i.e.~for arbitrary Borel-measurable cost functions $c: X\times Y\to [0,\i ]$ defined on the product of two  polish spaces $X,Y$, equipped
with Borel probability measures $\mu,\nu$.

\medskip

As regards question (P) we find the following situation: if the cost function $c: X\times Y\to [0,\i ]$ is {\it lower semi-continuous}, the answer to question (P)
is always positive. Indeed, for an optimizing sequence $(\pi_n)^\i_{n=1}$ in $\Pi(\mu,\nu)$, one may apply Prokhorov's theorem to find a weak limit
$\hpi =\lim_{k\to\i} \pi_{n_k}$. If $c$ is lower semi-continuous, we get
\begin{align*}
\int\limits_{X\times Y} c \ d\hpi \le \lim\limits_{k\to\i} \int\limits_{X\times Y} c \ d\pi_{n_k},
\end{align*}
which yields the optimality of $\hpi$.

On the other hand, if $c$ fails to be lower semi-continuous, there is little reason why a primal optimizer should exist (see, e.g., \cite[Example 2.20]{Kell84}).

\medskip

As regards (D), the question of the existence of a dual optimizer is more delicate than for the primal case (P): it was shown in \cite[Theorem 3.2]{AmPr03} that, for
$c: X\times Y\to \R_+$, satisfying a certain moment condition, one may assert the existence of {\it integrable} optimizers $(\hphi ,\hpsi)$. However, if one drops
this moment condition, there is little reason why, for an optimizing sequence $(\phi_n ,\psi_n)^\i_{n=1}$ in (D) above, the $L^1$-norms should remain bounded.
Hence there is little reason why one should be able to find {\it integrable} optimizers $(\hphi ,\hpsi)$ as shown by easy examples (e.g.\ \cite[Examples 4.4, 4.5]{BeSc08}), arising
in rather regular situations.

Yet one would like to be able to pass to {\it some kind of limit $(\hphi ,\hpsi)$}, whether these functions are integrable or not. In the case when $\hphi$ and/or
$\hpsi$ fail to be integrable, special care then has to be taken to give a proper sense to \eqref{H4}.

This situation was the motivation for the introduction of the notion of {\it strong cyclical $c$-monotonicity} in \cite{ScTe08}:
this notion (see (SCC) above) characterizes the optimality of a given $\pi\in\Pi(\mu,\nu)$ in terms of a ``complementary slackness condition'', involving some $(\phi ,\psi)\in\Psi
(\mu,\nu)$, playing the role of a dual optimizer $(\hphi,\hpsi)$. The crucial feature is that {\it we do not need any integrability of the functions
$\phi$ and $\psi$} for this notion to make sense. It was shown in \cite{BeSc08} that, also in situations where there are no integrable optimizers $(\hphi,\hpsi)$,
one may find Borel measurables functions $(\phi,\psi)$, taking their roles in the setting of (SCC) above.

This theme was further developed in \cite{BeSc08}, where it was shown that, for $\mu\otimes\nu$-a.s.~finite, Borel measurable $c:X\times Y\to [0,\i ]$, one may
find Borel functions $\hphi :X\to [-\i ,+\i )$ and $\hpsi :Y\to [-\i ,\i ),$ which are dual optimizers if we interpret \eqref{H4} properly: instead of considering
\begin{align}\label{H11}
\int\limits_X \hphi \ d\mu +\int\limits_Y \hpsi \, d\nu,
\end{align}
which needs integrability of $\hpsi$ and $\hpsi$ in order to make sense, we consider
\begin{align}\label{H12}
\int\limits_{X\times Y} (\hphi (x) +\hpsi (y))\,  d\pi (x,y),
\end{align}
where the transport plan $\pi\in\Pi(\mu,\nu)$ is assumed to have finite transport cost $\int_{X\times Y} c(x,y) d\pi (x,y) <\i$. If \eqref{H11} makes
sense, then its value coincides with the value of \eqref{H12}; the crucial feature is that, \eqref{H12} also makes sense in cases when \eqref{H11} does not make
sense any more as shown in \cite[Lemma 1.1]{BeSc08}. In particular, the value of \eqref{H12} does not depend on the choice of the transport plan $\pi\in\Pi(\mu,\nu)$,
provided $\pi$ has finite transport cost $\int_{X\times Y} c(x,y) d\pi(x,y) <\i$.

\bigskip

Summing up the preceding discussion on the existence (D) of a dual optimizer $(\hphi,\hpsi)$: this question has a -- properly interpreted -- positive answer
provided that the cost function $c:X\times Y \to [0,\i]$ is $\mu\otimes\nu$-a.s.~finite (\cite[Theorem 2]{BeSc08}).

But things become much more complicated if we pass to cost functions
$c:X\times Y \to [0,\i]$ assuming the value $+\i$ on possibly ``large'' subsets of $X\times Y$.

In \cite[Example  4.1]{BeLS09a} we exhibit an example, which is a variant of an example due to G.~Ambrosio and A.~Pratelli \cite[Example 3.5]{AmPr03}, of a lower semicontinuous cost function
$c:[0,1)\times [0,1)\to[0,\i]$, where $(X,\mu)=(Y,\nu)$ equals $[0,1)$ equipped with Lebesgue measure, for which there are no {\it Borel measurable} functions
$\hphi,\hpsi$ verifying $\hphi(x)+\hpsi(y)\le c(x,y)$, maximizing \eqref{H12} above.

In this example, the cost function $c$ equals the value $+\i$ on
``many'' points of $X\times Y=[0,1)\times [0,1).$
In fact, for each $x\in[0,1[$, there are precisely two points $y_1 ,y_2 \in[0,1[$ such that $c(x,y_1) <\i$ and $c(x,y_2)<\i$, while for all other
$y\in[0,1[$, we have $c(x,y)=\i$. In addition, there is an optimal transport plan $\hpi\in\Pi(\mu,\nu)$ whose support equals the set $\{(x,y)\in[0,1)\times [0,1):
c(x,y) <\i\}.$

In this example one may observe the following phenomenon: while there {\it do not} exist Borel measurable functions $\hphi :[0,1)\to [-\i ,+\i)$ and
$\hpsi :[0,1)\to[-\i, \i)$ such that $\hphi (x)+\hpsi(y)=c(x,y)$ on $\{c(x,y)<\i\}$, there {\it does} exist a Borel function $\hh:[0,1)\times [0,1)
\to[-\i,\i)$ such that $\hh(x,y)=c(x,y)$ on $\{c(x,y)<\i\}$ and such that $\hh(x,y)=\lim_{n\to\i} (\phi_n (x)+\psi_n(y))$ where $(\phi_n,\psi_n)^\i_{n=1}$
are properly chosen, bounded Borel functions. The point is that the limit holds true (only) in the norm of $L^1([0,1[\times [0,1[, \hpi)$ as well as $\hpi$-a.s.

In other words, in this example we are able to identify some kind of dual optimizer $\hh\in L^1 ([0,1)\times [0,1),\hpi)$ which, however, is not of the
form $\hh(x,y)=\hphi(x)+\hpsi(y)$ for some Borel functions $(\hphi,\hpsi)$, but only a $\hpi$-a.s.~limit of such functions $(\phi_n(x)+\psi_n(y))^\i_{n=1}$.

In \cite[Theorem 4.2]{BeLS09a} we established a result which shows
that much of the positive aspect of this phenomenon, i.e.~the
existence of an optimal $\hh
\in L^1(\hpi)$,
encountered in the context of the above example, can be carried over to a general setting.
For the convenience of the reader we restate this theorem and the notations required to   formulate it.

Fix a finite transport plan $\pi_0\in \Pi(\mu,\nu,c) := \left\{
\pi\in\Pi(\mu,\nu)  : \int_{X\times Y} c\,d\pi <\infty \right\}$.
We denote by $\Pi^{(\pi_0)}(\mu,\nu)$ the set of elements
$\pi\in\Pi(\mu,\nu)$ such that $\pi \ll \pi_0$ and $\big\|
\frac{d\pi}{d\pi_0} \big\|_{L^\infty(\pi_0)}<\infty$. Note that
$\Pi^{(\pi_0)}(\mu,\nu)=\Pi(\mu,\nu)\cap L^\infty(\pi_0) \subseteq
\Pi(\mu,\nu,c)$. We shall replace the usual Kantorovich
optimization problem over the set $\Pi(\mu,\nu,c)$
 by the optimization over the smaller
set $\Pi^{(\pi_0)}(\mu,\nu)$. Its value is
\begin{align}
 \label{Walters7}
 P^{(\pi_0)}
 &= \inf \{ \langle c,\pi\rangle =\textstyle{\int} c\, d\pi  : \pi\in\Pi^{(\pi_0)}(\mu,\nu)\}.
\end{align}
As regards the dual problem, we define, for $\eps>0$,
\begin{eqnarray}
    D^{(\pi_0,\eps)}
    &=& \sup\Big\{\int \phi\,d\mu+\int\psi\,d\nu:\  \phi\in L^1(\mu),\psi\in
    L^1(\nu),\nonumber\\
     &&\qquad\qquad\qquad    \int_{X\times Y} (\phi(x)+\psi(y) -c(x,y))_+\,d\pi_0\le\eps
        \Big\}\quad \textrm{and}\nonumber\\
    \label{Walters7D}
    D^{(\pi_0)}&=&\lim_{\eps\to 0}D^{(\pi_0,\eps)}.
\end{eqnarray}
Define the ``summing'' map $S$ by
\begin{align*}
S:  L^1(X,\mu) \times L^1(Y,\nu) &\to L^1 (X\times Y,\pi_0)\\
(\varphi,\psi) &\mapsto \varphi\oplus\psi,
\end{align*}
where $\phi\oplus \psi$ denotes the function $\phi(x)+\psi(y)$ on $X\times Y$.
Denote by $L_S^1(X\times Y,\pi_0)$ the $\|.\|_1$-closed linear subspace of $L^1(X\times Y,\pi_0)$ spanned by
$S(L^1(X,\mu)\times L^1(Y,\nu))$. Clearly $L_S^1(X\times Y,\pi_0)$ is a Banach space under the norm $\|.\|_1$
induced by $L^1(X\times Y,\pi_0)$.

We shall also need the bi-dual $L_S^1(X\times Y,\pi_0)^{**}$ which
may be identified with a subspace of $L^1(X\times Y,\pi_0)^{**}$.
In particular, an element $h\in L_S^1(X\times Y,\pi_0)^{**}$ can
be decomposed into $h=h^r+h^s,$ where $h^r\in L^1(X\times Y,\pi_0)
$ is the regular part of the finitely additive measure $h$ and
$h^s $ its purely singular part.

\begin{theorem}\label{LeonardDuality}
Let $c: X\times Y \to [0,\infty]$ be Borel measurable, and let
$\pi_0\in\Pi(\mu,\nu,c)$ be a finite transport plan.
We have
\begin{align}\label{NiceEq}
P^{(\pi_0)}=D^{(\pi_0)}.
\end{align}
There is an element $\hat h\in L_S^1(X\times Y,\pi_0)^{**}$ such
that $\hat h\leq c$ 
and
$$D^{(\pi_0)}=\langle \hat h, \pi_0\rangle.$$
If $  \pi \in \Pi^{(\pi_0)}(\mu,\nu)$ (identifying $
\pi$ with $\frac{d  \pi}{d\pi_0}$) satisfies $\int c\, d \pi \le
P^{(\pi_0)} +\alpha$ for some $\alpha \geq 0$, then
\begin{equation}\label{eq-1}
    |\langle \hat h^s,   \pi\rangle |\leq \alpha.
\end{equation}
In particular, if $  \pi$ is an optimizer of \eqref{Walters7},
then $\hat h^s $ vanishes on the set $\{\frac{d \pi}{d\pi_0}>
0\}$.
\\
In addition, we may find a sequence of elements $(\varphi_n,\psi_n)\in
L^1(\mu)\times L^1(\nu)$  such that
\begin{align*}
\varphi_n\oplus\psi_n\to \hat h^r,\ \pi_0\mbox{-a.s.},\qquad
\|(\varphi_n\oplus\psi_n-\hat h^r)_+\|_{L_1(\pi_0)}\to 0\
 \end{align*}
 and
\begin{align}\label{Walters9}
\lim_{\delta\to0}\sup_{A\subseteq X\times Y,\pi_0(A)<\delta}\lim_{n\to \infty}
-\langle (\varphi_n\oplus\psi_n)\I_A,\pi_0\rangle = \|\hat h^s\|_{L_1(\pi_0)^{**}}.
\end{align}
\end{theorem}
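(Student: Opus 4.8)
The plan is to carry the Fenchel perturbation argument of Section~\ref{s2} over to the present non‑reflexive infinite‑dimensional situation, with the separable Banach space $W:=L^1_S(X\times Y,\pi_0)$ in the role played there by the finite‑dimensional space $E$. The one genuinely new feature is that the relevant convex function need not have a subgradient in the dual $W^*$, so the optimiser must be sought in the bidual $W^{**}$; this is precisely where the purely singular component $\hat h^s$ comes from. I would proceed in four steps: (1) the equality \eqref{NiceEq}; (2) the construction of $\hat h$ with $\hat h\le c$ and $\langle\hat h,\pi_0\rangle=D^{(\pi_0)}$; (3) the estimate \eqref{eq-1}; (4) the approximating sequence and \eqref{Walters9}.

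\emph{Step 1 (no duality gap).} Weak duality $D^{(\pi_0)}\le P^{(\pi_0)}$ is immediate: for $\pi\in\Pi^{(\pi_0)}(\mu,\nu)$ and $(\varphi,\psi)$ with $\|(\varphi\oplus\psi-c)_+\|_{L^1(\pi_0)}\le\eps$ one has $\int\varphi\,d\mu+\int\psi\,d\nu=\langle\varphi\oplus\psi,\pi\rangle\le\langle c,\pi\rangle+\|\tfrac{d\pi}{d\pi_0}\|_\infty\,\eps$, so $D^{(\pi_0,\eps)}\le\langle c,\pi\rangle+\|\tfrac{d\pi}{d\pi_0}\|_\infty\,\eps$ and $\eps\to0$ gives $D^{(\pi_0)}\le\langle c,\pi\rangle$. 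For the converse I would, for each level $\lambda\ge1$, apply Fenchel--Rockafellar duality in the pairing $\langle L^\infty(\pi_0),L^1(\pi_0)\rangle$ to the weak‑$*$ lower semicontinuous functions $\Phi_1(f):=\langle c,f\rangle+\chi_{\{0\le f\le\lambda\}}(f)$ and $\Phi_2(f):=\chi_{1+W^\perp}(f)$, where $W^\perp\subseteq L^\infty(\pi_0)$ is the annihilator of $W$: here $\inf_f(\Phi_1+\Phi_2)=P^{(\pi_0)}_\lambda:=\inf\{\langle c,\pi\rangle:\pi\in\Pi(\mu,\nu),\ \|\tfrac{d\pi}{d\pi_0}\|_\infty\le\lambda\}$, the point $f\equiv1$ is interior to $\{0\le f\le\lambda\}$ and lies in $1+W^\perp$ (the constraint qualification), and a computation of conjugates ($\Phi_1^*(g)=\lambda\|(g-c)_+\|_{L^1(\pi_0)}$, $\mathrm{dom}\,\Phi_2^*=W={}^\perp(W^\perp)$) yields $P^{(\pi_0)}_\lambda=\sup_{g\in W}\big(\langle g,\pi_0\rangle-\lambda\|(g-c)_+\|_{L^1(\pi_0)}\big)$, where by density the supremum may be taken over $g=\varphi\oplus\psi$. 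A further, routine Lagrangian duality for the (linear) maximization defining $D^{(\pi_0,\eps)}$ — strictly feasible at a large negative constant — gives $D^{(\pi_0,\eps)}=\inf_{\lambda\ge0}\big(\lambda\eps+P^{(\pi_0)}_\lambda\big)=\inf_{\lambda\ge1}\big(\lambda\eps+P^{(\pi_0)}_\lambda\big)\ge\inf_{\lambda\ge1}P^{(\pi_0)}_\lambda=P^{(\pi_0)}$, so $\eps\to0$ yields $D^{(\pi_0)}\ge P^{(\pi_0)}$ and hence \eqref{NiceEq}.

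\emph{Steps 2--3 (the optimiser and the singular estimate).} Choose $\eps_n\downarrow0$ and, using that $D^{(\pi_0,\eps_n)}$ is a supremum, $(\varphi_n,\psi_n)\in L^1(\mu)\times L^1(\nu)$ with $\|(\varphi_n\oplus\psi_n-c)_+\|_{L^1(\pi_0)}\le\eps_n$ and $\langle\varphi_n\oplus\psi_n,\pi_0\rangle\ge D^{(\pi_0,\eps_n)}-\eps_n$; by Step~1, $g_n:=\varphi_n\oplus\psi_n$ satisfies $\langle g_n,\pi_0\rangle\to D^{(\pi_0)}=P^{(\pi_0)}$. Since $(g_n)_+\le c+(g_n-c)_+$ and $\langle c,\pi_0\rangle<\infty$, the positive parts are bounded in $L^1(\pi_0)$, and then the convergence of $\langle g_n,\pi_0\rangle$ bounds the negative parts as well, so $(g_n)$ is norm‑bounded in $W$. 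As $W$ is separable, bounded sets of $W^{**}$ are weak‑$*$ metrisable, so after passing to a subsequence $g_n\to\hat h$ weak‑$*$ in $W^{**}\subseteq L^1(\pi_0)^{**}$. Testing against the weak‑$*$ continuous functional $\pi_0$ gives $\langle\hat h,\pi_0\rangle=D^{(\pi_0)}$; moreover each $g_n\wedge c\le c$ and $g_n\wedge c=g_n-(g_n-c)_+\to\hat h$ weak‑$*$ in $L^1(\pi_0)^{**}$ (the limit is unchanged since $\|(g_n-c)_+\|_{L^1(\pi_0)}\to0$), so $\hat h\le c$, equivalently $\hat h^r\le c$ $\pi_0$‑a.s.\ and $\hat h^s\le0$. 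For \eqref{eq-1}, let $\pi=f\pi_0\in\Pi^{(\pi_0)}(\mu,\nu)$ with $\langle c,\pi\rangle\le P^{(\pi_0)}+\alpha$. Testing $\hat h$ against $f\in L^\infty(\pi_0)$ and using that $f\pi_0$ has marginals $\mu,\nu$ gives $\langle\hat h,f\rangle=\lim_n\langle g_n,f\pi_0\rangle=\lim_n\big(\int\varphi_n\,d\mu+\int\psi_n\,d\nu\big)=D^{(\pi_0)}$; since $\hat h^r\le c$ $\pi_0$‑a.s.\ and $f\ge0$, one gets $\langle\hat h^s,\pi\rangle=D^{(\pi_0)}-\int\hat h^r f\,d\pi_0\ge D^{(\pi_0)}-\langle c,\pi\rangle\ge-\alpha$, while $\langle\hat h^s,\pi\rangle\le0$ because $\hat h^s\le0$; hence $|\langle\hat h^s,\pi\rangle|\le\alpha$, and taking $\alpha=0$ together with $\hat h^s\le0$ gives the asserted vanishing of $\hat h^s$ on $\{f>0\}$.

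\emph{Step 4 and the main obstacle.} To obtain the sequence of \eqref{Walters9} I would refine $(g_n)$ via Koml\'os' theorem: along a subsequence the Ces\`aro means $\tilde g_m:=\tfrac1m\sum_{k=1}^m g_{n_k}$, which stay of the form $\tilde\varphi_m\oplus\tilde\psi_m$, converge $\pi_0$‑a.s.\ to some $\ell\in L^1(\pi_0)$, while still $\|(\tilde g_m-c)_+\|_{L^1(\pi_0)}\to0$ and $\tilde g_m\to\hat h$ weak‑$*$. Splitting $(\tilde g_m-\ell)$ into positive and negative parts and applying Egorov's theorem to each, both have purely finitely additive weak‑$*$ limits, so $\hat h-\ell$ is purely finitely additive, i.e.\ $\ell=\hat h^r$; moreover $(\tilde g_m)_+\le c+(\tilde g_m-c)_+$ is uniformly integrable, whence $(\tilde g_m-\hat h^r)_+\to0$ a.s.\ and therefore in $L^1(\pi_0)$. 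Finally, for each $A$ weak‑$*$ convergence gives $\lim_m\big(-\langle\tilde g_m\I_A,\pi_0\rangle\big)=-\int_A\hat h^r\,d\pi_0+\langle-\hat h^s,\I_A\rangle$; passing to $\sup_{\pi_0(A)<\delta}$ and then $\delta\to0$ annihilates the first term by absolute continuity, and since the positive charge $-\hat h^s$ is purely finitely additive one has $\sup_{\pi_0(A)<\delta}\langle-\hat h^s,\I_A\rangle=\|\hat h^s\|_{L^1(\pi_0)^{**}}$ for every $\delta>0$, which is exactly \eqref{Walters9}. I expect Step~4 to be the hardest part: it forces one to run the Koml\'os refinement, the identification $\ell=\hat h^r$, the uniform‑integrability argument and the structure theory of purely finitely additive charges simultaneously on a single sequence $(\tilde\varphi_m,\tilde\psi_m)$; the Fenchel--Rockafellar bookkeeping of Step~1 is the other delicate point, chiefly in keeping straight the supremum over $W$ versus the maximum over $W^{**}$.
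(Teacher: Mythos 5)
The paper you are working from does not prove Theorem~\ref{LeonardDuality}: the theorem is restated verbatim from \cite[Theorem 4.2]{BeLS09a} ``for the convenience of the reader'', and the paper proceeds directly to the example illustrating it, without a proof. So there is no in-paper argument to compare your proposal against; what follows is a review of your plan on its own merits.

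The general architecture of your proposal --- a truncated Fenchel perturbation in place of the finite-dimensional subdifferential argument, extraction of a weak-$*$ limit in the bidual of $L^1_S$, identification of the regular part by a Koml\'os/Egorov argument, and the Yosida--Hewitt characterisation of pure finite additivity for \eqref{Walters9} --- is a sensible transplant of Section~\ref{s2} into the non-reflexive setting and almost certainly resembles what a correct proof must look like. There are, however, two concrete gaps that, as written, would make the argument fail.

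First, in Step~1 the invocation of Fenchel--Rockafellar duality is not in order. You pose the primal problem on $L^\infty(\pi_0)$ with its weak-$*$ topology, so that the dual variable lives in $L^1(\pi_0)$, and then check the constraint qualification by observing that $f\equiv 1$ is interior to $\{0\le f\le\lambda\}$. But that interiority is in the $\|\cdot\|_\infty$-topology (and requires $\lambda>1$, not merely $\lambda\ge 1$); for the $\sigma(L^\infty,L^1)$-topology, $\{0\le f\le\lambda\}$ has empty interior, so the locally-convex version of Fenchel--Rockafellar does not apply. If instead you apply the Banach-space version (norm topology on $L^\infty$), the dual optimiser a priori lives in $(L^\infty(\pi_0))^*=\text{ba}(\pi_0)$, not in $L^1(\pi_0)$; your identification of $\mathrm{dom}\,\Phi_2^*$ with $W={}^\perp(W^\perp)$ is then wrong, because the preannihilator has to be taken in $(L^\infty)^*$, where ${}^\perp(W^\perp)\supsetneq W$. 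Either you must add an argument that the purely finitely additive component of the dual optimiser can be discarded without loss, or you should replace the Fenchel mechanism by a minimax theorem: with $L(f,g)=\langle c,f\rangle+\chi_{[0,\lambda]}(f)-\langle f-1,g\rangle$ the set $\{0\le f\le\lambda\}$ is $\sigma(L^\infty,L^1)$-compact and $L$ is lsc-convex in $f$ and affine in $g$, so Sion's theorem gives $\inf_f\sup_{g\in W}L=\sup_{g\in W}\inf_f L$, which is exactly the identity you want, with no constraint qualification to verify.

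Second, in Steps~2--4 you pass to a subsequence using the claim ``$W$ is separable, so bounded sets of $W^{**}$ are weak-$*$ metrisable.'' Metrisability of bounded sets of $W^{**}$ for $\sigma(W^{**},W^*)$ requires separability of $W^*$, not of $W$; and $W^*$, being a quotient of $L^\infty(\pi_0)$, is in general not separable. So no sequentially weak-$*$ convergent subsequence can be extracted this way, and $\hat h$ should be taken merely as a $\sigma(L^1(\pi_0)^{**},L^\infty(\pi_0))$-cluster point (as the accompanying text of the paper itself does in item~$(\delta)$). This is repairable, but it propagates through the argument: the Koml\'os step should come first, producing Ces\`aro means $\tilde g_m=\tilde\varphi_m\oplus\tilde\psi_m$ that converge $\pi_0$-a.s.\ to some $\ell\in L^1(\pi_0)$; then choose $\hat h$ to be a weak-$*$ cluster point of $(\tilde g_m)$, so that the Egorov/Yosida--Hewitt argument applies along the defining subnet and yields $\hat h^r=\ell$. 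You should also be explicit, in \eqref{Walters9}, about why the inner $\lim_{n\to\infty}$ exists for every measurable $A$; with $\hat h$ only a cluster point this requires either an interpretation as $\limsup$/$\liminf$ or a further diagonal selection adapted to the sets $A$ used in the $\sup$. With these two repairs the overall plan is sound.
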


The assertion of the theorem extends the phenomenon of \cite[Example  4.1]{BeLS09a} to a general setting. There is, however, one additional complication, as compared
to the situation of this specific example: in the above theorem we only can assert that we find the optimizer $\hh$ in $L^1(\hpi)^{**}$ rather than
in $L^1(\hpi)$. The question arises whether this complication is indeed unavoidable. The purpose of the subsequent section is to construct an example showing
that the phenomenon of a non-vanishing singular part $\hh^s$ of $\hh=\hh^r + \hh^s$ may indeed arise in the above setting. In
addition, the example gives a good illustration of the subtleties of the situation described by the theorem above.

\section{The singular part of the dual optimizer}\label{SingPart}

In this section we refine the construction of Examples 4.1 and 4.3 in \cite{BeLS09a} (which in turn are variants of an example due to G.~Ambrosio and
A.~Pratelli \cite[Example 3.2]{AmPr03}). We assume that the reader is familiar with these examples and freely use the notation from this paper.
\\
In particular, for an irrational $\alpha\in[0,1)$ we write, for $k\in\Z,\footnote{In \cite{BeLS09a} the constructions are carried out for $\N$ instead of $\Z$, but for our purposes the latter choice turns out to be better suited.}$
\begin{align}\label{BB1}
\begin{split}
\varrho_k(x)=1  + \# \{0\le i<k:x\oplus i\alpha\in [0,\tfrac12 )\} \\
- ~ \# \{0\le i<k: x\oplus i\alpha\in [\tfrac12 ,1)\}, \hspace{-0.1cm}
\end{split}
\end{align}
where, for $k<0$, we mean by $0\le i<k$ the set $\{k+1,k+2,\ldots ,0\}$ and $\oplus$ denotes addition modulo 1. We also recall that the function $h:[0,1)\times[0,1)\to\Z$ is defined in \cite[Example  4.3]{BeLS09a} as
\begin{align}\label{dxy}
h(x,y)=
\begin{cases}
\varrho_k(x), & k\in\Z \ \mbox{and} \ y=x\oplus k\alpha \\
\i, & \mbox{otherwise.}
\end{cases}
\end{align}
In  \cite[Example  4.3]{BeLS09a} we considered the $[0,\i]$-valued cost function $c(x,y):= h_+(x,y),$ the positive part of the function $h$.
We now construct an example restricting $h_+(x,y)$ to a certain subset of $[0,1)\times [0,1).$
\begin{example}
{\it Consider $X=Y=[0,1)$ and denote by $\mu$ resp.\ $\nu$ the Lebesgue measure on  $X$, resp.\ $Y$.
There is an irrational $\alpha\in [0,1)$ and a map $\tau :[0,1)\to\Z$ such that, for
\begin{align*}
&\Gamma_0=\{(x,x),x\in [0,1)\}, \\
&\Gamma_1=\{(x,x\oplus \alpha):x\in [0,1)\}, \\
&\Gamma_\tau=\{(x,x\oplus \tau(x)\alpha):x\in [0,1)\}
\end{align*}
and letting
\begin{align*}
c(x,y)=
\begin{cases}
h_+(x,y),  & \mbox{for} \ x\in \Gamma_0 \cup \Gamma_1 \cup \Gamma_\tau \\
\i , & \mbox{otherwise}
\end{cases}
\end{align*}
the following properties are satisfied.
\begin{enumerate}
\item
The maps
\begin{align*}
T^0_\alpha (x)=x, \qquad T^1_\alpha (x) =x\oplus \alpha, \qquad
T^{(\tau)}_\alpha (x)=x\oplus (\tau(x)\, \alpha)
\end{align*}
are measure preserving bijections from $[0,1)$ to $[0,1)$ with respect to the Lebesgue measure ($\mu$ in the present setting).
Denote by $\pi_0,\pi_1,\pi_\tau$ the corresponding transport plans in $\Pi(\mu,\mu)$, i.e.
\begin{align*}
\pi_0=(id, id)_\# \mu, \quad \pi_1=(id, T_\alpha)_\#\mu, \quad
\pi_\tau=(id, T^{(\tau)}_\alpha)_\#\mu,
\end{align*}
and let $\pi=(\pi_0+\pi_1+\pi_\tau)/ 3.$
\item The transport plans $\pi_0$ and $\pi_1$ are optimal while $\pi_\tau$ is not. In fact, we have
\begin{align}\label{Seite10}
\langle c,\pi_0\rangle=\langle c,\pi_1\rangle=1 \ \mbox{while} \
\langle c,\pi_\tau\rangle \geq\langle h,\pi_\tau\rangle >1.
\end{align}
\item There is a sequence $(\varphi_n, \psi_n)_{n=1}^\infty$ of bounded Borel functions such that
\begin{align}
    & (a) \ \varphi_n(x)+\psi_n(y)\leq c(x,y), \quad \mbox{for }x\in X,y\in
    Y,\label{iii,a}\\
    & (b) \ \lim_{n\to\i} \Big(\int_X\phi_n(x) \ d\mu(x)+\int_Y\psi_n(y), d\nu(y)\Big)=1,
    \label{iii,b}\\
    & (c) \ \lim_{n\to\i}(\varphi_n(x)+\psi_n(y)) = h(x,y), \ \pi\mbox{-almost surely.}\label{iii,c}
\end{align}

\item Using the notation of  \cite[Theorem  4.2]{BeLS09a} we find
that for each dual optimizer $\hh\in L^1(\pi)^{**},$ which
decomposes as $\hh=\hh^r+\hh^s$ into its regular part $\hh^r \in
L^1 (\pi)$ and its purely singular part $\hh^s\in L^1 (\pi)^{**},$
we have
\begin{align}\label{tag4}
\hh^r=h, \ \pi\mbox{-a.s.,}
\end{align}
and the singular part $\hh^s$ satisfies
$\|\hh^s\|_{L^1(\pi)^{**}}=\langle h,\pi_\tau\rangle-1>0$. In
particular, the singular part $\hh^s$ of $\hh$ does not vanish.
The finitely additive measure $\hh^s$ is supported by
$\Gamma_{\tau}$, i.e.\ $\langle \hh^s, \I_{\Gamma_0}+\I_{\Gamma_1}
\rangle=0.$
\end{enumerate}
}
\end{example}

\medskip

We shall use a special irrational $\alpha \in [0,1)$, namely
$$\alpha =\sum_{j=1}^\infty \frac 1{M_j}, $$ where $M_j= m_1m_2\ldots m_j= M_{j-1}m_j,$ and $(m_j)_{j=1}^\infty$ is a sequence of prime numbers $m_j\geq 5$
tending sufficiently fast to infinity, to be specified below. We let
$$\alpha_n:= \sum_{j=1}^n\frac1{M_j}, $$ which, of course, is a rational number.

We will need the following lemma. We thank Leonhard Summerer for showing us the proof of Lemma \ref{RelPrimeLemma}.

\begin{lemma}\label{RelPrimeLemma}
It is possible to choose a sequence $m_1, m_2,\ldots$ of primes growing arbitrarily fast to infinity, such that with $M_1 =m_1, M_2=m_1\cdot m_2, \ldots, M_n=m_1 \cdots m_n,\ldots$ we have, for each
$n\in \N,$
$$\sum_{j=1}^n\frac1{M_j}=\frac{P_n}{M_n},$$ with  $P_n$ and $M_n$ relatively prime.
\end{lemma}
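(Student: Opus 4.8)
The plan is to build $\alpha=\sum_{j\ge 1}1/M_j$ inductively, choosing the primes $m_j$ one at a time, and to arrange the coprimality of $P_n$ and $M_n$ as a side condition that propagates. Write $P_n/M_n = \sum_{j=1}^n 1/M_j$ in lowest common denominator form, so that $P_n = \sum_{j=1}^n M_n/M_j = \sum_{j=1}^n m_{j+1}m_{j+2}\cdots m_n$ (with the $j=n$ term equal to $1$). The inductive hypothesis is that $\gcd(P_n, M_n)=1$; since $M_n = m_1\cdots m_n$ is squarefree, this is equivalent to saying that no prime $m_i$ (for $1\le i\le n$) divides $P_n$. The point is that, modulo $m_i$, every summand $m_{j+1}\cdots m_n$ with $j<i$ is divisible by $m_i$ and hence vanishes, so $P_n \equiv \sum_{j=i}^n m_{j+1}\cdots m_n \pmod{m_i}$; in particular the $j=n$ term contributes $1$, and so $P_n \bmod m_i$ depends only on $m_i,\dots,m_n$ but not on the earlier primes.

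The key step is then the induction step: given primes $m_1,\dots,m_n$ already chosen with $\gcd(P_n,M_n)=1$, I want to choose the next prime $m_{n+1}$ (as large as we please, to guarantee fast growth) so that $\gcd(P_{n+1},M_{n+1})=1$, i.e.\ so that no prime among $m_1,\dots,m_{n+1}$ divides $P_{n+1}$. Note the recursion $P_{n+1} = m_{n+1}P_n + 1$. For the old primes $m_i$ with $i\le n$: modulo $m_i$ we have $P_{n+1}\equiv m_{n+1}P_n + 1$, and since $m_i \nmid P_n$ by hypothesis, the congruence $m_{n+1}P_n + 1 \equiv 0 \pmod{m_i}$ determines a single residue class for $m_{n+1}$ modulo $m_i$ that we must \emph{avoid}; so each old prime forbids one residue class mod $m_i$. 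For the new prime itself: $P_{n+1} = m_{n+1}P_n + 1 \equiv 1 \pmod{m_{n+1}}$, so $m_{n+1}\nmid P_{n+1}$ automatically, for free. Hence I only need to dodge $n$ forbidden residue classes, one modulo each of $m_1,\dots,m_n$. By the Chinese Remainder Theorem these impose a congruence condition modulo $M_n$ that still leaves $\prod_{i=1}^n (m_i-1)\ge 1$ admissible residue classes modulo $M_n$ (here we use $m_i\ge 5$, in fact $m_i\ge 2$ suffices for nonemptiness). By Dirichlet's theorem on primes in arithmetic progressions, any admissible residue class that is coprime to $M_n$ — and at least one of the $\prod(m_i-1)$ admissible classes is automatically coprime to $M_n$, for instance one can first intersect with the class of $1$ mod $M_n$ provided $1$ is not itself forbidden, or argue directly since being coprime to $M_n$ is exactly the generic case — contains infinitely many primes, so we may pick $m_{n+1}$ prime, congruent to an allowed class, and larger than any prescribed bound. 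This completes the induction and the construction.

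The main obstacle I anticipate is a bookkeeping subtlety rather than a deep one: making sure that among the allowed residue classes modulo $M_n$ (those dodging the $n$ forbidden classes) there is at least one that is also coprime to $M_n$, so that Dirichlet applies. One clean way around this is to strengthen the induction: observe that $P_n \equiv 1 \pmod{m_i}$ would be one convenient allowed class, but more robustly, note that for each $i$ the forbidden class for $m_{n+1}$ mod $m_i$ is the unique solution of $m_{n+1}P_n\equiv -1$, which is nonzero mod $m_i$ (since if $m_{n+1}\equiv 0$ then $m_{n+1}P_n + 1\equiv 1\not\equiv 0$); hence the class $m_{n+1}\equiv 0\pmod{m_i}$ is never the forbidden one, but of course we cannot take $m_{n+1}$ divisible by $m_i$ since it must be prime and large — instead use that there are $m_i - 1 \ge 4$ nonzero classes and only one is forbidden, leaving at least $3$ nonzero allowed classes mod each $m_i$, and by CRT these combine to an allowed class mod $M_n$ consisting entirely of integers coprime to $M_n$. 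Then Dirichlet produces the desired prime $m_{n+1}$, as large as we wish, finishing the proof. Apart from this the argument is entirely elementary modular arithmetic plus one invocation of Dirichlet's theorem.
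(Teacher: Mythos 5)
Your proof is correct, and it takes a genuinely different route from the paper's. Both arguments share the same outer skeleton (induction on $n$, Chinese Remainder Theorem to combine residue conditions modulo the $m_i$, Dirichlet to find an arbitrarily large prime in the resulting class), but the core number-theoretic mechanism differs. The paper prescribes a specific congruence pattern up front — $m_{i+1}\equiv +1\ (m_i)$ and $m_{i+j}\equiv -1\ (m_i)$ for $j\ge 2$ — and then verifies coprimality of $P_n$ and $M_n$ directly by computing the telescoping alternating sum $m_{k+1}\cdots m_n+\dots+m_n+1$ modulo $m_k$ and checking it lands on $-1$ or $+2$, never on $0$. You instead exploit the recursion $P_{n+1}=m_{n+1}P_n+1$, which shows that at the inductive step only \emph{one} residue class of $m_{n+1}$ modulo each earlier prime $m_i$ is forbidden (namely $m_{n+1}\equiv -P_n^{-1}\ (m_i)$, which exists because $m_i\nmid P_n$ by hypothesis and which is automatically nonzero), while divisibility by $m_{n+1}$ itself is ruled out for free since $P_{n+1}\equiv 1\ (m_{n+1})$. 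This makes the argument more flexible and more clearly "generic": rather than finding and verifying one magic pattern, you show that almost any choice of $m_{n+1}$ works, and that there is always at least one allowed class modulo $M_n$ consisting of integers coprime to $M_n$, so Dirichlet applies. Your observation that the forbidden class is nonzero (so the allowed classes include at least $m_i-2\ge 3$ nonzero ones, and the CRT product is coprime to $M_n$) correctly closes the bookkeeping gap you anticipated. Both proofs are sound; yours is arguably cleaner in that it replaces the alternating-sum computation with the one-line recursion for $P_n$.
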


\begin{proof}{}
We have
$$\sum_{j=1}^n\frac1{M_j}= \frac{m_2 \ldots m_{n} + \ldots +m_{n}+1}{M_n}=:\frac{P_n}{M_n},$$
thus $P_n$ and $M_n$ are relatively prime, if and only if
\begin{align}\label{mFirst}
m_1& \nmid &  m_2 \cdots m_{n}+\ & m_3 \cdots m_{n} + \ldots  + && m_{n}+1&\\
m_2& \nmid   &                              & m_3 \cdots m_{n} + \ldots  + && m_{n}+1&\\
      & \vdots      & &  &&  \vdots & \\                  \label{mLast}
m_{n-1} & \nmid &    & &&  m_n+1.&
\end{align}
We claim that these conditions are, e.g., satisfied provided that we choose $m_1,m_2,\ldots$ such that $m_i \geq 3$ and
\begin{align}\label{KongOne}
m_{i+1}&\equiv +1 ~(m_i)\\
m_{i+j}&\equiv - 1 ~ (m_i) \mbox{ if $j\geq 2$}.\label{KongTwo}
\end{align}
for all $i\geq 1$. Indeed \eqref{KongOne}, \eqref{KongTwo} imply that for $k\in\{1,\ldots,n-1\}$ we have modulo $( m_k)$
\begin{align*}
m_{k+1}\cdots m_n &+& m_{k+2}\cdots m_n& +&m_{k+3}\cdots m_n&  +&\ldots&+& m_n & +&1 & \equiv\\
 (\pm   1)           &+&  (\pm 1)                  & +& (\mp1)&  +&\ldots&+& (-1) & +&(+1)  &,
\end{align*}
where in the second line the $(n-k+1)$ summands start to alternate after the second term. Thus, for even $n-k$, this amounts to
\begin{align*}
m_{k+1}\cdots m_n &+& m_{k+2}\cdots m_n& +&m_{k+3}\cdots m_n&  +&\ldots&+& m_n & +&1 & \equiv\\
 (-   1)           &+&  (-1)                  & +& (+1)&  +&\ldots&+& (-1) & +&(+1)  &\equiv -1,
\end{align*}
while we obtain, for odd $n-k$,
\begin{align*}
m_{k+1}\cdots m_n &+& m_{k+2}\cdots m_n& +&m_{k+3}\cdots m_n&  +&\ldots&+& m_n & +&1 & \equiv\\
 (+  1)           &+&  (+1)                  & +& (-1)&  +&\ldots&+& (-1) & +&(+1)  &\equiv +2
\end{align*}
Hence \eqref{mFirst}-\eqref{mLast} are satisfied as the $m_n$ where chosen such that $m_n > 2$.

We use induction to construct a sequence of primes  satisfying \eqref{KongOne} and \eqref{KongTwo}. Assume that $m_1,\ldots, m_i$ have been defined.
By the chinese remainder theorem  the system of congruences
$$ x\equiv -1 \ (m_1), \ldots,\quad x\equiv -1 \ (m_{i-1}),\quad x\equiv +1 \ (m_i)$$
has a solution $x_0 \in \{1, \ldots, m_1\ldots m_i\}$. By
Dirichlet's theorem, the arithmetic progression $x_0+k m_1\ldots
m_i, k\in\N$ contains infinitely many primes, so we may pick one
which is as large as we please. The induction continues.
   \end{proof}

For $\beta\in [0,1),$ denote by $T_{\beta}:[0,1)\to [0,1),
T_\beta(x):= x \oplus\beta$ the addition of $\beta$ modulo 1. With
this notation we have $T_{\alpha_n}^{M_n}=id$ and, by Lemma
\ref{RelPrimeLemma}, it is possible to choose $m_1,\ldots,m_n$ in
such a way that $M_n $ is the smallest such number in $\N$. Our
aim is to construct a function $\tau:[0,1)\to \Z$ such that the
map
\begin{equation*}
T_\alpha^{(\tau)}: \left\{
    \begin{array}{rcl}
      [0,1) & \to & [0,1) \\
      x & \mapsto & T_\alpha^{(\tau)} (x)
=T_\alpha^{\tau(x)}(x) \\
    \end{array}
    \right.,
 \end{equation*}
defines, up to a $\mu$-null set, a measure preserving bijection on
$[0,1)$, and such that the corresponding transport plan
$\pi_\tau\in \Pi(\mu,\nu)$, given by $\pi_\tau=(id,
T_\alpha^{(\tau)})_{\#}\mu$, has the properties listed above with
respect to the cost function $c(x,y)$ which is the restriction of
the function $h_+(x,y)$ to $\Gamma_0\cup\Gamma_1\cup\Gamma_\tau.$
We shall do so by an inductive procedure, defining bounded
$\Z$-valued functions $\tau_n$ on $[0,1)$ such that the maps
$T_{\alpha_n}^{(\tau_n)}$ are measure preserving bijections on
$[0,1)$. The map $T_\alpha ^{(\tau)}$ then will be the limit of
these $T_{\alpha_n}^{(\tau_n)}$.

\medskip\noindent{\bf Step n=1:} Fix a prime $M_1=m_1\geq 5$, so
that $\alpha_1= \frac1{M_1}$. Define
$$I_{k_1}:=\left[\tfrac{k_1-1}{M_1},\tfrac{k_1}{M_1}\right), \ k_1=1,\ldots, M_1,$$
so that $(I_{k_1})_{k_1=1}^{M_1}$ forms a partition of $[0,1)$  and $T_{\alpha_1}$ maps
$I_{k_1}$ to $ I_{k_1+ 1}$, with the convention  $I_{M_1 +1}=I_{1}$. We also introduce the notations
$$L^1:=[0, \tfrac12 - \tfrac{1}{2M_1}) \ \mbox{and} \
R^1:=[\tfrac12 + \tfrac{1}{2M_1},1)$$ for the segments left and right of the middle interval $$\IM^1:=I_{(M_1+1)/2}=[\tfrac12 - \tfrac1{2M_1},
\tfrac12 +\tfrac1{2M_1}).$$
We define the functions $\varphi^1, \psi^1$ on $[0,1)$ such that
$\varphi^1(x)+\psi^1(x)\equiv 1$ and
$$ \varphi^1(x)+\psi^1(T_{\alpha_1}(x))=
\begin{cases}
0& x \in L^1\\
1& x \in \IM^1 \\
2& x \in R^1\\
\end{cases}
$$
which leads to the relation
$$ \varphi^1(T_{\alpha_1}(x))=\varphi^1(x)+
\begin{cases}
1,& x \in L^1,\\
0,& x \in \IM^1, \\
-1,& x \in R^1. \\
\end{cases}
$$
Making the choice $\varphi^1\equiv 0$ on $I_{1}$ this leads to
\begin{align}\label{choice}
\varphi^1(x)&=\begin{cases}
k_1-1, & x \in I_{k_1}, k_1\in\{1,\ldots, (M_1+1)/2\},\\
M_{1}+1-k_1, & x \in I_{k_1}, k_1\in\{ (M_1+3)/2, M_1\},
\end{cases}\\
\psi^1(x)&=1-\varphi^1(x).\nonumber
\end{align}
The function $\varphi^1$ starts at $0$, increases until the middle interval, stays constant when stepping to the interval right of the middle, and then
decreases, reaching $1$ on the final interval $I_{M_1}$.

The idea is to define the map $\tau_1 : [0,1)\to \Z$ in such a way
that the map
\begin{equation*}
T_{\alpha_1}^{(\tau_1)}: \left\{
    \begin{array}{rcl}
      [0,1) & \to & [0,1) \\
      x & \mapsto &T_{\alpha_1}^{\tau_1(x)}(x) \\
    \end{array}
    \right.,
 \end{equation*}
 is a measure preserving bijection enjoying the following property: the map
    $$
x\mapsto \phi^1 (x) +\psi^1( T_{\alpha_1}^{(\tau_1)} (x)),
    $$
equals the value two on a large set while it has concentrated a negative mass which is close to $-1$ on a small set.
\\
This can be done, e.g., by shifting the first interval $I_1$ to
the interval $I_{(M_1 -1)/2}$, which is left of the middle one,
while  we shift the intervals $I_2,\dots ,I_{(M_1-1)/2}$ by one
interval to the left. On the right hand side of $[0,1)$ we proceed
symmetrically while the middle interval simply is not moved.

\par\medskip
\begin{center}
\par\vskip 1cm

\scalebox{1} 
{
\begin{pspicture}(0,-3.3767188)(11.61375,1.6667187)
\definecolor{color2341}{rgb}{0.0,0.8,0.0}
\definecolor{color2382}{rgb}{0.0,0.4,1.0}
\definecolor{color2392}{rgb}{1.0,0.2,0.4}
\definecolor{color3217}{rgb}{0.2,0.8,0.0}
\psbezier[linewidth=0.02,linecolor=color3217,arrowsize=0.093cm
2.0,arrowlength=1.4,arrowinset=0.4]{<-}(5.6409373,-0.84328127)(5.8373675,-0.64800173)(5.9609375,-0.38502038)(5.738854,-0.36415082)(5.516771,-0.34328124)(5.1409373,-0.41632473)(5.619271,-0.8232812)
\psline[linewidth=0.02cm,linestyle=dashed,dash=0.16cm
0.16cm](6.1409373,-0.86328125)(6.1609373,1.6367188)
\psline[linewidth=0.02cm,linestyle=dashed,dash=0.16cm
0.16cm](5.1609373,-0.84328127)(5.1609373,1.6567187)
\psarc[linewidth=0.02,linecolor=color2341,arrowsize=0.0512cm
2.0,arrowlength=2.08,arrowinset=0.47]{<-}(10.2109375,-1.0132812){0.47}{12.264773}{159.77515}
\psarc[linewidth=0.02,linecolor=color2341,arrowsize=0.0512cm
2.0,arrowlength=2.08,arrowinset=0.47]{<-}(9.2109375,-1.0132812){0.47}{12.264773}{159.77515}
\psarc[linewidth=0.02,linecolor=color2341,arrowsize=0.0512cm
2.0,arrowlength=2.08,arrowinset=0.47]{<-}(8.230938,-0.97328126){0.47}{12.264773}{159.77515}
\psarc[linewidth=0.02,linecolor=color2341,arrowsize=0.0512cm
2.0,arrowlength=2.08,arrowinset=0.47]{<-}(7.1709375,-0.97328126){0.47}{12.264773}{159.77515}
\psarc[linewidth=0.02,linecolor=color2341,arrowsize=0.0712cm
2.0,arrowlength=2.08,arrowinset=0.47]{<-}(8.690937,0.68671876){2.63}{-144.46233}{-36.060425}
\psarc[linewidth=0.02,linecolor=color2341,arrowsize=0.0512cm
3.0,arrowlength=2.08,arrowinset=0.47]{->}(4.1509376,-1.0132812){0.47}{25.016893}{159.77515}
\psarc[linewidth=0.02,linecolor=color2341,arrowsize=0.0512cm
3.0,arrowlength=2.08,arrowinset=0.47]{->}(3.1709375,-0.99328125){0.47}{25.016893}{159.77515}
\psarc[linewidth=0.02,linecolor=color2341,arrowsize=0.0512cm
3.0,arrowlength=2.08,arrowinset=0.47]{->}(2.1509376,-1.0132812){0.47}{25.016893}{159.77515}
\psarc[linewidth=0.02,linecolor=color2341,arrowsize=0.0512cm
3.0,arrowlength=2.08,arrowinset=0.47]{->}(1.1909375,-1.0132812){0.47}{25.016893}{159.77515}
\psarc[linewidth=0.02,linecolor=color2341,arrowsize=0.0712cm
3.0,arrowlength=2.08,arrowinset=0.47]{->}(2.6709375,0.70671874){2.63}{-144.46233}{-36.060436}
\psline[linewidth=0.04cm](0.1809375,-0.80328125)(11.180938,-0.86328125)
\psline[linewidth=0.04cm,linecolor=color2382](10.180938,-0.40328124)(11.180938,-0.40328124)
\psline[linewidth=0.04cm,linecolor=color2382](9.160937,0.19671875)(10.160937,0.19671875)
\psline[linewidth=0.04cm,linecolor=color2382](8.180938,0.65671873)(9.180938,0.65671873)
\psline[linewidth=0.04cm,linecolor=color2382](7.1609373,1.1767187)(8.160937,1.1767187)
\psline[linewidth=0.04cm,linecolor=color2382](5.1409373,1.6367188)(7.1809373,1.6367188)
\psline[linewidth=0.04cm,linecolor=color2382](4.1809373,1.1567187)(5.1809373,1.1567187)
\psline[linewidth=0.04cm,linecolor=color2382](3.1809375,0.63671875)(4.1809373,0.63671875)
\psline[linewidth=0.04cm,linecolor=color2382](2.1809375,0.13671875)(3.1809375,0.13671875)
\psline[linewidth=0.04cm,linecolor=color2382](1.1809375,-0.31325272)(2.1809375,-0.31325272)
\psline[linewidth=0.04cm,linecolor=color2382](0.1809375,-0.8232242)(1.1809375,-0.8232242)
\psline[linewidth=0.04cm,linecolor=color2392](5.1809373,-0.84328127)(6.1809373,-0.84328127)
\psdots[dotsize=0.198,dotstyle=|](0.1809375,-0.80328125)
\psdots[dotsize=0.198,dotstyle=|](11.180938,-0.84328127)
\psdots[dotsize=0.198,dotstyle=|](1.1609375,-0.8232812)
\psdots[dotsize=0.198,dotstyle=|](2.1409376,-0.8232812)
\psdots[dotsize=0.198,dotstyle=|](3.1609375,-0.86328125)
\psdots[dotsize=0.198,dotstyle=|](4.1609373,-0.86328125)
\psdots[dotsize=0.198,dotstyle=|](7.1609373,-0.8232812)
\psdots[dotsize=0.198,dotstyle=|](8.160937,-0.84328127)
\psdots[dotsize=0.198,dotstyle=|](9.180938,-0.86328125)
\psdots[dotsize=0.198,dotstyle=|](10.160937,-0.86328125)
\psline[linewidth=0.02cm,tbarsize=0.07055555cm
5.0,bracketlength=0.15]{[-}(0.1609375,-2.3032813)(1.9409375,-2.3232813)
\psline[linewidth=0.02cm,tbarsize=0.07055555cm
5.0,rbracketlength=0.15]{[-)}(5.1809373,-1.8232813)(6.1809373,-1.8432813)
\psdots[dotsize=0.198,dotstyle=|](5.1609373,-0.86328125)
\psdots[dotsize=0.198,dotstyle=|](6.1609373,-0.80328125)
\usefont{T1}{ptm}{m}{n}
\rput(2.5623438,-2.3132813){$L^1$}
\usefont{T1}{ptm}{m}{n}
\rput(8.632343,-2.3132813){$R^1$}
\usefont{T1}{ptm}{m}{n}
\rput(5.6584377,-1.5432812){\small $\IM^1$}
\usefont{T1}{ptm}{m}{n}
\rput(0.23234375,-1.2532812){$0$}
\usefont{T1}{ptm}{m}{n}
\rput(11.172344,-1.2732812){$1$}
\psline[linewidth=0.02cm,tbarsize=0.07055555cm
5.0,rbracketlength=0.15]{-)}(9.280937,-2.3232813)(11.160937,-2.3432813)
\psline[linewidth=0.02cm,tbarsize=0.07055555cm
5.0,bracketlength=0.15]{[-}(6.1809373,-2.3432813)(7.9409375,-2.3232813)
\psline[linewidth=0.02cm,tbarsize=0.07055555cm
5.0,rbracketlength=0.15]{-)}(3.2809374,-2.3432813)(5.1609373,-2.3432813)
\psline[linewidth=0.02cm,tbarsize=0.07055555cm
5.0,rbracketlength=0.15]{[-)}(1.2009375,-2.8032813)(2.2009375,-2.8232813)
\psline[linewidth=0.02cm,tbarsize=0.07055555cm
5.0,rbracketlength=0.15]{[-)}(10.160937,-2.7632813)(11.160937,-2.7832813)
\psline[linewidth=0.02cm,tbarsize=0.07055555cm
5.0,rbracketlength=0.15]{[-)}(4.2009373,-2.7832813)(5.2009373,-2.8032813)
\psline[linewidth=0.02cm,tbarsize=0.07055555cm
5.0,rbracketlength=0.15]{[-)}(0.1609375,-2.7832813)(1.1609375,-2.8032813)
\usefont{T1}{ptm}{m}{n}
\rput(0.6684375,-3.1232812){\small $I_1$}
\usefont{T1}{ptm}{m}{n}
\rput(1.6484375,-3.1632812){\small $I_2$}
\usefont{T1}{ptm}{m}{n}
\rput(4.6684375,-3.1032813){\small $I_{\frac{M_1-1}{2}}$}
\usefont{T1}{ptm}{m}{n}
\rput(10.768437,-3.0832813){\small $I_{M_1}$}
\end{pspicture}
}
    \vskip 0.5cm
\textit{Fig.\,1}.\quad Representations of  $\varphi^1$ and
$\tau^1.$
\end{center}

\noindent The step function is $\varphi^1$ and the arrows indicate
the action of $T_{\alpha_1}^{(\tau_1)}.$ This figure corresponds
to the value $M_1=11.$
\par\bigskip

More precisely, we set
\begin{align}\label{cal}
\tau_1(x)=
\begin{cases}
\tfrac{M_1-3}2,& x \in I_{1},\\
-1,& x \in I_{k_1}, k_1\in\{2,\ldots, (M_1-1)/2\},\\
0, & x\in I_{(M_1+1)/2},\\
1,& x \in I_{k_1}, k_1\in\{ (M_1+3)/2,\dots , M_1\},\\
-\tfrac{M_1-3}2, &x \in I_{M_1}.
\end{cases}
\end{align}
Then $T_{\alpha_1}^{(\tau_1)}$ induces a permutation of the intervals $(I_{k_1})^{M_1}_{k_1=1}$ and a short calculation shows that
\begin{align}\label{shortcal}
\varphi^1(x)+\psi^1(T_{\alpha_1}^{(\tau_1)}(x))=
\begin{cases}
2, & x \in I_{k_1}, k_1\in \{2,\dots ,(M_1-1)/2,\\
    &\phantom{x \in I_{k_1}, k_1\in} (M_1+3)/2,\dots ,M_1-1\},\\
-\tfrac{M_1-5}2,& x \in I_{k_1}, k_1=1, M_1, \\
1,& x \in I_{(M_1+1)/2}.\\
\end{cases}
\end{align}
Next figure is a representation of this ``quasi-cost'' at level
$n=1,$ with the same value $M_1=11$ as in  Figure 1.
\begin{center}
\par\vskip 1cm
\scalebox{1} 
{
\begin{pspicture}(0,-2.18)(13.535,2.18)
\definecolor{color2873}{rgb}{0.0,0.4,1.0}
\definecolor{color2874}{rgb}{1.0,0.0,0.2}
\psline[linewidth=0.02cm,linestyle=dotted,dotsep=0.16cm](2.1021874,1.29)(8.422188,1.25)
\psline[linewidth=0.02cm](2.4221876,0.29)(13.422188,0.23)
\psdots[dotsize=0.198,dotstyle=|](2.4221876,0.29)
\psdots[dotsize=0.198,dotstyle=|](13.422188,0.25)
\psline[linewidth=0.04cm](7.3821874,0.73)(8.402187,0.73)
\psline[linewidth=0.04cm,linecolor=color2873](3.3421874,1.27)(7.4021873,1.27)
\psline[linewidth=0.04cm,linecolor=color2873](8.442187,1.25)(12.422188,1.25)
\psline[linewidth=0.04cm,linecolor=color2874](2.4221876,-1.71)(3.3821876,-1.71)
\psline[linewidth=0.04cm,linecolor=color2874](12.422188,-1.75)(13.402187,-1.75)
\psline[linewidth=0.02cm,linestyle=dotted,dotsep=0.16cm](2.4221876,0.27)(2.4221876,-1.69)
\psline[linewidth=0.02cm,linestyle=dotted,dotsep=0.16cm](3.3621874,1.25)(3.3621874,-1.73)
\psline[linewidth=0.02cm,linestyle=dotted,dotsep=0.16cm](7.3821874,1.25)(7.3821874,0.23)
\psline[linewidth=0.02cm,linestyle=dotted,dotsep=0.16cm](8.402187,1.21)(8.402187,0.23)
\psline[linewidth=0.02cm,linestyle=dotted,dotsep=0.16cm](12.422188,1.25)(12.402187,-1.71)
\psline[linewidth=0.02cm,linestyle=dotted,dotsep=0.16cm](13.402187,0.23)(13.402187,-1.71)
\psline[linewidth=0.02cm,arrowsize=0.093cm
2.0,arrowlength=1.4,arrowinset=0.4]{->}(2.0821874,-2.17)(2.1021874,2.17)
\psdots[dotsize=0.102,dotstyle=+](2.0821874,0.29)
\psdots[dotsize=0.102,dotstyle=+](2.0821874,1.29)
\psdots[dotsize=0.102,dotstyle=+](2.0821874,0.81)
\psdots[dotsize=0.102,dotstyle=+](2.0821874,-1.71)
\usefont{T1}{ptm}{m}{n}
\rput(1.7696875,0.25){\small $0$}
\usefont{T1}{ptm}{m}{n}
\rput(1.7896875,0.81){\small $1$}
\usefont{T1}{ptm}{m}{n}
\rput(1.7696875,1.25){\small $2$}
\usefont{T1}{ptm}{m}{n}
\rput(1.4096875,-1.67){\small $-\frac{M_1-5}{2}$}
\usefont{T1}{ptm}{m}{n}
\rput(2.5696876,-0.05){\small $0$}
\usefont{T1}{ptm}{m}{n}
\rput(13.229688,-0.17){\small $1$}
\usefont{T1}{ptm}{m}{n}
\rput(7.8996873,-0.13){\small $\IM^1$}
\psline[linewidth=0.02cm,linestyle=dotted,dotsep=0.16cm](3.4021876,-1.69)(12.362187,-1.73)
\psline[linewidth=0.02cm,linestyle=dotted,dotsep=0.16cm](2.1021874,0.79)(7.4021873,0.75)
\psline[linewidth=0.02cm,linestyle=dotted,dotsep=0.16cm](2.1021874,-1.69)(2.4421875,-1.69)
\psdots[dotsize=0.2,dotstyle=|](7.3821874,0.27)
\psdots[dotsize=0.2,dotstyle=|](8.402187,0.25)
\end{pspicture}
}
    \vskip 0.5cm
\textit{Fig.\,2}.\quad Representation of $\varphi^1+\psi^1\circ
T_{\alpha_1}^{(\tau_1)}.$
\end{center}
\par\bigskip

\noindent\textit{Assessment of Step $n=1.$}\ Let us resume what we
have achieved in the first induction step. For later use we
formulate things only in terms of $\phi^1 (\cdot)$ rather than
$\psi^1(\cdot)= 1-\phi^1(\cdot).$
\\
For the set $J^g_1=\{2,\dots ,\frac{M-1}{2}\} \cup \{\frac{M+3}{2},\dots ,M_1-1\}$ of
{\it ``good\footnote{We use the term ``good'' rather than ``regular'' as the abbreviation $r$ is already taken by the word ``right''.}}
indices''  we have
\begin{align}\label{Seite14}
\phi^1(x)-\phi^1 (T_{\alpha_1}^{(\tau_1)} (x)) =1, \qquad x\in I_{k_1}, k_1\in J_1^g,
\end{align}
while for the set $J_1^s=\{1,M_1\}$ of {\it ``singular indices''}  we have
\begin{align}\label{Seite14a}
\phi^1 (x)-\phi^1(T_{\alpha_1}^{(\tau_1)}(x))=-\frac{M_1-3}{2}, \qquad x\in I_{k_1}, \ k_1\in J^s,
\end{align}
so that
\begin{align*}
 \sum\limits_{k_1\in J_1^s}\ \int\limits_{I_{k_1}} [\phi^1(x)-\phi^1(T_{\alpha_1}^{(\tau_1)}(x))]\, dx = -\frac{M_1-3}{2} \ \frac{2}{M_1}
=-1+\frac{3}{M_1}.
\end{align*}
For the middle interval $I^1_\text{middle} =I_{(M_1+1)/2}$ we have $\phi^1(x)-\phi^1 (T^{\tau_1}_{\alpha_1}(x))=0.$
\\
We also note for later use that, for $x\in[0,1)$, the orbit $(T^i_{\alpha_1}(x))^{\tau_1(x)}_{i=1}$ never visits $\IM^1.$ Here we mean that $i$ runs
through $\{\tau_1(x), \tau_1(x)+1,\ldots ,-1\}$ when $\tau_1(x)<0$ and runs through the empty set when $\tau_1(x)=0.$

\medskip\noindent{\bf Step n=2:}
We now pass from $\alpha_1=\frac1{M_1}$ to
$\alpha_2=\frac1{M_1}+\frac1{M_2}$, where $M_2=M_1m_2 =m_1 m_2$
and where $m_2$, to be specified below, satisfies the relations of
Lemma \ref{RelPrimeLemma} and is large compared to $M_1$. For
$1\le k_1 \le M_1$ and $1\le k_2\le m_2$ we denote by
$I_{k_1,k_2}$ the interval
$$
I_{k_1,k_2} =\left[ \tfrac{k_1-1}{M_1} +\tfrac{k_2-1}{M_2} , \tfrac{k_1-1}{M_1} +\tfrac{k_2}{M_2}\right).
$$
Similarly as above we will also use the  notations  $L^2=[0, \frac12 - \frac1{2M_2}),
 R^2=[\frac12 + \frac1{2M_2}, 1),$ and $I^2_{\text{middle}}=I_{(M_1+1)/2,(m_2+1)/2}=[\frac12 -\frac1{2M_2}, \tfrac12 + \tfrac1{2M_2})$.
\\
We now define functions $\varphi^2, \psi^2$ such that
$\varphi^2(x)+\psi^2(x)\equiv 1$ and
$$
\varphi^2(x)+\psi^2(T_{\alpha_2}(x))=
\begin{cases}
0,& x\in L^2,\\
1,& x\in I^2_{\text{middle}},\\
2,& x\in R^2.
\end{cases}
$$
This is achieved if we set, e.g., $\varphi^2\equiv 0$ on $I_{1,1}$, and
\begin{align}\label{K1a}
 \varphi^2(T_{\alpha_2}(x))&=\varphi^2(x)+
\begin{cases}
1& x \in L^2,\\
0& x \in \IM^2, \\
-1& x \in R^2,\\
\end{cases}
\\
\psi^2(x)&=1-\varphi^2(x).\nonumber
\end{align}
Yet another way to express this is to say that for $j\in
\{0,\ldots,M_2-1\}$ we have
\begin{equation}\label{21}
\begin{split}
\varphi^2(T_{\alpha_2}^j(x))=
    \#&\{i\in\{0,\ldots,j-1\}: T_{\alpha_2}^i(x) \in L^2 \}\\
    &-\#\{i\in \{0,\ldots,j-1\}: T_{\alpha_2}^i(x) \in R^2 \}
\end{split}
,\quad x\in I_{1,1},
\end{equation}
in analogy to \eqref{BB1}.

While the function $\phi^1(x)$ in the first induction step was
increasing from $I_1$ to $I_{(M_1+1)/2}$ and then decreasing from
$I_{(M_1+3)/2}$ to $I_{M_1}$, the function $\phi^2(x)$ displays a
similar feature on each of the intervals $I_{k_1}$: roughly
speaking, i.e.\ up to terms controlled by $M_1$, it increases on
the left half of each such interval and then decreases again on
the right half. The next lemma makes this fact precise. We keep in
mind, of course, that $m_2$ will be much bigger than $M_1$.

\begin{lemma}[Oscillations of $\phi^2$]\label{L2.2}
The function $\phi^2$ defined in \eqref{K1a} has the following properties.\\
\begin{enumerate}
    \item $|\phi^2(x)-\phi^2(x\oplus\tfrac1{M_2})|\le \ 4M^2_1, \quad x\in[0,1).$
    \item For each $1\le k'_1, k^{''}_1 \le M_1$ we have
$${\phi^2}_{|I_{k'_1, (m_2+1)/2}} -{\phi^2}_{|I_{k''_1,1}} \geq \tfrac{m_2}{2M_1}-10M^3_1.$$
\end{enumerate}
\end{lemma}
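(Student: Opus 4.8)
\begin{sketchofproof}{}
Throughout, recall that $\phi^2$ is constant on each interval $I_{k_1,k_2}$, which is the small interval $[\tfrac{\ell-1}{M_2},\tfrac{\ell}{M_2})$ with $\ell=(k_1-1)m_2+k_2$ (immediate from \eqref{21}); so it suffices to understand the numbers $\phi^2_{\,|I_{k_1,k_2}}$. Put $w:=\I_{L^2}-\I_{R^2}$, so that \eqref{K1a} reads $\phi^2\circ T_{\alpha_2}-\phi^2=w$; together with $\phi^2_{\,|I_{1,1}}=0$ this is the Birkhoff--sum description \eqref{21}: $\phi^2$ on the interval reached from $I_{1,1}$ after $j\in\{0,\dots,M_2-1\}$ steps of $T_{\alpha_2}$ equals $\sum_{i=0}^{j-1}w(T^i_{\alpha_2}(x))$, $x\in I_{1,1}$. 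The plan is first to make the ``orbit--time $\leftrightarrow$ interval'' correspondence explicit, then to split $w$ into a coarse (column) and a fine (row) part. Using $\alpha_2=\tfrac{m_2+1}{M_2}$ and the congruence $m_2\equiv1\ (m_1)$ from \eqref{KongOne} --- whence $(m_2+1)^{-1}\equiv1\ (m_2)$, $(m_2+1)^{-1}\equiv 2^{-1}\ (m_1)$, and $m_2-1=Km_1$ with $K$ even --- a computation modulo $M_2=m_1m_2$ gives: the interval occupied at time $i=qm_2+r$ ($0\le q<m_1$, $0\le r<m_2$) is $I_{a+1,b+1}$ with $b=r$ and $a=(2q+r)\bmod m_1$; conversely $I_{a+1,b+1}$ is visited at time $j=b+sm_2$ with $s=(a-b)\,2^{-1}\bmod m_1$.

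Next I would write $w=w_1+w_2$, where on the interval with coordinates $(a,b)$ one sets $w_1=\operatorname{sgn}(\tfrac{m_1-1}{2}-a)$ and $w_2=\I[\,a=\tfrac{m_1-1}{2}\,]\cdot\operatorname{sgn}(\tfrac{m_2-1}{2}-b)$; comparing with the locations of $L^2,\IM^2,R^2$ one checks $w=w_1+w_2$. Thus $\phi^2_{\,|I_{a+1,b+1}}=A_j+B_j$ with $A_j=\sum_{i<j}w_1$, $B_j=\sum_{i<j}w_2$. Grouping the times $i<j$ by the value of $q_i$ and using that $w_1$ has zero sum over any block of $m_1$ consecutive residues while $m_2\equiv1\ (m_1)$, the contribution of each complete $q$--block collapses to the single term $w_1(2q\bmod m_1)\in\{-1,0,1\}$; as there are $s<m_1$ such blocks plus one incomplete block, $|A_j|\le 2m_1$. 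For $B_j$, the times $i$ with $a_i=\tfrac{m_1-1}{2}$ are exactly $\{q^*(r)m_2+r:0\le r<m_2\}$ with $q^*(r):=(\tfrac{m_1-1}{2}-r)\,2^{-1}\bmod m_1$, and there $b_i=r$; hence, with $j=b+sm_2$,
\[
B_j=\sum_{r:\,q^*(r)<s}\operatorname{sgn}(\tfrac{m_2-1}{2}-r)\ +\ \sum_{r:\,q^*(r)=s,\ r<b}\operatorname{sgn}(\tfrac{m_2-1}{2}-r).
\]
In the first sum, $\I[q^*(r)<s]$ selects exactly $s$ out of each $m_1$ consecutive values of $r$ (since $q^*$ advances by the generator $-2^{-1}$ of $\Z/m_1$), summed against the mean--zero sign $\operatorname{sgn}(\tfrac{m_2-1}{2}-r)$. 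Cutting $\{0,\dots,m_2-1\}$ into the $K$ blocks $[am_1,(a+1)m_1)$ plus the singleton $\{m_2-1\}$, and using that $\tfrac{m_2-1}{2}=\tfrac K2 m_1$ is itself a block boundary, the full blocks left of the sign change ($\tfrac K2$ of them) and those to its right ($\tfrac K2-1$ of them) contribute $s$ up to an $O(m_1)$ error from the straddling block and the singleton, so $\big|\sum_{r:\,q^*(r)<s}\operatorname{sgn}(\tfrac{m_2-1}{2}-r)\big|\le m_1$. The second sum is empty if $b=0$, has at most one term if $b=1$, and for $b=\tfrac{m_2-1}{2}$ equals $\#\{0\le r<\tfrac{m_2-1}{2}:q^*(r)=s\}=\tfrac K2=\tfrac{m_2-1}{2m_1}$ --- the main term.

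With this in hand, assertion (ii) follows: for $I_{k_1',(m_2+1)/2}$ (row $b=\tfrac{m_2-1}{2}$, visited at time $j_a$) and $I_{k_1'',1}$ (row $b=0$, visited at time $j_b$),
\[
\phi^2_{\,|I_{k_1',(m_2+1)/2}}-\phi^2_{\,|I_{k_1'',1}}=(A_{j_a}-A_{j_b})+(B_{j_a}-B_{j_b})\ \ge\ -4m_1+\Big(\tfrac{m_2-1}{2m_1}-2m_1\Big)\ \ge\ \tfrac{m_2}{2M_1}-10M_1^3 ,
\]
using $M_1=m_1$ and $\tfrac{m_2-1}{2m_1}\ge\tfrac{m_2}{2m_1}-1$. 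For assertion (i), set $\sigma(y):=y\oplus\tfrac1{M_2}$ and $g:=\phi^2\circ\sigma-\phi^2$; then $g$ is constant on each small interval and there equals $\phi^2(x\oplus\tfrac1{M_2})-\phi^2(x)$, so it suffices to bound $g$. Since $\sigma$ commutes with $T_{\alpha_2}$ and $\phi^2\circ T_{\alpha_2}-\phi^2=w$, one gets $g\circ T_{\alpha_2}-g=w\circ\sigma-w=:\delta$, and $\delta$ is supported on exactly three small intervals, with values $-1,-1,2$ (so $\int\delta=0$). Hence, as each interval is met at most once in $j<M_2$ orbit steps, $g$ differs from $g_{\,|I_{1,1}}=\phi^2_{\,|I_{1,2}}$ by at most $2$; and $|\phi^2_{\,|I_{1,2}}|=|A+B|\le 3m_1+1$ by the estimates of the previous paragraph (the second sum in $B$ being harmless here, $b=1$), so $|g|\le 3m_1+3\le 4M_1^2$.

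The substance is bookkeeping rather than a new idea. The two delicate points are: establishing the $(q,r)\leftrightarrow(a,b)$ dictionary of the first paragraph exactly --- this is precisely where the congruence $m_2\equiv1\ (m_1)$ enters --- and checking that the ``fast $\times$ slow'' correlation forming the first part of $B_j$ really telescopes down to $O(m_1)$, which hinges on $\tfrac{m_2-1}{2}$ being an exact multiple of $m_1$, so that the single sign change of the slow factor falls on a block boundary. With those settled, the remaining estimates are routine.
\end{sketchofproof}
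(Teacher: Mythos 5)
Your proof is correct, and it takes a genuinely different route from the paper. The paper's argument is dynamical: for (i) it identifies $T_{\alpha_2}^{M_1}$ as a small right shift and $T_{\alpha_2}^{m_2-2}$ as a small left shift, realizes the shift by $1/M_2$ as $T_{\alpha_2}^{\frac{M_1-1}{2}(m_2-2)+M_1}$, and bounds the $L^2$-versus-$R^2$ discrepancy over that orbit segment by stacking $M_1$ translates of a length-$(m_2-2)$ piece against the exactly balanced full orbit of length $M_2$; for (ii) it tracks the orbit from $I_{k_1,1}$ and notes each $M_1$-block gains one net visit to $L^2$. Your version instead makes the arithmetic fully explicit: you set up the bijection $i=qm_2+r\leftrightarrow(a,b)$ with $b=r$ and $a\equiv 2q+r\ (m_1)$ (this is where $m_2\equiv 1\ (m_1)$ from \eqref{KongOne} enters, as in the paper), split $w=\I_{L^2}-\I_{R^2}$ into a column part $w_1(a)$ (mean zero over $\Z/m_1$) and a row part $w_2(b)$ living on the middle column, and estimate the Birkhoff sums $A_j,B_j$ separately by elementary residue counting. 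Your treatment of (i) via $g\circ T_{\alpha_2}-g=w\circ\sigma-w$ having three-point support with zero mean is a slicker telescoping than the paper's, and your constants come out sharper ($3m_1+3$ vs.\ $4M_1^2$ for (i), error $6m_1$ vs.\ $10M_1^3$ for (ii)), at the price of heavier bookkeeping; the main term $\tfrac{m_2-1}{2m_1}$ in (ii) is read off from the second $B$-sum exactly as you claim. I checked the $(q,r)\leftrightarrow(a,b)$ dictionary, the decomposition $w=w_1+w_2$, and the block/straddle/singleton count (the evenness of $K$ you invoke is what makes $\tfrac{m_2-1}{2}$ a block boundary), and all the estimates go through.
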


\begin{proof} Let us begin with the proof of (i).

\noindent$\bullet$ \textit{ Proof of (i).}\ While
$T^{M_1}_{\alpha_1}= id$ holds true, we have that
$T^{M_1}_{\alpha_2}$ is only close to the identity map. In fact,
as $T_{\alpha_2} (x) =x\oplus \tfrac{m_2+1}{M_2},$ we have
\begin{align}\label{K2}
& T^{M_1}_{\alpha_2} (x) =x\oplus \tfrac{M_1}{M_2}.
\end{align}
Somewhat less obvious is the fact that $T^{m_2-2}_{\alpha_2}$ also
is close to the identity map. In fact
\begin{align}\label{K3}
T^{m_2-2}_{\alpha_2}(x)=x\ominus \tfrac2{M_2}.
\end{align}
Indeed, by \eqref{KongOne} applied to $i=1$, there is $c\in\N$
such that $m_2=cM_1+1$. Hence
\begin{align*}
T^{m_2-2}_{\alpha_2} (x) & = x\oplus (m_2-2)\frac{m_2+1}{M_2}
\\ &  = x \oplus (cM_1-1) \frac{m_2+1}{M_2}
\\ &  = x \oplus \frac{cM_2-m_2+(m_2-2)}{M_2} =x\ominus \frac2{M_2}.
\end{align*}
Here is one more remarkable feature of the map $T^{m_2-2}_{\alpha_2}$.

\bigskip
\noindent {\it Claim:} {\it For $x\in[0,1)$ the orbit $(T^i_\alpha
(x))^{m_2-2}_{i=1}$ visits the intervals $L^2=[0,\tfrac12
-\tfrac1{2M_2})$ and $R^2=[\tfrac12 +\tfrac1{2M_2} ,1)$
approximately equally often. More precisely, the difference of the
visits of these two intervals is bounded in absolute value by
$4M_1$.}

\medskip
Indeed, by Lemma \ref{RelPrimeLemma}, the orbit $(T^i_{\alpha_2}
(x))^{M_2}_{i=1}$ visits each of the intervals $I_{k_1,k_2}$
exactly one time so that it visits $L^2$ and $R^2$ equally often,
namely $\tfrac{M_2-1}{2}$ times. The $M_1$ many disjoint subsets
$\left(T_{\alpha_2}^{j(m_2-2)}\left(T^i_{\alpha_2}
(x)\right)^{m_2-2}_{i=1}\right)^{M_1}_{j=1}$ of this orbit are
obtained by shifting them successively by $2/M_2$ to the left
\eqref{K3}. As the difference
    $(T^i_{\alpha_2} (x))^{M_2}_{i=1}
\setminus \left(T_{\alpha_2}^{j(m_2-2)}\left(T^i_{\alpha_2}
(x)\right)^{m_2-2}_{i=1}\right)^{M_1}_{j=1}$ consists only of
$2M_1$ many points we have that the difference of the visits of
$\left(T_{\alpha_2}^{j(m_2-2)}\left(T^i_{\alpha_2}
(x)\right)^{m_2-2}_{i=1}\right)^{M_1}_{j=1}$ to $L^2$ and $R^2$ is
bounded by $4M_1$. This implies that the difference of the visits
of $(T^i_{\alpha_2} (x))^{m_2-2}_{i=1}$ to $L^2$ and $R^2$ can be
estimated by $4M_1$ too: indeed, if this orbit visits $4M_1+k$
many times $L^2$ more often then $R^2$ (or vice versa) for some
$k\geq 0$, then
$(T^{m_2-2}_{\alpha_2}(T^i_{\alpha_2}(x)))^{m_2-2}_{i=1}$ visits
$L^2$ at least $4M_1+k-4$ many times more often than $R^2$ etc.
and finally
$(T_{\alpha_2}^{M_1(m_2-2)}(T^i_{\alpha_2}(x)))^{m_2-2}_{i=1}$
visits $L^2$ at least $k$ many times more often than $R^2$ which
yields a contradiction. Hence we have proved the claim.

    To prove assertion (i) note that by \eqref{K2} and
\eqref{K3}
\begin{align}\label{p16}
T_{\alpha_2}^{\tfrac{M_1-1}{2}(m_2-2)} \circ T_{\alpha_2}^{M_1} (x) = x\oplus \tfrac1{M_2}
\end{align}
We deduce from the claim that the difference of the visits of the orbit $(T^i_{\alpha_2})^{\tfrac{M_1-1}{2}(m_2-2)+M_1}_{i=0}$ to $L^2$ and $R^2$ is bounded
in absolute value by $\tfrac{M_1-1}{2}(4M_1)+M_1$ which proves (i).

\medskip\noindent$\bullet$ \textit{ Proof of (ii).}\
As regards (ii) suppose first $k'_1=k''_1=:k_1$. Note that, for
$x\in I^{\text{left}}_{k_1}: = [\tfrac{k_1-1}{M_1},
\tfrac{k_1-1}{M_1}+ \tfrac1{2M_1}-\tfrac{2M_1+1}{2M_2})$, we have
that the orbit $(T^i_{\alpha_2}(x))^{M_1-1}_{x=0}$ visits $L^2$
one time more often than $R^2$, namely $\tfrac{M_1+1}{2}$ versus
$\tfrac{M_1-1}{2}$ times. If we start with $x\in I_{k_1,1}$ then,
for $1\le j< \tfrac{m_2}{2M_1} -1$ we have that
$T^{jM_1}_{\alpha_2}(x) \in I^{\text{left}}_{k_1}.$ Hence, for the
orbit
$(T^i_{\alpha_2})_{i=0}^{(\lfloor\tfrac{m_2}{2M_1}\rfloor-1)M_1}$,
the difference of the visits to the interval $L^2$ and $R^2$
equals $\lfloor\tfrac{m_2}{2M_1}\rfloor-1$, the integer part of
$\tfrac{m_2}{2M_1}-1$. Combining this estimate with the estimate
(i) as well as the fact that the distance between $x\oplus
\left(\lfloor\tfrac{m_2}{2M_1}\rfloor-1\right)\tfrac{M_1}{M_2}$
and $x\oplus\tfrac{m_2-1}{2M_2}$ is bounded by
$\tfrac{2M_1-1}{M_2}$, we obtain, for $x\in I_{k_1,1}$ and $y\in
I_{k_1,\tfrac{m_2+1}{2}},$ that
\begin{align*}
\phi^2(y)-\phi^2(x) & \geq
\phi^2(T_{\alpha_2}^{(\lfloor\tfrac{m_2}{2M_1}\rfloor-1)M_1}
(x))-\phi^2(x)
 -\Big|\phi^2(y)-\phi^2 (T_{\alpha_2}^{(\lfloor\tfrac{m_2}{2M_1}\rfloor-1)M_1} (x))\Big|
\\ &  \geq (\lfloor\tfrac{m_2}{2M_1}\rfloor-1)-  (2M_1-1) (4M^2_1)
\\ &  \geq \tfrac{m_2}{2M_1} -8M^3_1.
\end{align*}
Passing to the general case $1\le k'_1,k''_1\le M_1$ observe that $T_{\alpha_2}^{k''_1- k'_1}$ maps $I_{k'_1,\tfrac{m_2+1}{2}}$ to $I_{k''_1,
\tfrac{m_2+1}{2}+k''_1- {k'_1}}.$
Using again (i) we obtain estimate (ii).
\end{proof}

We now are ready to do the inductive construction for $n=2$. For
$m_2$ satisfying the conditions of Lemma 3.1 and to be specified
below, we shall define $\tau_2:[0,1)\to\{-\tfrac{M_2-1}{2},\dots
,0,\dots ,\tfrac{M_2-1}{2}\}$, where $M_2=m_2m_1$, such that the
map
\begin{equation*}
T^{(\tau_2)}_{\alpha_2}: \left\{
    \begin{array}{rcl}
      [0,1) & \to & [0,1) \\
      x & \mapsto & T^{(\tau_2)}_{\alpha_2}(x):= \ T^{\tau_2(x)}_{\alpha_2}(x) \\
    \end{array}
    \right.
 \end{equation*}
has the following properties.

\begin{enumerate}
\item
The measure-preserving bijection $T^{(\tau_2)}_{\alpha_2}:[0,1)\to [0,1)$ maps each interval $I_{k_1}$ onto $T^{(\tau_1)}_{\alpha_1} (I_{k_1}).$ It induces a permutation of the intervals
$I_{k_1,k_2},$ where $1\le k_1\le M_1, 1\le k_2\le m_2.$
\item
When $\tau_{2}(x) >0$, we have
\begin{align}\label{p24a}
T^i_{\alpha_{2}}(x) \notin I^{2}_{\text{middle}}, \qquad i=0,\ldots ,\tau_{2}(x),
\end{align}
and, when $\tau_{2}(x)<0$, we have
\begin{align}\label{p24b}
T^i_{\alpha_{2}}(x)\notin I^{2}_{\text{middle}}, \qquad i=\tau_{2}(x),\ldots ,0.
\end{align}
\item
On the {\it ``good''} intervals
$I_{k_1}$, where $k_1\in J_1^g=\{2,\dots ,\tfrac{M_1-1}{2}\} \cup \{\tfrac{M_1+3}{2}, \dots ,M_1-1\}$, for which we have, by \eqref{Seite14},
$$\phi^1 (x)-\phi^1 (T^{(\tau_1)}_{\alpha_1}(x))=1,$$ the function $\tau_2$ will satisfy the estimates
\begin{align}\label{p33}
\mu[I_{k_1} \cap \{\tau_{2}\neq\tau_{1}\}]\leq \tfrac{M_1}{m_2} \mu [I_{k_1}],
\end{align}
and
\begin{align}\label{K8a}
\sum\limits_{k_1\in J_1^g}\ \ \int_{I_{k_1}} |1-\phi^2 (x)+\phi^2
(T^{(\tau_2)}_{\alpha_2} (x))|\,dx < \frac{4M^2_1}{m_2}.
\end{align}
\item
On the {\it ``singular''} intervals $I_{k_1}$, where $k_1 \in J_1^s =\{1,M_1\}$, for which we have , by \eqref{Seite14a},
$$\phi^1 (x)-\phi^1 (T^{(\tau_1)}_{\alpha_1}(x))=-\frac{M_1-3}{2},$$
we split $\{1,\dots ,m_2\}$ into a set $J^{k_1,g}$ of {\it
``good''} indices, and a set $J^{k_1, s}$ of {\it ``singular''}
indices, such that
$$\phi^2(x)-\phi^2 (T^{(\tau_2)}_{\alpha_2}(x))=0,\quad \mbox{for} \ x\in I_{k_1,k_2}, k_2\in J^{k_1,g},$$
while
$$\phi^2(x)-\phi^2 (T^{(\tau_2)}_{\alpha_2}(x)) < -\tfrac{m_2}{2M_1} +20M^3_1 \quad \mbox{for} \ x\in I_{k_1,k_2}, k_2\in J^{k_1,s}$$
where $J^{k_1,s}$ consists of $M_1(M_1-3)$  many elements of
$\{1,\dots ,m_2\}.$
\\
Hence we have a total ``singular mass'' of
\begin{align}\label{p18}
\sum\limits_{k_1\in J_1^s}\ \sum\limits_{k_2 \in J^{k_1,s}}\
\int_{I_{k_1,k_2}} [\phi^2 (x) -\phi^2
(T^{(\tau_2)}_{\alpha_2}(x))]\, dx < -1+\tfrac{3}{M_1} +
\tfrac{c(M_1)}{m_2},
\end{align}
where $c (M_1)$ is a constant depending only on $M_1$.
\item
On the middle interval $I^1_\text{middle} =I_{\frac{M_1+1}{2}}$ we simply let $\tau_2 =\tau_1 =0$.
\end{enumerate}

Let us illustrate graphically an interesting property of this
construction, namely the shape of the quasi-cost function
$\varphi^2+\psi^2\circ T_{\alpha_2}^{(\tau_2)}$.

\medskip
\begin{center}
\scalebox{1} 
{
\begin{pspicture}(0,-3.36)(13.575,3.36)
\definecolor{color852b}{rgb}{0.2,0.8,1.0}
\definecolor{color891b}{rgb}{1.0,0.0,0.4}
\psline[linewidth=0.02cm,linestyle=dotted,dotsep=0.16cm](1.8021874,2.47)(8.122188,2.43)
\psline[linewidth=0.02cm](2.0421875,1.47)(13.082188,1.41)
\psdots[dotsize=0.198,dotstyle=|](2.0821874,1.47)
\psdots[dotsize=0.198,dotstyle=|](13.082188,1.43)
\psline[linewidth=0.04cm](7.0621877,1.97)(8.082188,1.97)
\psline[linewidth=0.02cm,linestyle=dotted,dotsep=0.16cm](7.0821877,2.43)(7.0821877,1.41)
\psline[linewidth=0.02cm,linestyle=dotted,dotsep=0.16cm](8.102187,2.39)(8.102187,1.41)
\psline[linewidth=0.02cm,arrowsize=0.093cm
2.0,arrowlength=1.4,arrowinset=0.4]{->}(1.7621875,-0.47)(1.8021874,3.35)
\psdots[dotsize=0.102,dotstyle=+](1.7821875,1.47)
\psdots[dotsize=0.102,dotstyle=+](1.7821875,2.47)
\psdots[dotsize=0.102,dotstyle=+](1.7821875,1.99)
\usefont{T1}{ptm}{m}{n}
\rput(1.4696875,1.43){\small $0$}
\usefont{T1}{ptm}{m}{n}
\rput(1.4896874,1.99){\small $1$}
\usefont{T1}{ptm}{m}{n}
\rput(1.4696875,2.43){\small $2$}
\usefont{T1}{ptm}{m}{n}
\rput(1.9296875,1.17){\small $0$}
\usefont{T1}{ptm}{m}{n}
\rput(13.269688,1.19){\small $1$}
\usefont{T1}{ptm}{m}{n}
\rput(7.5996876,1.05){\small $\IM^1$}
\psdots[dotsize=0.2,dotstyle=|](7.0821877,1.45)
\psdots[dotsize=0.2,dotstyle=|](8.102187,1.43)
\psline[linewidth=0.02cm,linestyle=dashed,dash=0.16cm
0.16cm](1.7421875,-0.69)(1.7421875,-1.33)
\psline[linewidth=0.02cm](1.7621875,-1.47)(1.7621875,-3.35)
\psline[linewidth=0.04cm](2.3621874,1.97)(2.8621874,1.97)
\psline[linewidth=0.02cm,linestyle=dotted,dotsep=0.16cm](2.3421874,1.95)(2.3421874,-2.53)
\psline[linewidth=0.02cm,linestyle=dotted,dotsep=0.16cm](2.8421874,1.97)(2.8821876,-2.51)
\psline[linewidth=0.02cm,linestyle=dotted,dotsep=0.16cm](3.0421875,2.45)(3.0821874,-2.51)
\psline[linewidth=0.02cm,linestyle=dotted,dotsep=0.16cm](2.0621874,1.45)(2.0621874,-2.49)
\psline[linewidth=0.02cm,linestyle=dotted,dotsep=0.16cm](12.2421875,1.93)(12.2421875,-2.55)
\psline[linewidth=0.02cm,linestyle=dotted,dotsep=0.16cm](12.722187,1.93)(12.762188,-2.55)
\psline[linewidth=0.02cm,linestyle=dotted,dotsep=0.16cm](13.062187,1.43)(13.082188,-2.53)
\psline[linewidth=0.02cm,linestyle=dotted,dotsep=0.16cm](12.062187,2.45)(12.062187,-2.51)
\psline[linewidth=0.04cm](12.2421875,1.95)(12.7421875,1.95)
\psline[linewidth=0.02cm,linestyle=dotted,dotsep=0.16cm](1.8421875,1.99)(12.322187,1.95)
\psdots[dotsize=0.12,dotstyle=+](1.7421875,-2.55)
\usefont{T1}{ptm}{m}{n}
\rput(1.0496875,-2.53){\small $\propto -\frac{M_2}{M_1^2}$}
\psframe[linewidth=0.02,dimen=outer,fillstyle=solid,fillcolor=color852b](7.0621877,2.55)(3.0421875,2.33)
\psframe[linewidth=0.02,dimen=outer,fillstyle=solid,fillcolor=color852b](12.102187,2.53)(8.062187,2.33)
\psframe[linewidth=0.02,dimen=outer,fillstyle=solid,fillcolor=color891b](13.082188,-2.29)(12.782187,-2.89)
\psframe[linewidth=0.02,dimen=outer,fillstyle=solid,fillcolor=color891b](12.282187,-2.29)(12.062187,-2.89)
\psframe[linewidth=0.02,dimen=outer,fillstyle=solid,fillcolor=color891b](2.3621874,-2.31)(2.0621874,-2.91)
\psframe[linewidth=0.02,dimen=outer,fillstyle=solid,fillcolor=color891b](3.1221876,-2.31)(2.8821876,-2.93)
\psdots[dotsize=0.12,dotstyle=+](1.7421875,-2.55)
\psline[linewidth=0.02cm,fillcolor=color891b,linestyle=dotted,dotsep=0.16cm](3.1021874,-2.55)(12.042188,-2.55)
\psdots[dotsize=0.2,dotstyle=|](3.0821874,1.47)
\psdots[dotsize=0.2,dotstyle=|](12.062187,1.41)
\usefont{T1}{ptm}{m}{n}
\rput(3.2996874,1.17){\small $\frac{1}{M_1}$}
\usefont{T1}{ptm}{m}{n}
\rput(11.529688,1.13){\small $\frac{M_1-1}{M_1}$}
\end{pspicture}
}
  \vskip 0.5cm
\textit{Fig.\,3}.\quad Shape of the quasi-cost
$\varphi^2+\psi^2\circ T_{\alpha_2}^{(\tau_2)}.$
\end{center}
\textit{The strips in this graphic representation symbolize the
oscillations of the function $\varphi^2+\psi^2\circ
T_{\alpha_2}^{(\tau_2)}$. On the``singular'' set, it achieves
values of order $-M_2/M_1^2.$}
\bigskip

It will sometimes be more convenient to specify to which interval
$I_{l_1,l_2}$ the interval $I_{k_1,k_2}$ is mapped under
$T^{(\tau_2)}_{\alpha_2}$, instead of spelling out the value of
$\tau_2$ on the interval $I_{k_1,k_2}$. Note that by Lemma
\ref{RelPrimeLemma}, for each map associating to $(k_1,k_2)$ a
pair $(l_1,l_2)$, there corresponds precisely one value
$\tau_2|_{I_{k_1,k_2}} : I_{k_1,k_2}\to \{-M_2+1,\dots ,0,\dots
,M_2-1\}$ such that \eqref{p24a} (resp.\ \eqref{p24b}) is
satisfied and $T^{(\tau_2)}_{\alpha_2} (I_{k_1,k_2})=I_{l_1,l_2}$.

\medskip

Let us start with a ``good'' interval $I_{k_1}$, with $k_1 \in
J_1^g$ as in (iii) above, say $k_1\in\{2,\dots
,\tfrac{M_1-1}{2}\}$, for which we have $\tau_1(x)=-1$. Then the
intervals $I_{k_1,2},\dots, I_{k_1,m_2}$ are mapped under
$T^{\tau_1(x)}_{\alpha_2}(x) = T^{-1}_{\alpha_2}(x)$ onto the
intervals $I_{k_1-1,1},\dots ,I_{k_1-1,m_2-1}$. Defining
$\tau_2(x)=\tau_1(x)$ on these intervals we get for $x\in
I_{k_1,k_2}$, where $2\le k_1\le \frac{M-1}{2}, 2\le k_2 \le m_2,$

\begin{align}\label{K11}
1=\phi^1(x)-\phi^1(T^{(\tau_1)}_{\alpha_1}(x))=\phi^2(x)-\phi^2 (T^{(\tau_2)}_{\alpha_2}(x)).
\end{align}

We still have to define the value of $\tau_2(x),$ for $x\in I_{k_1,1}$. The map $T^{(\tau_2)}_{\alpha_2}$ has to map $I_{k_1,1}$ to
the remaining gap $I_{k_1-1,m_2}$, which happens
to be its left neighbour. We do not explicitly calculate the unique number $\tau_2|_{I_{k_1,1}}\in\{-M_2+1,\dots ,M_2-1\}$, satisfying \eqref{p24a}
(resp.~\eqref{p24b}), which does the job, but only use the conclusion of Lemma \ref{L2.2} to find that,
for $x\in I_{k_1,1}$ such that
$T^{(\tau_2)}_{\alpha_2}(x) \in I_{k_1-1,m_2}$,

\begin{align}\label{Nachtrag}
 |1-[\phi^2(x)-\phi^2 (T^{(\tau_2)}_{\alpha_2}(x))]|\le 4M^2_1+1.
 \end{align}
This takes care of the ``good'' intervals $I_{k_1}$, where
$k_1\in\{2,\dots ,\tfrac{M_1-1}{2}\}.$

\vskip 2cm
\begin{center}
\scalebox{1} 
{
\begin{pspicture}(0,0.05171875)(8.144062,2.2348437)
\definecolor{color4894}{rgb}{0.0,0.4,1.0}
\definecolor{color4923}{rgb}{1.0,0.0,0.2}
\psarc[linewidth=0.03,linecolor=color4894,arrowsize=0.073cm
2.51,arrowlength=1.4,arrowinset=0.4]{->}(4.62,-0.09484375){2.1}{42.27369}{139.93921}
\psarc[linewidth=0.03,linecolor=color4894,arrowsize=0.073cm
2.51,arrowlength=1.4,arrowinset=0.4]{->}(4.2,-0.09484375){2.1}{42.27369}{139.93921}
\psarc[linewidth=0.03,linecolor=color4894,arrowsize=0.073cm
2.5,arrowlength=1.4,arrowinset=0.4]{->}(2.62,-0.09484375){2.1}{42.27369}{139.93921}
\psline[linewidth=0.04cm](0.0,1.3051562)(7.78,1.2651563)
\psline[linewidth=0.03cm](0.76,1.5851562)(0.76,0.96515626)
\psdots[dotsize=0.2,dotstyle=|](0.38,1.3051562)
\psdots[dotsize=0.2,dotstyle=|](1.18,1.2851562)
\psdots[dotsize=0.2,dotstyle=|](1.6,1.3051562)
\psdots[dotsize=0.2,dotstyle=|](2.0,1.3051562)
\psdots[dotsize=0.2,dotstyle=|](2.4,1.3051562)
\psdots[dotsize=0.2,dotstyle=|](2.8,1.2851562)
\psdots[dotsize=0.2,dotstyle=|](3.18,1.3051562)
\psdots[dotsize=0.2,dotstyle=|](3.96,1.2851562)
\psdots[dotsize=0.2,dotstyle=|](4.4,1.2851562)
\psdots[dotsize=0.2,dotstyle=|](4.8,1.2851562)
\psdots[dotsize=0.2,dotstyle=|](5.2,1.2851562)
\psdots[dotsize=0.2,dotstyle=|](5.58,1.3051562)
\psdots[dotsize=0.2,dotstyle=|](5.98,1.2851562)
\psdots[dotsize=0.2,dotstyle=|](6.78,1.3051562)
\psline[linewidth=0.03cm](6.38,1.6051563)(6.38,1.0051563)
\psline[linewidth=0.04cm,linecolor=color4923](3.2,1.2851562)(3.98,1.2851562)
\psline[linewidth=0.03cm](3.56,1.6051563)(3.56,1.0051563)
\psbezier[linewidth=0.03,linecolor=color4923,arrowsize=0.093cm
2.0,arrowlength=1.4,arrowinset=0.4]{->}(3.72,1.2788147)(3.72,0.5484861)(3.4,0.52515626)(3.36,1.2416221)
\usefont{T1}{ptm}{m}{n}
\rput(2.1275,0.28515625){\small $I_{(k_1-1)}$}
\usefont{T1}{ptm}{m}{n}
\rput(5.0175,0.26515624){\small $I_{k_1}$}
\psdots[dotsize=0.2,dotstyle=|](7.18,1.3051562)
\psline[linewidth=0.02cm,arrowsize=0.05291667cm
2.0,arrowlength=1.4,arrowinset=0.4]{<->}(6.78,1.6851562)(7.16,1.6851562)
\usefont{T1}{ptm}{m}{n}
\rput(6.9935937,2.0601563){\footnotesize $\frac{1}{M_2}$}
\psline[linewidth=0.02cm,tbarsize=0.07055555cm
5.0,bracketlength=0.15]{[-}(0.78,0.32515624)(1.06,0.32515624)
\psline[linewidth=0.02cm,tbarsize=0.07055555cm
5.0,rbracketlength=0.15]{-)}(5.98,0.30515626)(6.36,0.30515626)
\psline[linewidth=0.02cm,tbarsize=0.07055555cm
5.0,rbracketlength=0.15]{-)}(3.18,0.28515625)(3.56,0.28515625)
\psdots[dotsize=0.12,dotstyle=|](6.78,1.7051562)
\psdots[dotsize=0.12,dotstyle=|](7.18,1.7051562)
\psline[linewidth=0.02cm,tbarsize=0.07055555cm
5.0,bracketlength=0.15]{[-}(3.58,0.28515625)(3.86,0.28515625)
\end{pspicture}
}
\\
 \textit{Fig.\,4-a}.\quad $k_1\in J_1^g$ on the left
side.\footnote{Figure 3 is built with the small value $m_2=7$ for
the sake of clarity of the drawing. But this value is not feasible
since with the lowest $m_1=5,$ \eqref{KongOne} implies that $m_2$
is at least equal to 11; other requirements of the construction
imply that it has to be even larger.}
\end{center}

 For the ``good'' intervals
$I_{k_1}$, where $k_1\in \{\tfrac{M_1+3}{2}, \dots ,M_1-1\}$ we
have $\tau_1(x)=1$ so that $T^{(\tau_1)}_{\alpha_2}$ maps the
intervals $I_{k_1,1},\dots ,I_{k_1,m_2-1}$ to $I_{k_1+1,2}, \dots
,I_{k_1+1,m_2}$. Again we define $\tau_2(x)=\tau_1(x)=1,$ for $x$
in these intervals so that we obtain the identity \eqref{K11}, for
$\tfrac{M_1+3}{2} \le k_1\le M_1-1$ and $1\le k_2\le m_2-1.$
Finally, $T^{(\tau_2)}_{\alpha_2}$ has to map $I_{k_1,m_2}$ to the
interval $I_{k_1+1,1}$ so that again we derive an estimate as in
\eqref{Nachtrag}.

\begin{center}
\scalebox{1} 
{
\begin{pspicture}(0,-0.41546875)(7.8,2.7020311)
\definecolor{color5290}{rgb}{0.0,0.4,1.0}
\definecolor{color5319}{rgb}{1.0,0.0,0.2}
\psarc[linewidth=0.03,linecolor=color5290,arrowsize=0.073cm
2.0,arrowlength=1.4,arrowinset=0.4]{<-}(4.62,-0.56203127){2.1}{42.27369}{139.93921}
\psarc[linewidth=0.03,linecolor=color5290,arrowsize=0.073cm
2.0,arrowlength=1.4,arrowinset=0.4]{<-}(4.2,-0.56203127){2.1}{42.27369}{139.93921}
\psarc[linewidth=0.03,linecolor=color5290,arrowsize=0.073cm
2.0,arrowlength=1.4,arrowinset=0.4]{<-}(2.62,-0.56203127){2.1}{42.27369}{139.93921}
\psline[linewidth=0.04cm](0.0,0.83796877)(7.78,0.79796875)
\psline[linewidth=0.03cm](0.76,1.1179688)(0.76,0.49796876)
\psdots[dotsize=0.2,dotstyle=|](0.38,0.83796877)
\psdots[dotsize=0.2,dotstyle=|](1.18,0.8179687)
\psdots[dotsize=0.2,dotstyle=|](1.6,0.83796877)
\psdots[dotsize=0.2,dotstyle=|](2.0,0.83796877)
\psdots[dotsize=0.2,dotstyle=|](2.4,0.83796877)
\psdots[dotsize=0.2,dotstyle=|](2.8,0.8179687)
\psdots[dotsize=0.2,dotstyle=|](3.18,0.83796877)
\psdots[dotsize=0.2,dotstyle=|](3.96,0.8179687)
\psdots[dotsize=0.2,dotstyle=|](4.4,0.8179687)
\psdots[dotsize=0.2,dotstyle=|](4.8,0.8179687)
\psdots[dotsize=0.2,dotstyle=|](5.2,0.8179687)
\psdots[dotsize=0.2,dotstyle=|](5.58,0.83796877)
\psdots[dotsize=0.2,dotstyle=|](5.98,0.8179687)
\psdots[dotsize=0.2,dotstyle=|](6.78,0.83796877)
\psline[linewidth=0.03cm](6.38,1.1379688)(6.38,0.53796875)
\psline[linewidth=0.04cm,linecolor=color5319](3.2,0.8179687)(3.98,0.8179687)
\psline[linewidth=0.03cm](3.56,1.1379688)(3.56,0.53796875)
\psbezier[linewidth=0.03,linecolor=color5319,arrowsize=0.093cm
2.0,arrowlength=1.4,arrowinset=0.4]{<-}(3.72,0.8116272)(3.72,0.081298605)(3.4,0.05796875)(3.36,0.7744346)
\usefont{T1}{ptm}{m}{n}
\rput(2.0975,-0.18203124){\small $I_{k_1}$}
\usefont{T1}{ptm}{m}{n}
\rput(5.0475,-0.20203125){\small $I_{(k_1+1)}$}
\psdots[dotsize=0.2,dotstyle=|](7.18,0.83796877)
\psline[linewidth=0.02cm,tbarsize=0.07055555cm
5.0,bracketlength=0.15]{[-}(0.78,-0.16203125)(1.08,-0.16203125)
\psline[linewidth=0.02cm,tbarsize=0.07055555cm
5.0,rbracketlength=0.15]{-)}(5.98,-0.16203125)(6.36,-0.16203125)
\psline[linewidth=0.02cm,tbarsize=0.07055555cm
5.0,rbracketlength=0.15]{-)}(3.18,-0.18203124)(3.56,-0.18203124)
\psline[linewidth=0.02cm,arrowsize=0.05291667cm
2.0,arrowlength=1.4,arrowinset=0.4]{<->}(3.6,2.2179687)(6.36,2.2179687)
\usefont{T1}{ptm}{m}{n}
\rput(4.8775,2.5179687){\small $\frac{1}{M_1}$}
\psdots[dotsize=0.16600001,dotstyle=|](3.6,2.2179687)
\psdots[dotsize=0.16600001,dotstyle=|](6.36,2.2379687)
\psline[linewidth=0.02cm,tbarsize=0.07055555cm
5.0,bracketlength=0.15]{[-}(3.58,-0.18203124)(3.88,-0.18203124)
\end{pspicture}
}
   \vskip 0.3cm
\textit{Fig.\,4-b}.\quad $k_1\in J_1^g$ on the right side.
\end{center}

This finishes item (iii) i.e.\ the definition of $\tau_2$ on the
``good'' intervals $I_{k_1}.$ Noting that on this set we have
$\tau_1 \ne \tau_2$ only on $M_1-3$ many intervals of length
$\tfrac1{M_2}$ we obtain the estimate \eqref{K8a}.

\bigskip

To show (iv) let us first consider the ``singular'' interval
$I_1$, on which we have $\tau_1(x)=\tfrac{M_1-3}{2}$ and $\phi^1
(T^{(\tau_1)}_{\alpha_1}(x))
=\phi^1(T^{(\tau_1)}_{\alpha_1}(x))-\phi^1(x)= \tfrac{M_1-3}{2}.$
For the subintervals $I_{1,k_2}$ of $I_1$, define the set of good
indices as $J^{1,g}=J^{1,g,l}\cup J^{1,g,r}$ where
$$J^{1,g,l}=\{\tfrac{(M_1-3)(M_1-1)}{2}+1,\dots , \tfrac{m_2-1}{2}\}, \quad J^{1,g,r}=\{\tfrac{m_2+1}{2},\dots ,m_2-\tfrac{(M_1-3)(M_1+1)}{2}\}.$$
Let us start by considering $k_2\in J^{1,g,r}.$ We define
$$\tau_2(x)=\tau_1(x)+\frac{M_1-3}{2} M_1=\frac{(M_1-3)(M_1+1)}{2}, \quad x\in I_{1,k_2},k_2 \in J^{1,g,r}.$$
First note that $T^{(\tau_2)}_{\alpha_2}$ then maps the intervals
$I_{1,k_2}$, for $k_2 \in J^{1,g,r}$, to the intervals
$$
I_{\frac{M_1-1}{2}, \frac{m_2+1}{2}+ \frac{(M_1-3)(M_1+1)}{2}},\
\dots\ , I_{\frac{M_1-1}{2}, m_2}.
$$
Observe that, for $x$ as above, the orbit
$(T^{i}_{\alpha_2}(x))^{\tau_2(x)-1}_{i=0}$ always lies in the
right halfs of the respective intervals $I_{k_1}$.

Let us count how often the orbit $(T^{i}_{\alpha_2}(x))^{\tau_2(x)-1}_{i=0}$ visits $L^2$ and $R^2$ respectively, for $x\in I_{1,k_2}$ and
$k_2\in J^{1,g,r}$. The first $\tau_1(x)=\tfrac{M_1-3}{2}$ elements
of this orbit are all in $L^2$ which yields, similarly as in the induction step $n=1$,
$$\phi^2(T^{(\tau_1)}_{\alpha_2}(x))-\phi^2(x)=\phi^1 (T^{(\tau_1)}_{\alpha_1}(x))-\phi^1(x) =\frac{M_1-3}{2}.$$
But the next $M_1$ many elements of this orbit, namely
$$(T^{i}_{\alpha_2}(x))^{\tau_1(x)+M_1-1}_{i=\tau_1(x)}$$ visit
$R^2$ one time more often than $L^2$ as the unique element of this
orbit which lies in $\IM^1$ belongs to the right half of
$\IM^1$.\\
This phenomenon repeats on the orbit
$(T^{i}_{\alpha_2}(x))^{\tau_1(x)+\tfrac{M_1-3}{2}M_1-1}_{i=0}$
for $\tfrac{M_1-3}{2}$ many times so that
\begin{align}\label{K17}
\begin{split}
\phi^2(x)-\phi^2(T^{(\tau_2)}_{\alpha_2}(x))
& =\phi^2(x)-\phi^2(T^{(\tau_1)}_{\alpha_2}(x))) +\phi^2 (T^{(\tau_1)}_{\alpha_2}(x))-\phi^2 (T^{(\tau_2)}_{\alpha_2}(x)
\\ & = -\frac{M_1-3}{2} + \frac{M_1-3}{2} \\
& =0, \quad  \mbox{for} \ x\in I_{1,k_2} \ \mbox{and} \ k_2\in  J^{1,g,r}.
\end{split}
\end{align}
This takes care of $I_{1,k_2}$ with $k_2\in J^{1,g,r}.$

For $x\in I_{1,k_2}$ with $k_2\in J^{1,g,l}$, the left half of the
``good'' intervals, we define symmetrically
$$
\tau_2(x)=\tau_1(x)-\tfrac{M_1-3}{2}M_1=-\tfrac{(M_1-3)(M_1-1)}{2}.
$$
A similar analysis as above shows that $T^{(\tau_2)}_{\alpha_2}$ maps the intervals $I_{1,k_2},$ where $k_2\in J^{1,g,l},$ to the intervals
$I_{\tfrac{M_1-1}{2},1},\dots ,I_{\tfrac{M_1-1}{2},\tfrac{m_2-1}{2} - \tfrac{(M_1-3)(M_1-1)}{2}}.$
Hence by a symmetric reasoning we again obtain equality \eqref{K17} for $x$ in the intervals $I_{1,k_2},$ and for $k_2\in J^{1,g,r}$ too.

Now we have to deal with the ``singular'' subintervals $I_{1,k_2}$, where $k_2\in J^{1,s}$, and the singular indices are given by
\begin{eqnarray*}
J^{1,s} &=&\{1,\dots ,m_2\} \setminus J^{1,g} \\
&=&\{1,\dots ,\tfrac{(M_1-3)(M_1-1)}{2}\} \cup \{m_2 -
\tfrac{(M_1-3)(M_1+1)}{2}+1,\dots ,m_2\},
\end{eqnarray*}
which consists of $M_1(M_1-3)$ many indices.
\\
The map $T^{(\tau_2)}_{\alpha_2}$ has to map these intervals $I_{1,k_2},$ where $k_2\in J^{1,s}$, to the ``remaining gaps'' $I_{\tfrac{M_1-1}{2},l_2}$ in the interval
$I_{\tfrac{M_1-1}{2}}$, where $l_2\in\{\tfrac{m_2+1}{2} - \tfrac{(M_1-3)(M_1-1)}{2},\dots ,\tfrac{m_2+1}{2}+\tfrac{(M_1-3)(M_1+1)}{2}-1\}.$
Note that the corresponding intervals $I_{\tfrac{M_1-1}{2},l_2}$ are -- roughly speaking -- in the middle of the interval $I_{\tfrac{M_1-1}{2}}$, while the
intervals $I_{1,k_2}$, with
$k_2\in J^{1,s}$, are at the boundary of $I_1$.
\\
To define $\tau_2$ on $I_{1,k_2}$, for $k_2\in J^{1,s}$, choose
any function $\tau_2$ taking values in $\{-M_2+1,\dots ,M_2-1\}$,
satisfying \eqref{p24a} (resp.~\eqref{p24b}) as above, which
induces a bijection between the intervals $(I_{1,k_2})_{k_2\in
J^{1,s}}$ and the intervals $I_{\tfrac{M_1-1}{2},l_2}$ considered
above.

\begin{center}
\scalebox{1} 
{
\begin{pspicture}(0,-2.6459374)(13.4,1.64)
\definecolor{color1468}{rgb}{0.0,0.6,0.8}
\definecolor{color1472}{rgb}{1.0,0.0,0.2}
\definecolor{color1477}{rgb}{0.4,1.0,0.0}
\definecolor{color1479}{rgb}{0.2,1.0,0.0}
\psline[linewidth=0.04cm](1.96,0.44)(5.24,0.44)
\psline[linewidth=0.04cm](7.44,0.42)(10.72,0.42)
\psline[linewidth=0.04cm,linestyle=dashed,dash=0.16cm
0.16cm](5.6,0.42)(6.98,0.42)
\psdots[dotsize=0.24,dotstyle=|](7.58,0.48)
\psdots[dotsize=0.24,dotstyle=|](10.36,0.46)
\psdots[dotsize=0.24,dotstyle=|](4.76,0.48)
\psdots[dotsize=0.24,dotstyle=|](1.94,0.48)
\psarc[linewidth=0.03,linecolor=color1468,arrowsize=0.073cm
2.0,arrowlength=1.4,arrowinset=0.4]{->}(5.26,4.48){4.86}{-122.50566}{-57.52881}
\psline[linewidth=0.04cm,linecolor=color1472](1.96,0.44)(2.34,0.44)
\psline[linewidth=0.04cm,linecolor=color1472](4.6,0.44)(4.76,0.44)
\psline[linewidth=0.04cm,linecolor=color1472](8.56,0.42)(9.16,0.42)
\psline[linewidth=0.04cm,linecolor=color1468](2.38,0.44)(3.36,0.44)
\psline[linewidth=0.04cm,linecolor=color1468](7.58,0.42)(8.56,0.42)
\psline[linewidth=0.04cm,linecolor=color1477](9.2,0.42)(10.34,0.4)
\psline[linewidth=0.04cm,linecolor=color1477](3.4,0.44)(4.56,0.44)
\psarc[linewidth=0.03,linecolor=color1479,arrowsize=0.073cm
2.0,arrowlength=1.4,arrowinset=0.4]{->}(6.76,3.96){4.6}{-129.69267}{-51.797882}
\psarc[linewidth=0.04,linecolor=color1472,arrowsize=0.05291667cm
2.0,arrowlength=1.4,arrowinset=0.4]{<-}(5.48,-3.86){5.48}{51.759083}{128.55324}
\psarc[linewidth=0.04,linecolor=color1472,arrowsize=0.073cm
2.0,arrowlength=1.4,arrowinset=0.4]{<-}(6.75,-2.33){3.49}{51.759083}{127.19101}
\psdots[dotsize=0.198,dotstyle=|](3.38,0.44)
\psdots[dotsize=0.198,dotstyle=|](8.96,0.4)
\psline[linewidth=0.04cm,linecolor=color1468](2.4,-1.14)(3.38,-1.14)
\psline[linewidth=0.04cm,linecolor=color1477](3.42,-1.14)(4.58,-1.14)
\usefont{T1}{ptm}{m}{n}
\rput(2.8075,-0.68){\small $I^{1,g,l}$}
\usefont{T1}{ptm}{m}{n}
\rput(4.0775,-0.7){\small $I^{1,g,r}$}
\psdots[dotsize=0.198,dotstyle=|](3.38,-1.12)
\psdots[dotsize=0.198,dotstyle=|](2.38,-1.14)
\psdots[dotsize=0.198,dotstyle=|](4.58,-1.14)
\usefont{T1}{ptm}{m}{n}
\rput(3.4475,-1.64){\small $I^{1,s}$}
\usefont{T1}{ptm}{m}{n}
\rput(11.8375,-2.48){\small $\IM^1$}
\usefont{T1}{ptm}{m}{n}
\rput(1.9275,0.06){\small $0$}
\psdots[dotsize=0.198,dotstyle=|](4.58,0.44)
\psdots[dotsize=0.198,dotstyle=|](8.58,0.4)
\psdots[dotsize=0.198,dotstyle=|](9.16,0.4)
\psdots[dotsize=0.198,dotstyle=|](2.36,0.46)
\usefont{T1}{ptm}{m}{n}
\rput(4.8975,0.08){\small $\frac 1{M_1}$}
\psline[linewidth=0.04cm,tbarsize=0.07055555cm
5.0,rbracketlength=0.15]{[-)}(1.96,-2.18)(4.76,-2.18)
\psline[linewidth=0.04cm,tbarsize=0.07055555cm
5.0,rbracketlength=0.15]{[-)}(7.54,-2.16)(10.34,-2.14)
\usefont{T1}{ptm}{m}{n}
\rput(3.2875,-2.4){\small $I_1$}
\usefont{T1}{ptm}{m}{n}
\rput(9.0075,-2.42){\small $I_{\frac{M_1-1}{2}}$}
\psline[linewidth=0.04cm,tbarsize=0.07055555cm
5.0,rbracketlength=0.15]{[-)}(10.38,-2.14)(13.38,-2.16)
\psline[linewidth=0.04cm,linestyle=dashed,dash=0.16cm
0.16cm](5.32,-2.16)(6.92,-2.16)
\psline[linewidth=0.04cm,linecolor=color1472](2.0,-1.74)(2.38,-1.74)
\psline[linewidth=0.04cm,linecolor=color1472](4.58,-1.76)(4.74,-1.76)
\psdots[dotsize=0.198,dotstyle=|](4.74,-1.76)
\psdots[dotsize=0.198,dotstyle=|](4.56,-1.76)
\psdots[dotsize=0.198,dotstyle=|](2.38,-1.74)
\psdots[dotsize=0.198,dotstyle=|](1.98,-1.74)
\psline[linewidth=0.02cm,arrowsize=0.05291667cm
2.0,arrowlength=1.4,arrowinset=0.4]{<-}(2.5,-1.78)(3.04,-1.68)
\psline[linewidth=0.02cm,arrowsize=0.05291667cm
2.0,arrowlength=1.4,arrowinset=0.4]{->}(3.82,-1.7)(4.42,-1.78)
\end{pspicture}
} \medskip
    \textit{Fig.\,5}.\quad $\tau_2$ for the ``singular"
indices on the left side.
 \end{center}
\textit{In this drawing, the interval $I^{1,g,l}$ is the union of
the intervals $I_{1,k_2}$ with $k_2\in J^{1,g,l}.$ A similar
convention holds for $I^{1,g,r}$ and $I^{1,s}$ (which is not an
interval anymore). }
\par\medskip

For each such $\tau_2$ we obtain, for $x\in I_{1,k_2},k_2\in
J^{1,s}$, from Lemma \ref{L2.2}
\begin{equation}\label{K19}
\begin{split}
\phi^2(x)-\phi^2 (T^{(\tau_2)}_{\alpha_2}(x))&\le
-\frac{m_2}{2M_1} +10M^3_1 +2\frac{(M_1-3)(M_1-1)}{2}4M^2_1
\\ &\le -\frac{m_2}{2M_1} +20M^4_1. \qquad \hspace{4.2cm}
\end{split}
\end{equation}
Indeed, the leading term $\tfrac{-m_2}{2M_1}$ and the first error
term $10M^3_1$ in the first line above  come from Lemma
\ref{L2.2}-(ii) when comparing the difference of the value of
$\phi^2$ on the interval $I_{1,1}$ to that of
$I_{\tfrac{M_1-1}{2}, \tfrac{m_2+1}{2}}$. For the difference of
the value of $\phi^2$ on $I_{1,k_2}$ and
$I_{\tfrac{M_1-1}{2},l_2}$, for arbitrary $k_2\in J^{1,s}$ and
$l_2\in\{ \tfrac{m_2+1}{2}-\tfrac{(M_1-3)(M_1-1)}{2},\dots
,\tfrac{m_2+1}{2}+\tfrac{(M_1-3)(M_1-1)}{2}\}$ we apply for both
cases at most $\tfrac{(M_1-3)(M_1+1)}{2}$ times estimate (i) of
Lemma \ref{L2.2} which gives  \eqref{K19}.
\\
In particular, for $m_2>40M^5_1$, which of course we shall assume,
we have that
$$\phi^2(x)-\phi^2 (T^{(\tau_2)}_{\alpha_2}(x)) \le 0, \ \qquad \mbox{for} \ x\in I_{1,k_2}, k_2\in J^{1,s}.$$
There are $M_1(M_1-3) =M^2_1-3M_1$ many intervals $I_{1,k_2}$ with
$k_2\in J^{1,s}$ each of length $1/M_2.$ Hence we may estimate the
``singular mass'' on the interval $I_1$ by
\begin{align}
\label{K21}
\begin{split}
\sum\limits_{k_2\in J^{1,s}} \int_{I_{1,k_2}}[ \phi^2(x)-\phi^2
(T^{(\tau_2)}_{\alpha_2}(x))]\, dx
 & \le \big( - \frac{m_2}{2M_1} +20M^4_1\big) (M^2_1 -3M_1)\frac1{M_2}
\\ & \le -\frac12 +\frac{3}{2M_1} +  \frac{c(M_1)}{2m_2}.
\end{split}
\end{align}
where $c(M_1)$ is a constant depending on $M_1$ only.\footnote{We shall find it convenient in the sequel to write $c(M_1,M_2,\ldots ,M_i)$ for constants
depending only on the choice of the numbers $M_1,M_2,\ldots ,M_i$. The concrete numerical value of this expression may change, i.e.~become bigger, from one
line of reasoning to the next one, but at every stage it will be clear that an explicit bound for the respective meaning of the constant $c(M_1,M_2,\ldots ,M_i)$
could be given, at least in principle. In fact, we shall always have that the constants $c(M_1,M_2,\ldots ,M_i)$ used in the sequel are dominated by a
polynomial in the variables $M_1,M_2,\ldots ,M_i.$}

We still have another ``singular'' interval at the present
induction step $n=2$, namely $I_{M_1}$. The analysis for this case
is symmetric to the analysis of $I_1$ and -- after properly
defining $\tau_2$ on this interval $I_{M_1}$ -- we arrive at the
same estimate \eqref{K21}. In total, the thus obtain \eqref{p18}
by doubling the right hand side of \eqref{K21}, showing that the
``singular mass'' essentially equals $-1.$

Finally define the sets $J^g_2$ (resp.~$J^s_2$) of ``good'' (resp.~``singular'') indices at level 2 as
\begin{align*}
\begin{split}
J^g_2&=\{(k_1,k_2):(k_1\in J^g_1 \ \mbox{and}  \ 1\le k_2\le m_2), \  \mbox{or}
 \ (k_1\in J^s_1 \ \mbox{and} \ k_2\in J^{k_1,g})\},  \\
 J^s_2&=\{(k_1,k_2): k_1\in J^s_1 \mbox{ and} \ k_2\in J^{k_1,2}\}.
\end{split}
\end{align*}

This finishes the inductive step for $n=2$.

\vskip6pt
\noindent{\bf General inductive step.}\
    Suppose that the prime
numbers $m_1, \dots ,m_{n-1}$ have been defined. We use the
notation $\alpha_{n-1}=\tfrac{1}{M_1}+\dots +\tfrac{1}{M_{n-1}}$,
where $M_{n-1}=m_1\cdot m_2\cdot \ldots \cdot m_{n-1}.$
\\
For a prime $m_n$ satisfying the condition of Lemma
\ref{RelPrimeLemma}, and to be specified below, let
$M_n=m_1\cdot\ldots \cdot m_n$ and
$$
    L^n=\left[0,\frac12 -\frac{1}{2M_n}\right),\
    R^n=\left[\frac12 +\frac{1}{2M_n}, 1\right),\
    I^n_\text{middle} =\left[\frac12-\frac{1}{2M_n}, \frac12 +\frac{1}{2M_n}\right).
$$
For $1\le k_1\le m_1,\dots ,1\le k_n\le m_n,$ let
$$I_{k_1,\dots ,k_n} = [\tfrac{k_1-1}{M_1}+\tfrac{k_2-1}{M_2}+\dots +\tfrac{k_n-1}{M_n}\ ,
\tfrac{k_1-1}{M_1}+\tfrac{k_2-1}{M_2}+\dots +\tfrac{k_n}{M_n}).
$$
For $x\in I_{1,\dots ,1}$ and $j\in\{0,\dots ,M_n\}$ we define,
similarly as in \eqref{21}, $\phi^n(x)=0$ and
\begin{align}\label{istep}
\begin{split}
\phi^n(T^j_{\alpha_n}(x))=\quad &\#\{i\in \{0,\ldots,j-1\}:
T_{\alpha_2}^i(x) \in L^n \} \hspace{0.15cm}
\\   -  &\#\{i\in \{0,\ldots,j-1\}: T_{\alpha_2}^i(x) \in R^n \},
\end{split}
\end{align}
where $\alpha_n=\alpha_{n-1}+\tfrac{1}{M_n}$ and $M_n=M_{n-1}
m_n.$ We also let $\psi^n(x)=1-\phi^n(x),$ for $x\in[0,1)$.

\begin{lemma}[Oscillations of $\phi^n$]\label{res-1}
For given $M_1,\dots ,M_{n-1}$ there is a constant $c(M_1,\dots
,M_{n-1})$ depending only on $M_1,\dots ,M_{n-1}$, such that for
all $m_n$ as above we have
\begin{enumerate}
    \item $|\phi^n(x)-\phi^n(x\oplus \tfrac{1}{M_n})|\le c(M_1,\dots ,M_{n-1}),$

    \item for each $1\le k'_1, k''_1 \le M_1,\dots ,1\le k'_{n-1},k''_{n-1}\le m_{n-1},$
$$
    {\phi^n}_{| I_{k'_1,\dots ,k'_{n-1},(m_n+1)/2}}
    -{\phi^n}_{|_{I_{k''_1,\dots ,k''_{n-1},1}}} \geq \tfrac{m_n}{2M_{n-1}}-c(M_1,\dots ,M_{n-1}),
$$
    \item for each $1\le k'_1, k''_1 \le M_1,\dots ,1\le
k'_{n-1},k''_{n-1}\le m_{n-1},$ and $1\le k'_n,k''_n\le m_n,$ with
$\min \{k'_n,m_n-k'_n\}< M_{n-1}$ and $\min\{k''_n,m_n-k''_n\}
<M_{n-1}$ we have
\begin{align}\label{I3alpha}
\left|{\phi^n}_{|_{I_{k'_1,\dots
,k'_{n-1},k'_n}}}-{\phi^n}_{|_{I_{k''_1,\dots
,k''_{n-1},k''_n}}}\right|\le c(M_1,\dots ,M_{n-1}).
\end{align}
\end{enumerate}
\end{lemma}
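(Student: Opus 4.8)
\begin{sketchofproof}{}
The plan is to push the proof of Lemma \ref{L2.2} (the case $n=2$) through with $M_1,M_2,m_2$ replaced by $M_{n-1},M_n,m_n$ for parts (i) and (ii), and to obtain the new part (iii) by induction on $n$, the bound $|\phi^1|\le M_1$ read off from \eqref{choice} serving as the base. Throughout one uses that $\phi^n$ is constant on each $I_{k_1,\dots,k_n}$, that the orbit $(T^j_{\alpha_n}(x_0))_{j=0}^{M_n-1}$ --- with $x_0$ the left endpoint of $I_{1,\dots,1}$ --- meets each such interval exactly once, and that the difference of $\phi^n$ on two intervals equals the imbalance (visits to $L^n$ minus visits to $R^n$) of the orbit window joining their indices. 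The key structural fact is that, since $M_{n-1}\alpha_{n-1}=P_{n-1}\in\Z$, one has $T^{M_{n-1}}_{\alpha_n}(x)=x\oplus\tfrac{M_{n-1}}{M_n}=x\oplus\tfrac1{m_n}$: $M_{n-1}$ steps of $T_{\alpha_n}$ amount to a \emph{small} rotation, of order $m_n$.

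For (i) --- the analogue of the \emph{Claim} inside the proof of Lemma \ref{L2.2}, and the heart of the matter --- I would first produce a window length $p_n=m_n+c$, with $1\le c\le M_{n-1}$, for which $T^{p_n}_{\alpha_n}$ is the rotation by $\tfrac{c}{M_n}$: from $P_n=1+m_nP_{n-1}$ one sees that taking $p_n\equiv -P_{n-1}^{-1}\pmod{M_{n-1}}$ forces $p_nP_n\equiv c\pmod{M_n}$, and (using that all $m_i\ge5$ and the congruences of Lemma \ref{RelPrimeLemma}) one checks $\gcd(c,M_{n-1})=1$. The $M_{n-1}$ consecutive orbit windows $W_j=\{T^{jp_n+i}_{\alpha_n}(x):1\le i\le p_n\}$ are then successive spatial translates of one another by $\oplus\tfrac{c}{M_n}$ and cover the whole orbit with an overlap of at most $M_{n-1}c$ points; since translating by $\tfrac{c}{M_n}$ moves at most $O(c)$ of the $p_n$ points of a $W_j$ (which lie in distinct intervals) across an endpoint of $L^n$, the imbalances $d_j$ obey $|d_{j+1}-d_j|=O(c)$ while $|\sum_j d_j|\le M_{n-1}c$, and the telescoping argument of Lemma \ref{L2.2} (were some $d_j$ large, all $d_j$ would share its sign and $|\sum_j d_j|$ would exceed $M_{n-1}c$) forces $|d_j|\le c(M_1,\dots,M_{n-1})$, i.e.\ every $p_n$-window is nearly balanced. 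Finally $x\mapsto x\oplus\tfrac1{M_n}$ equals $T^{Q_n}_{\alpha_n}$ with $Q_n=P_n^{-1}\bmod M_n$, and, just as in the derivation of \eqref{p16}, one exploits $\gcd(c,M_{n-1})=1$ to write $\tfrac1{M_n}\equiv a\tfrac{M_{n-1}}{M_n}+b\tfrac{c}{M_n}\pmod1$ with $|a|,|b|\le M_{n-1}$; the $Q_n$-window then splits into at most $|a|$ windows of length $M_{n-1}$ (imbalance $\le M_{n-1}$ each) and at most $|b|$ windows of length $p_n$ (or their complements, all nearly balanced), and summing the imbalances gives (i).

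Part (ii) copies the proof of Lemma \ref{L2.2}(ii): for $x$ in the left part of a level-$(n-1)$ interval $I_{k_1,\dots,k_{n-1}}$, the window $(T^i_{\alpha_n}(x))_{i=0}^{M_{n-1}-1}$ is --- up to per-step shifts $<\tfrac1{m_n}$ --- a full $\alpha_{n-1}$-period, hence meets $L^n$ one time more often than $R^n$, so $\phi^n$ increases by $1$ along it; as $T^{M_{n-1}}_{\alpha_n}$ pushes $x$ rightwards by $\tfrac1{m_n}$, iterating keeps $x$ in the left part for about $\tfrac{m_n}{2M_{n-1}}$ such blocks, after which one bridges to $I_{\dots,(m_n+1)/2}$ by at most $c(M_1,\dots,M_{n-1})$ applications of (i); the general case $\vec{k'},\vec{k''}$ is again handled by (i). For (iii) I would argue inductively. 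By (i) at level $n-1$ (or \eqref{choice} at the bottom level) $\phi^{n-1}$ takes values in an interval of length $\le c(M_1,\dots,M_{n-1})$ --- there being only $M_{n-1}$ level-$(n-1)$ intervals and $\phi^{n-1}$ returning to its value after a full turn --- so $|\phi^{n-1}|\le c(M_1,\dots,M_{n-1})$. Next, for $k_n-1<M_{n-1}$, applying (i) along the fewer than $M_{n-1}$ successive $\tfrac1{M_n}$-translates joining $I_{\vec k,1}$ to $I_{\vec k,k_n}$ gives $|{\phi^n}_{|I_{\vec k,k_n}}-{\phi^n}_{|I_{\vec k,1}}|\le M_{n-1}c(M_1,\dots,M_{n-1})$, and symmetrically with $I_{\vec k,m_n}$ in place of $I_{\vec k,1}$ when $k_n$ is within $M_{n-1}$ of $m_n$. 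It then remains to bound ${\phi^n}_{|I_{\vec k,1}}$ and ${\phi^n}_{|I_{\vec k,m_n}}$ uniformly in $\vec k$; one identifies them with ${\phi^{n-1}}_{|I_{\vec k}}$ up to $c(M_1,\dots,M_{n-1})$, the discrepancy accruing only from the $O(1)$-per-$\alpha_{n-1}$-period disagreements between $L^n,R^n$ and $L^{n-1},R^{n-1}$ on the two halves of $\IM^{n-1}$ and from the per-step wiggle between the $\alpha_n$- and $\alpha_{n-1}$-orbits near the endpoints of $\IM^{n-1}$, and then invokes the bound on $\phi^{n-1}$.

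The main obstacle is the imbalance estimate in (i): selecting $p_n=m_n+c$ by the right congruence \emph{compatibly} with the constraints Lemma \ref{RelPrimeLemma} already imposes on $m_n$, carrying out the ``shifted copies cover the orbit with $O(M_{n-1})$ overlap'' bookkeeping, and --- above all --- verifying that the coefficients in $\tfrac1{M_n}\equiv a\tfrac{M_{n-1}}{M_n}+b\tfrac{c}{M_n}$, and the lengths of the resulting sub-windows counted with wraparound, stay bounded by a polynomial in $M_1,\dots,M_{n-1}$ rather than growing with $m_n$; this is precisely the point at which the prime $m_n$ must be chosen judiciously. The $\phi^n$-versus-$\phi^{n-1}$ comparison underpinning (iii) is routine once the list of discrepancies is in place, but calls for care near the endpoints of $\IM^{n-1}$.
\end{sketchofproof}
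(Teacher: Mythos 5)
Your treatment of parts (i) and (ii) follows the paper closely: the same observations that $T^{M_{n-1}}_{\alpha_n}$ is a shift by $1/m_n$ and that a window of length roughly $m_n$ achieves a shift of order $1/M_n$, the same decomposition of the $1/M_n$-shift window into nearly-balanced pieces, and the same telescoping argument to control each piece's imbalance. (The paper builds its long window as $r_nM_{n-1}+q_{n-1}$ with $r_n=\lfloor m_n/M_{n-1}\rfloor$ and iterates to sweep through all residues $l/M_n$, $1\le l\le M_{n-1}-1$, rather than pinning down a $p_n=m_n+c$ with $\gcd(c,M_{n-1})=1$, but these are the same idea.) Part (ii) is likewise a direct transcription of the $n=2$ case.

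For part (iii), however, your route diverges from the paper's and, as sketched, has a genuine gap. The paper argues directly: choose $0\le q_{n-1}<M_{n-1}$ so that $T^{q_{n-1}}_{\alpha_{n-1}}$ maps $I_{k'_1,\dots,k'_{n-1}}$ to the right neighbour of $I_{k''_1,\dots,k''_{n-1}}$; then $T^{q_{n-1}}_{\alpha_n}$ carries $I_{k'_1,\dots,k'_{n-1},k'_n}$ to a level-$n$ interval within distance $3M_{n-1}/M_n$ of $I_{k''_1,\dots,k''_{n-1},k''_n}$; the $\phi^n$-change along those $q_{n-1}<M_{n-1}$ steps is trivially at most $M_{n-1}$, and the remaining gap is bridged by at most $3M_{n-1}$ applications of (i). No comparison between $\phi^n$ and $\phi^{n-1}$ is needed. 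Your proposal instead rests on the identification $\phi^n\vert_{I_{\vec k,1}}=\phi^{n-1}\vert_{I_{\vec k}}+O\bigl(c(M_1,\dots,M_{n-1})\bigr)$, justified by ``$O(1)$-per-$\alpha_{n-1}$-period disagreements between $L^n,R^n$ and $L^{n-1},R^{n-1}$.'' This does not establish the claim: the orbit segment of $T_{\alpha_n}$ that defines $\phi^n\vert_{I_{\vec k,1}}$ has length on the order of $M_n$ — it consists of roughly $m_n$ consecutive $\alpha_{n-1}$-periods — whereas the orbit segment of $T_{\alpha_{n-1}}$ that defines $\phi^{n-1}\vert_{I_{\vec k}}$ has length $<M_{n-1}$. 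An $O(1)$ discrepancy per period would therefore a priori accumulate to an error of order $m_n$, which is not dominated by any $c(M_1,\dots,M_{n-1})$; the two orbit windows are not term-by-term comparable, and the step lengths differ by a factor $\approx m_n$. The identification may well be true (small-case computation suggests it is), but proving it requires exhibiting the cancellation over those $\approx m_n$ periods, which your sketch does not do — and which is exactly the difficulty the paper's $q_{n-1}$-bridge argument sidesteps.
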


\begin{proof}{}
We may and do assume that $m_n\geq 5M_{n-1}.$

\noindent$\bullet$\textrm{ Proof of (i).}\
    We have $T_{\alpha_n}(x)=T_{\alpha_{n-1}}(T_{1/M_n}(x))$
so that
\begin{align}\label{I3a}
T_{\alpha_n}^{M_{n-1}}(x)=x\oplus \tfrac{M_{n-1}}{M_n}=x\oplus \tfrac{1}{m_n},
\end{align}
in perfect analogy to \eqref{K2}. As regards the analogue to
\eqref{K3} things now are somewhat more complicated. First note
that there is a unique number $1\le q_{n-1}\le M_{n-1}-1$ such
that
\begin{align}\label{I3b}
T^{q_{n-1}}_{\alpha_{n-1}}(x)=x\ominus \frac{1}{M_{n-1}}, \qquad x\in[0,1).
\end{align}
Indeed, by Lemma \ref{RelPrimeLemma}, when $q_{n-1}$ runs through
$\{1,\dots ,M_{n-1}-1\}$, the left hand side assumes the values
$x\ominus\tfrac{l_{n-1}}{M_{n-1}}$, where $l_{n-1}$ also runs
through $\{1,\dots ,M_{n-1}-1\}$.

\medskip
\noindent{\it Claim: Letting
$r_n=\lfloor\tfrac{m_n}{M_{n-1}}\rfloor$, the integer part of
$\tfrac{m_n}{M_{n-1}}$, and taking $q_{n-1}$ as in \eqref{I3b}, we
have
$$T^{r_n M_{n-1}+q_{n-1}}_{\alpha_n}(x)=x\oplus \frac{d_{n-1}}{M_n},$$
where $|d_{n-1}|<M_{n-1}.$}

\medskip\noindent
    Indeed, write $m_n$ as $m_n=r_nM_{n-1}+e_{n-1},$ for some $1\le e_{n-1} \le M_{n-1}$ to obtain
\begin{align*}
T^{r_n M_{n-1}+q_{n-1}}_{\alpha_n}(x) & = (T^{M_{n-1}}_{\alpha_n})^{r_n} \circ T^{q_{n-1}}_{\alpha_{n-1}} \circ T^{q_{n-1}}_{\tfrac{1}{M_n}} (x)
\\ & =x\oplus r_n\frac{M_{n-1}}{M_n}\ominus \frac{1}{M_{n-1}}\oplus \frac{q_{n-1}}{M_n}
\\ & =x\oplus \frac{m_n}{M_n} \ominus \frac{e_{n-1}}{M_n}\ominus\frac{1}{M_{n-1}}\oplus \frac{q_{n-1}}{M_n}
\\ & =x\oplus \frac{q_{n-1} - e_{n-1}}{M_n} =: x\oplus \frac{d_{n-1}}{M_n}
\end{align*}
which proves the claim.
\\
Define $s^{(1)}_{n-1}=q_{n-1}$ if $d_{n-1}=q_{n-1}-e_{n-1}
>0$ and $s^{(1)}_{n-1}=q_{n-1}+M_{n-1}$ otherwise, to obtain by
\eqref{I3a} and \eqref{I3b} that
$$T_{\alpha_n}^{r_nM_{n-1}+s^{(1)}_{n-1}}(x)=x\oplus \tfrac{l^{(1)}_{n-1}}{M_n},$$
for some $l^{(1)}_{n-1} \in \{1,\dots ,M_{n-1}\}.$ We also deduce
from \eqref{I3a} that $l^{(1)}_{n-1}$ must actually be in
$\{1,\dots ,M_{n-1}-1\}.$
\\
Repeat the above argument to find $s^{(2)}_{n-1}$ with $-2M_{n-1} < s^{(2)}_{n-1} <2M_{n-1}$ such that
$$T_{\alpha_n}^{2r_n M_{n-1}+s^{(2)}_{n-1}}(x)=x\oplus \tfrac{l_{n-1}^{(2)}}{M_n},$$
for some $l_{n-1}^{(2)}\in\{1,\dots ,M_{n-1}-1\}$. Continuing in the same way, we find numbers $s^{(j)}_{n-1}$, for $j=1,2,\dots ,M_{n-1}-1$ verifying
$-jM_{n-1}<s^{(j)}_{n-1} <j M_{n-1}$ such that
\begin{align}\label{I3bb}
T_{\alpha_n}^{jr_nM_{n-1}+s^{(j)}_{n-1}}(x)=x\oplus \tfrac{l^{(j)}_{n-1}}{M_n},
\end{align}
for some $l^{(j)}_{n-1}\in\{1,\dots ,M_{n-1}-1\}.$ Note that,
under the assumption $m_n \gg M_{n-1}$ so that $r_n \gg M_{n-1},$
 the elements in \eqref{I3bb} are all different. Therefore
$(l^{(j)}_{n-1})^{M_{n-1}-1}_{j=1}$ runs through all elements of
$\{1,\dots ,M_{n-1}-1\}$ when $j$ runs through $\{1,\dots
,M_{n-1}-1\}$; in particular there must be some $j_0$ such that
$$T_{\alpha_n}^{j_0 r_nM_{n-1}+s^{(j_0)}_{n-1}}(x)=x\oplus \tfrac{1}{M_n},$$
in analogy to \eqref{p16}.

Now observe that there is a constant $c(M_1,\dots , M_{n-1})$,
depending only on $M_1,\dots ,M_{n-1},$ such that, for $x\in
[0,1)$, the difference of the number of visits of the orbit
$(T^i_{\alpha_n}(x))^{r_n M_{n-1}+q_{n-1}}_{i=0}$ to $L^n$ and
$R^n$ is bounded in absolute value by the constant $c(M_1,\dots ,
M_{n-1})$. The argument is analogous to the corresponding one in
the proof of the claim which is part of the proof of  Lemma
\ref{L2.2}-(i), and therefore skipped.
\\
The numbers $j_0$ as well as $s^{(j_0)}_{n-1}$ are bounded in
absolute value by $M_{n-1}^2$ so that the difference of the visits
of the orbits $(T_{\alpha_n}^i (x))_{i=0}^{j_0
r_nM_{n-1}+s^{(j_0)}_{n-1}}$ to $L^n$ and $R^n$ are bounded in
absolute value by some constant $c(M_1,\dots ,M_{n-1})$. This
finishes the proof of assertion (i).

\medskip\noindent$\bullet$\textrm{ Proof of (ii).}\
    Suppose first, as in the proof of Lemma \ref{L2.2}-(ii), that
$(k'_1,\dots ,k'_{n-1})=(k''_1,\dots ,k''_{n-1})=:(k_1,\dots
,k_{n-1}).$ For $x\in I_{k_1,\dots ,k_{n-1},1}$ we have that each
of the orbits $(T_{\alpha_n}^{jM_{n-1}+i}(x))^{M_{n-1}-1}_{i=0}$,
for $j=0,\dots ,\lfloor\tfrac{m_n}{2M_{n-1}}\rfloor-1$ visits
$L^n$ one time more often than $R^n$. Hence
$$
\phi^n(T_{\alpha_n}^{\lfloor\tfrac{m_n}{2M_{n-1}}\rfloor
M_{n-1}}(x))-\phi^n(x)=\lfloor\tfrac{m_n}{2M_{n-1}}\rfloor>
\tfrac{m_n}{2M_{n-1}}-1.
$$
Noting that
$$
T_{\alpha_n}^{\lfloor\tfrac{m_n}{2M_{n-1}}\rfloor M_{n-1}}(x)=x\oplus \lfloor\tfrac{m_n}{2M_{n-1}}\rfloor \tfrac{M_{n-1}}{M_n}
$$
and
$$
\tfrac{m_n+1}{2M_n} - \lfloor\tfrac{m_n}{2M_{n-1}}\rfloor\tfrac{M_{n-1}}{M_n} \le \tfrac{M_{n-1}}{M_n},
$$
we obtain (ii) by using assertion (i), and possibly passing to a bigger constant $c(M_1,\dots ,M_{n-1})$.
\\
Finally the passage to general $(k'_1,\dots ,k'_{n-1})$ and
$(k''_1,\dots ,k''_{n-1})$ is done again, similarly as in the
proof of Lemma \ref{L2.2}, by repeated application of (i) and by
passing once more to a bigger constant $c(M_1,\dots ,M_{n-1})$.

\medskip\noindent$\bullet$\textrm{ Proof of (iii).}\
    Fix
$1\le k'_1,k''_1\le M_1,\dots ,1\le k'_{n-1}, k''_{n-1}\le
m_{n-1}$ and $1\le k'_n,k''_n\le m_n$ as above. Suppose, e.g.,
$k'_n\le M_{n-1}$ and $m_n-k''_n\le M_{n-1}$, the other three
cases being similar. Denote by $(k'''_1,\dots ,k'''_{n-1})$ the
index so that $I_{k'''_1,\dots ,k'''_{n-1}}= I_{k''_1,\dots
,k''_{n-1},k''_n}\oplus \tfrac{1}{M_{n-1}}$,
i.e.~$I_{k'''_1,\ldots ,k'''_{n-1}}$ is the right neighbour of
$I_{k''_1,\dots ,k''_{n-1}}.$ Now find $0\le q_{n-1} <M_{n-1}$
such that $T^{q_{n-1}}_{\alpha_{n-1}}$ maps $I_{k'_1,\dots
,k'_{n-1}}$ onto $I_{k'''_1,\dots ,k'''_{n-1}}$. Hence
$T^{q_{n-1}}_{\alpha_{n}}$ maps $I_{k'_1,\dots ,k'_{n-1},k'_n}$
onto $I_{k'''_1,\dots ,k'''_{n-1},k'_n+q_{n-1}}.$
\\
Finally note that the distance from the latter interval to $I_{k''_1,\dots ,k''_{n-1},k''_n}$ is bounded by $(2M_{n-1}+M_{n-1})\tfrac{1}{M_n}.$ Hence we
obtain \eqref{I3alpha} by applying $2M_{n-1}+M_{n-1}$ times assertion (i) and using $0\le q_{n-1}<M_{n-1}.$
\end{proof}

After this preparation we are ready for the inductive step from
$n-1$ to $n.$ Suppose that the following inductive hypotheses are
satisfied, for $1\le l\le n-1,$ functions $\tau_l :[0,1)\to
\{-M_l+1,\dots ,M_l-1\}$ and index sets $J^g_l,J^s_l$ contained in
$\{(k_1,\ldots ,k_l):1\le k_1\le m_1,\ldots , 1\le k_l \le m_l\}.$

\begin{enumerate}
\item
The measure preserving bijection $T_{\alpha_{n-1}}^{(\tau_{n-1})}:[0,1)\to [0,1)$ maps the intervals $I_{k_1,\dots ,k_l}$, for $1\le l <n-1$, and
$1\le k_1\le m_1, \ldots ,
1\le k_l \le m_l$, onto the intervals $T^{(\tau_l)}_{\alpha_l} (I_{k_1,\dots ,k_l}).$ It induces a permutation of the intervals
$I_{k_1,\dots ,k_{n-1}}$, where $1\le k_1\le m_1 ,\ldots ,1\le k_{n-1} \le m_{n-1}.$

    \item
When $\tau_{n-1}(x) >0$, we have
\begin{align}\label{B1}
T^i_{\alpha_{n-1}}(x) \notin I^{n-1}_{\text{middle}}, \qquad
i=0,\ldots ,\tau_{n-1}(x),
\end{align}
and, when $\tau_{n-1}(x)<0$, we have
\begin{align}\label{B1a}
T^i_{\alpha_{n-1}}(x)\notin I^{n-1}_{\text{middle}}, \qquad
i=\tau_{n-1}(x),\ldots ,0.
\end{align}

    \item There is a set of ``good'' indices
$J^g_{n-1} \subseteq \{1\le k_1 \le m_1,\ldots ,1\le k_{n-1}\le
m_{n-1}\}.$ For $(k_1,\ldots ,k_{n-2})\in J^g_{n-2}$ we have that
$(k_1,\ldots ,k_{n-2}, k_{n-1})\in J^g_{n-1}$ as well as
\begin{align}\label{C1}
\mu[I_{k_1,\ldots ,k_{n-2}} \cap \{\tau_{n-2}\neq \tau_{n-1}\}]\leq \tfrac{M_{n-2}}{m_{n-1}} \mu [I_{k_1,\ldots ,k_{n-2}}],
\end{align}
and
\begin{align}\label{I4}
\begin{split}
    \sum_{(k_1,\dots ,k_{n-2})\in J^g_{n-2}} & \int_{I_{k_1,\dots ,k_{n-2}}}
    \Big|[\phi^{n-2}(x)-\phi^{n-2}(T^{(\tau_{n-2})}_{\alpha_{n-2}}(x))]\\
   & \hskip 3cm -[\phi^{n-1}(x)-\phi^{n-1} (T^{(\tau_{n-1})}_{\alpha_{n-1}}(x))] \Big|
    \, dx\\
    & \hskip 1cm\le \tfrac{c(M_1,\dots ,M_{n-2})}{m_{n-1}}.
\end{split}
\end{align}

    \item
There is a set of ``singular'' indices $J_{n-1}^s\subseteq \{(k_1,\dots ,k_{n-1}):1\le k_1\le m_1,\dots ,1\le k_{n-1}\le m_{n-1}\}$, disjoint from $J_{n-1}^g$, such
that $J^s_{n-1}$ consists of less than $2M^2_{n-1}$ many elements and such that
\begin{align}\label{I5a}
\begin{split}
\phi^{n-1}(x)-\phi^{n-1}(T_{\alpha_{n-1}}^{(\tau_{n-1})}(x)) \ \le \ 0, \quad \mbox{for }&  x\in I_{k_1,\dots ,k_{n-1}}\\
&\mbox{and} \ (k_1,\dots ,k_{n-1}) \in J^s_{n-1},
\end{split}
\end{align}
and
\begin{align}\label{I5b}
\begin{split}
\sum\limits_{(k_1,\dots ,k_{n-1})\in J^s_{n-1}}\ \ \ \int\limits_{I_{k_1,\dots ,k_{n-1}}}
    [\phi^{n-1}&(x)-\phi^{n-1}(T_{\alpha_{n-1}}^{(\tau_{n-1})}(x))]\, dx\\
    &\le -1+\tfrac{3}{m_1} +\tfrac{c(M_1)}{m_2} + \dots +\tfrac{c(M_1,\dots ,M_{n-2})}{m_{n-1}},
\end{split}
\end{align}
where $c(\cdot)$ are constants depending only on $(\cdot)$.
\item
On the middle interval $I^1_{\text{middle}} =I^1_{\tfrac{M_1+1}{2}}$ we have $\tau_1=\tau_2=\dots =\tau_{n-1}=0$ and $I^1_\text{middle}$ together with the
intervals $(I_{k_1,\dots ,k_{n-1}})_{(k_1,\dots ,k_{n-1})\in J^g_{n-1} \cup J^s_{n-1}}$ form a partition of $[0,1)$.
\end{enumerate}
We have to define $\tau_n$ as well as $J^g_n$ and $J^s_n$ so that the above list is satisfied with $n-1$ replaced by $n$.

Let us illustrate graphically some  features of this construction.
Namely, the fractal structure of the singular set and the
resulting quasi-cost.
\par\medskip
\begin{center}
\scalebox{1} 
{
\begin{pspicture}(0,-1.805)(13.282187,1.805)
\definecolor{color2624b}{rgb}{0.4,1.0,1.0}
\definecolor{color2626b}{rgb}{1.0,0.0,0.2}
\psframe[linewidth=0.02,dimen=outer,fillstyle=solid,fillcolor=color2624b](6.2703233,0.9009375)(1.4621875,0.4809375)
\psframe[linewidth=0.02,dimen=outer,fillstyle=solid,fillcolor=color2624b](13.282187,0.9009375)(8.462188,0.4609375)
\psframe[linewidth=0.02,dimen=outer,fillstyle=solid,fillcolor=color2626b](3.6421876,0.9009375)(1.4421875,0.5009375)
\psframe[linewidth=0.02,dimen=outer,fillstyle=solid,fillcolor=color2626b](13.262188,0.8809375)(11.062187,0.4809375)
\psframe[linewidth=0.02,dimen=outer,fillstyle=solid,fillcolor=color2624b](6.2421875,0.3209375)(1.4421875,-0.0990625)
\psframe[linewidth=0.02,dimen=outer,fillstyle=solid,fillcolor=color2624b](13.2421875,0.3209375)(8.442187,-0.0990625)
\psframe[linewidth=0.02,dimen=outer,fillstyle=solid,fillcolor=color2624b](6.2421875,-0.2790625)(1.4421875,-0.6990625)
\psframe[linewidth=0.02,dimen=outer,fillstyle=solid,fillcolor=color2624b](13.2421875,-0.2790625)(8.442187,-0.6990625)
\psframe[linewidth=0.02,dimen=outer,fillstyle=solid,fillcolor=color2624b](6.2421875,-0.8990625)(1.4421875,-1.3190625)
\psframe[linewidth=0.02,dimen=outer,fillstyle=solid,fillcolor=color2624b](13.2421875,-0.8990625)(8.442187,-1.3190625)
\psframe[linewidth=0.02,dimen=outer,fillstyle=solid,fillcolor=red](1.8821875,0.3209375)(1.4621875,-0.0990625)
\psframe[linewidth=0.02,dimen=outer,fillstyle=solid,fillcolor=red](3.6821876,0.3009375)(3.1021874,-0.0990625)
\psframe[linewidth=0.02,dimen=outer,fillstyle=solid,fillcolor=red](11.622188,0.3209375)(11.042188,-0.0990625)
\psframe[linewidth=0.02,dimen=outer,fillstyle=solid,fillcolor=red](13.262188,0.3209375)(12.842188,-0.0990625)
\psframe[linewidth=0.02,dimen=outer,fillstyle=solid,fillcolor=red](1.5421875,-0.2790625)(1.4221874,-0.6990625)
\psframe[linewidth=0.02,dimen=outer,fillstyle=solid,fillcolor=red](1.8221875,-0.2790625)(1.6821876,-0.6990625)
\psframe[linewidth=0.02,dimen=outer,fillstyle=solid,fillcolor=red](3.2221875,-0.2790625)(3.1021874,-0.6790625)
\psframe[linewidth=0.02,dimen=outer,fillstyle=solid,fillcolor=red](3.6421876,-0.2790625)(3.5221875,-0.6990625)
\psframe[linewidth=0.02,dimen=outer,fillstyle=solid,fillcolor=red](11.142187,-0.2790625)(11.022187,-0.6990625)
\psframe[linewidth=0.02,dimen=outer,fillstyle=solid,fillcolor=red](11.602187,-0.2790625)(11.482187,-0.6990625)
\psframe[linewidth=0.02,dimen=outer,fillstyle=solid,fillcolor=red](12.982187,-0.2790625)(12.862187,-0.6990625)
\psframe[linewidth=0.02,dimen=outer,fillstyle=solid,fillcolor=red](13.2421875,-0.2790625)(13.122188,-0.6990625)
\usefont{T1}{ptm}{m}{n}
\rput(0.5496875,0.7209375){\small $n=1$}
\usefont{T1}{ptm}{m}{n}
\rput(0.5296875,0.1209375){\small $n=2$}
\usefont{T1}{ptm}{m}{n}
\rput(0.5496875,-0.4790625){\small $n=3$}
\usefont{T1}{ptm}{m}{n}
\rput(0.5296875,-1.0790625){\small $n=4$}
\psline[linewidth=0.04cm,tbarsize=0.07055555cm
5.0,rbracketlength=0.15]{|-)}(1.4421875,1.3009375)(3.6421876,1.3009375)
\psline[linewidth=0.04cm,tbarsize=0.07055555cm
5.0,rbracketlength=0.15]{|-)}(6.2421875,1.2809376)(8.442187,1.2809376)
\psline[linewidth=0.04cm,tbarsize=0.07055555cm
5.0,rbracketlength=0.15]{|-)}(11.042188,1.3209375)(13.2421875,1.3209375)
\psline[linewidth=0.04cm,linestyle=dashed,dash=0.16cm
0.16cm](3.8021874,1.3009375)(5.9821873,1.2809376)
\psline[linewidth=0.04cm,linestyle=dashed,dash=0.16cm
0.16cm](8.662188,1.3009375)(10.902187,1.3009375)
\usefont{T1}{ptm}{m}{n}
\rput(2.4296875,1.6009375){\small $I_1$}
\usefont{T1}{ptm}{m}{n}
\rput(7.2596874,1.5809375){\small $\IM^1$}
\usefont{T1}{ptm}{m}{n}
\rput(12.049687,1.6209375){\small $I_{M_1}$}
\psline[linewidth=0.032cm,linecolor=color2626b](12.882188,-0.8990625)(12.882188,-1.2990625)
\psline[linewidth=0.032cm,linecolor=color2626b](12.962188,-0.8990625)(12.962188,-1.2990625)
\psline[linewidth=0.032cm,linecolor=color2626b](11.022187,-0.8790625)(11.022187,-1.2790625)
\psline[linewidth=0.032cm,linecolor=color2626b](11.082188,-0.8790625)(11.082188,-1.2790625)
\psline[linewidth=0.032cm,linecolor=color2626b](3.1021874,-0.8990625)(3.1021874,-1.2990625)
\psline[linewidth=0.032cm,linecolor=color2626b](3.1621876,-0.8990625)(3.1621876,-1.2990625)
\psline[linewidth=0.032cm,linecolor=color2626b](1.7221875,-0.8990625)(1.7221875,-1.2990625)
\psline[linewidth=0.032cm,linecolor=color2626b](1.8221875,-0.8990625)(1.8221875,-1.2990625)
\psline[linewidth=0.032cm,linecolor=color2626b](1.4421875,-0.9190625)(1.4421875,-1.3190625)
\psline[linewidth=0.032cm,linecolor=color2626b](1.5021875,-0.8990625)(1.5021875,-1.2990625)
\psline[linewidth=0.032cm,linecolor=color2626b](13.162188,-0.8990625)(13.162188,-1.2990625)
\psline[linewidth=0.032cm,linecolor=color2626b](13.222187,-0.8990625)(13.222187,-1.2990625)
\psline[linewidth=0.032cm,linecolor=color2626b](11.502188,-0.8790625)(11.502188,-1.2790625)
\psline[linewidth=0.032cm,linecolor=color2626b](11.562187,-0.8990625)(11.562187,-1.2990625)
\psline[linewidth=0.032cm,linecolor=color2626b](3.5621874,-0.8990625)(3.5621874,-1.2990625)
\psline[linewidth=0.032cm,linecolor=color2626b](3.6221876,-0.8990625)(3.6221876,-1.2990625)
\usefont{T1}{ptm}{m}{n}
\rput(0.5496875,-1.6390625){\small $\vdots$}
\psframe[linewidth=0.04,dimen=outer](6.2303233,-0.8990625)(1.4221874,-1.3190625)
\psframe[linewidth=0.04,dimen=outer](13.250323,-0.8790625)(8.442187,-1.2990625)
\psframe[linewidth=0.04,dimen=outer](13.230323,-0.2790625)(8.422188,-0.6990625)
\psframe[linewidth=0.04,dimen=outer](13.250323,0.3209375)(8.442187,-0.0990625)
\psframe[linewidth=0.04,dimen=outer](13.250323,0.9009375)(8.442187,0.4809375)
\psframe[linewidth=0.04,dimen=outer](6.2703233,0.9009375)(1.4621875,0.4809375)
\psframe[linewidth=0.04,dimen=outer](6.2503233,0.3209375)(1.4421875,-0.0990625)
\psframe[linewidth=0.04,dimen=outer](6.2303233,-0.2790625)(1.4221874,-0.6990625)
\end{pspicture}
}
  \vskip 0.5cm
\textit{Fig.\,6}.\quad The fractal structure of the ``singular''
set.
\end{center}
\textit{For the sake of simplicity of the drawing, the red area
which represents the singular set is thicker than it should be.
Note also that the effective singular set is not perfectly
balanced.}
\medskip
\begin{center}
\scalebox{1} 
{
\begin{pspicture}(0,-3.36)(14.915,3.36)
\definecolor{color852b}{rgb}{0.2,0.8,1.0}
\definecolor{color891b}{rgb}{1.0,0.0,0.4}
\psline[linewidth=0.02cm,linestyle=dotted,dotsep=0.16cm](3.1421876,2.47)(9.462188,2.43)
\psline[linewidth=0.02cm](3.3821876,1.47)(14.422188,1.41)
\psdots[dotsize=0.198,dotstyle=|](3.4221876,1.47)
\psdots[dotsize=0.198,dotstyle=|](14.422188,1.43)
\psline[linewidth=0.04cm](8.402187,1.97)(9.422188,1.97)
\psline[linewidth=0.02cm,linestyle=dotted,dotsep=0.16cm](8.422188,2.43)(8.422188,1.41)
\psline[linewidth=0.02cm,linestyle=dotted,dotsep=0.16cm](9.442187,2.39)(9.442187,1.41)
\psline[linewidth=0.02cm,arrowsize=0.093cm
2.0,arrowlength=1.4,arrowinset=0.4]{->}(3.1021874,-0.47)(3.1421876,3.35)
\psdots[dotsize=0.102,dotstyle=+](3.1221876,1.47)
\psdots[dotsize=0.102,dotstyle=+](3.1221876,2.47)
\psdots[dotsize=0.102,dotstyle=+](3.1221876,1.99)
\usefont{T1}{ptm}{m}{n}
\rput(2.8096876,1.43){\small $0$}
\usefont{T1}{ptm}{m}{n}
\rput(2.8296876,1.99){\small $1$}
\usefont{T1}{ptm}{m}{n}
\rput(2.8096876,2.43){\small $2$}
\usefont{T1}{ptm}{m}{n}
\rput(3.2696874,1.17){\small $0$}
\usefont{T1}{ptm}{m}{n}
\rput(14.609688,1.19){\small $1$}
\usefont{T1}{ptm}{m}{n}
\rput(8.939688,1.05){\small $\IM^1$}
\psdots[dotsize=0.2,dotstyle=|](8.422188,1.45)
\psdots[dotsize=0.2,dotstyle=|](9.442187,1.43)
\psline[linewidth=0.02cm,linestyle=dashed,dash=0.16cm
0.16cm](3.0821874,-0.69)(3.0821874,-1.33)
\psline[linewidth=0.02cm](3.1021874,-1.47)(3.1021874,-3.35)
\psline[linewidth=0.04cm](3.7021875,1.97)(4.2021875,1.97)
\psline[linewidth=0.02cm,linestyle=dotted,dotsep=0.16cm](3.6821876,1.95)(3.6821876,-2.53)
\psline[linewidth=0.02cm,linestyle=dotted,dotsep=0.16cm](4.1821876,1.97)(4.2221875,-2.51)
\psline[linewidth=0.02cm,linestyle=dotted,dotsep=0.16cm](4.3821874,2.45)(4.4221873,-2.51)
\psline[linewidth=0.02cm,linestyle=dotted,dotsep=0.16cm](3.4021876,1.45)(3.4021876,-2.49)
\psline[linewidth=0.02cm,linestyle=dotted,dotsep=0.16cm](13.582188,1.93)(13.582188,-2.55)
\psline[linewidth=0.02cm,linestyle=dotted,dotsep=0.16cm](14.062187,1.93)(14.102187,-2.55)
\psline[linewidth=0.02cm,linestyle=dotted,dotsep=0.16cm](14.402187,1.43)(14.422188,-2.53)
\psline[linewidth=0.02cm,linestyle=dotted,dotsep=0.16cm](13.402187,2.45)(13.402187,-2.51)
\psline[linewidth=0.04cm](13.622188,1.95)(14.122188,1.95)
\psline[linewidth=0.02cm,linestyle=dotted,dotsep=0.16cm](3.1821876,1.99)(13.662188,1.95)
\psdots[dotsize=0.12,dotstyle=+](3.0821874,-2.55)
\usefont{T1}{ptm}{m}{n}
\rput(1.9096875,-2.53){\small $\propto -M_n/M_{n-1}^2$}
\psframe[linewidth=0.02,dimen=outer,fillstyle=solid,fillcolor=color852b](8.402187,2.55)(4.3821874,2.33)
\psframe[linewidth=0.02,dimen=outer,fillstyle=solid,fillcolor=color852b](13.442187,2.53)(9.402187,2.33)
\psframe[linewidth=0.02,dimen=outer,fillstyle=solid,fillcolor=color891b](14.422188,-2.29)(14.302188,-2.93)
\psframe[linewidth=0.02,dimen=outer,fillstyle=solid,fillcolor=color891b](13.482187,-2.29)(13.402187,-2.85)
\psdots[dotsize=0.12,dotstyle=+](3.0821874,-2.55)
\psline[linewidth=0.02cm,fillcolor=color891b,linestyle=dotted,dotsep=0.16cm](4.4421873,-2.55)(13.382188,-2.55)
\psdots[dotsize=0.2,dotstyle=|](4.4221873,1.47)
\psdots[dotsize=0.2,dotstyle=|](13.402187,1.41)
\usefont{T1}{ptm}{m}{n}
\rput(4.6396875,1.17){\small $\frac{1}{M_1}$}
\usefont{T1}{ptm}{m}{n}
\rput(12.869687,1.13){\small $\frac{M_1-1}{M_1}$}
\psframe[linewidth=0.02,dimen=outer,fillstyle=solid,fillcolor=color891b](3.5221875,-2.39)(3.4221876,-2.97)
\psframe[linewidth=0.02,dimen=outer,fillstyle=solid,fillcolor=color891b](14.202188,-2.33)(14.122188,-2.93)
\psframe[linewidth=0.02,dimen=outer,fillstyle=solid,fillcolor=color891b](13.642187,-2.29)(13.562187,-2.87)
\psframe[linewidth=0.02,dimen=outer,fillstyle=solid,fillcolor=color891b](4.4021873,-2.39)(4.3221874,-3.01)
\psframe[linewidth=0.02,dimen=outer,fillstyle=solid,fillcolor=color891b](4.2621875,-2.41)(4.1821876,-2.99)
\psframe[linewidth=0.02,dimen=outer,fillstyle=solid,fillcolor=color891b](3.7021875,-2.39)(3.6221876,-2.95)
\psline[linewidth=0.04cm,fillcolor=color891b](3.4821875,1.99)(3.6021874,1.97)
\psline[linewidth=0.04cm,fillcolor=color891b](4.2621875,1.97)(4.3421874,1.97)
\psline[linewidth=0.04cm,fillcolor=color891b](13.462188,1.95)(13.542188,1.95)
\psline[linewidth=0.04cm,fillcolor=color891b](14.222187,1.95)(14.302188,1.95)
\end{pspicture}
}
 \vskip 0.5cm
\textit{Fig.\,7}.\quad Shape of the quasi-cost
$\varphi^n+\psi^n\circ T_{\alpha_n}^{(\tau_n)}.$
\end{center}
\textit{The strips on this graphic representation symbolize the
oscillations of the function $\varphi^n+\psi^n\circ
T_{\alpha_n}^{(\tau_n)}$. On the``singular'' set, this finction
achieves values of order $-M_n/M_{n-1}^2.$ Of course, the
effective singular set is much more fragmented than it appears on
this figure.}
\medskip

We start with a ``good'' interval $I_{k_1,\dots ,k_{n-1}}$,
i.e.~$(k_1,\dots ,k_{n-1})\in J_{n-1}^g$ and simply write $\tau$
for $\tau_{n-1}|_{I_{k_1,\ldots , k_{n-1}}}.$ If $\tau >0$, define
$J^{k_1,\ldots ,k_{n-1},c}$, where $c$ stands for ``change'', as
$\{m_n -\tau+1,\linebreak \ldots ,m_n\}.$ This set consists of
those indices $k_n$ such that the interval $I_{k_1,\dots ,k_n}$ is
not mapped into $T^{(\tau_{n-1})}_{\alpha_{n-1}} (I_{k_1,\dots
,k_{n-1}})$  under $T^{(\tau_{n-1})}_{\alpha_n}$. If $\tau <0$, we
define $J^{k_1,\dots ,k_{n-1},c}$ as $\{1,\ldots ,|\tau|\}.$ The
complement $\{1,\ldots ,m_n\}\backslash J^{k_1,\dots ,k_{n-1},c}$
is denoted by $J^{k_1,\dots ,k_{n-1},u}$, where $u$ stands for
``unchanged''.

Define $\tau_n:=\tau_{n-1}=\tau$ on the intervals $I_{k_1,\dots
,k_{n-1},k_n}$, for $k_n\in J^{k_1,\dots ,k_{n-1},u}.$ For $x$ in
one of those intervals we have by \eqref{B1}, \eqref{B1a} and
\eqref{istep} that
$$
\phi^n(x)-\phi^n(T^{(\tau_n)}_{\alpha_n}(x))=\phi^{n-1}(x)-\phi^{n-1}(T^{(\tau_{n-1})}_{\alpha_{n-1}}(x)),
$$
which yields \eqref{C1} with $n-1$ replaced by $n$.

On the remaining intervals $I_{k_1,\dots ,k_n}$ with $k_n\in J^{k_1,\dots ,k_{n-1},c}$ we define $\tau_n$ such that it takes constant values in $\{-M_n+1,\ldots ,
M_n-1\}$ on each of these intervals, such that \eqref{B1} (resp.~\eqref{B1a}) is satisfied, and such that these intervals $I_{k_1,\dots ,k_n}$ are mapped onto the
``remaining gaps'' in $T^{(\tau_{n-1})}_{\alpha_{n-1}}(I_{k_1,\dots ,k_{n-1}}).$

The crucial observation is that the intervals $I_{k_1,\dots
,k_{n-1},k_n}$ where we have $\tau_n\ne \tau_{n-1}$, i.e.~where
$k_n\in J^{k_1, \ldots , k_{n-1},c},$ are all on the ``boundary''
of $I_{k_1,\dots ,k_{n-1}}$: they are the $|\tau|$ many intervals
on the left or right end of $I_{k_1,\dots ,k_{n-1}},$ depending on
the sign of $\tau.$ Similarly, the ``remaining gaps'' in
$T^{(\tau_{n-1})}_{\alpha_{n-1}} (I_{k_1,\dots ,k_{n-1}})$ are the
$|\tau|$ many intervals on the opposite end of
$T^{(\tau_{n-1})}_{\alpha_{n-1}} (I_{k_1,\dots ,k_{n-1}}).$ Hence
we may apply assertion (iii) of Lemma \ref{res-1} to conclude that
$$
|\phi^n(x)-\phi^n (T^{(\tau_{n})}_{\alpha_n} (x))|\le c(M_1,\dots ,M_{n-1}),
$$
for those $x\in I_{k_1,\dots ,k_{n-1}}$ where $\tau_n(x)\ne \tau_{n-1}(x).$
Summing over all ``good intervals'' $I_{k_1,\dots ,k_{n-1}}$, where $(k_1,\ldots ,k_{n-1})\in J^g_{n-1},$ we conclude that the contribution
to \eqref{I4}, with $n-1$ replaced by $n$, is controlled by
the following factors:
$M_{n-1}$, which is a bound for the number of elements in $J^g_{n-1}$, times $M_{n-1}$, which is a bound for $|\tau|$, times $\tfrac{1}{M_n}$, which is the
length of the
intervals $I_{k_1,\dots ,k_{n}}$, times the above found constant $c(M_1,\ldots ,M_{n-1}).$ In total, this implies the estimate \eqref{I4}, with $n-1$
replaced by $n$.

\medskip

We now turn to item (iv), i.e.~to the ``singular'' indices: fix
$k_1,\dots ,k_{n-1}\in J^s_{n-1}$ and let $\Delta\phi$ denote the
constant
$$
\Delta\phi :=
\phi^{n-1}(T^{(\tau_{n-1})}_{\alpha_{n-1}}(x))-\phi^{n-1}(x),
\quad x\in I_{k_1,\dots ,k_{n-1}},
$$
and again $\tau$ the constant ${\tau_{n-1}}_{|{I_{k_1,\dots
,k_{n-1}}}},$ so that $0 \le \Delta\phi\le |\tau|<M_{n-1}.$
\\
Similarly as for the case $n=2$ define
$$
J^{k_1,\dots ,k_{n-1},g,l}=\{k^l_n,k^l_n+1\dots
,\tfrac{m_n-1}{2}\}, \ \ J^{k_1,\dots
,k_{n-1},g,r}=\{\tfrac{m_n+1}{2},\dots , k^r_n\}.
$$
Here $k^r_n$ is the largest number such that, for the orbit
$(T^i_{\alpha_n}(x))_{i=\tau}^{\tau +\Delta\phi M_{n-1}-1}$ and
for $x\in I_{k_1,\dots , k_{n-1},k^r_n},$ all its members lie in
the right half of the respective intervals $I_{k'_1,\dots
,k'_{n-1}}.$ In fact, we get as in the step $n=2$ that $k^r_n=
m_n-(\tau +\Delta\phi M_{n-1}).$
\\
Similarly $k^l_n$ is the smallest number such that, for the orbit
$(T^i_{\alpha_n}(x))_{i=\tau}^{\tau -\Delta\phi M_{n-1}+1}$ and
for $x\in I_{k_1,\dots , k_{n-1},k^l_n},$ all its members are in
the left half of the respective intervals $I_{k'_1,\dots
,k'_{n-1}}.$ We get $k^l_n=\tau -\Delta\phi M_{n-1}+1.$
\\
Now we define $\tau_n$ as
$$\tau_n(x) =\tau+\Delta\phi M_{n-1}, \quad \mbox{for} \ x\in I_{k_1,\dots ,k_{n-1},k_n}, k_n\in J^{k_1,\dots ,k_{n-1},g,r},$$
and
$$
\tau_n (x)=\tau -\Delta\phi M_{n-1}, \quad \mbox{for} \ x\in I_{k_1,\dots ,k_{n-1},k_n},k_n\in J^{k_1,\dots ,k_{n-1},g,l}.
$$
Similarly as in \eqref{K17} at step $n=2$, we get for $k_n\in
J^{k_1,\dots ,k_{n-1},g}:=J^{k_1,\dots ,k_{n-1},g,l} \cup
J^{k_1,\dots ,k_{n-1},g,r},$ and $x\in I_{k_1,\dots ,k_{n-1},k_n}$
that
\begin{align*}
\begin{split}
\phi^n(x)-\phi^n & (T^{(\tau_n)}_{\alpha_n}(x))\\
    & =[\phi^n(x)-\phi^n(T^{(\tau_{n-1})}_{\alpha_n}(x))]
        +[\phi^n(T^{(\tau_{n-1})}_{\alpha_n}(x))-\phi^n(T^{(\tau_n)}_{\alpha_n}(x))] \\
    & =[\phi^{n-1}(x)-\phi^{n-1}( T^{(\tau_{n-1})}_{\alpha_{n-1}}(x))]
        +[\phi^n(T^{(\tau_{n-1})}_{\alpha_n}(x))-\phi^n(T^{(\tau_n)}_{\alpha_n}(x))] \\
& =-\Delta\phi+\Delta\phi=0.
\end{split}
\end{align*}
We still have to deal with the ``singular'' indices
$$
J^{k_1,\dots ,k_{n-1},s}:=\{1,\dots ,m_n\} \setminus J^{k_1,\dots
,k_{n-1},g}=\{1,\dots ,k^l_n-1\} \cup \{k^r_n+1,\dots ,m_n\},
$$
which consists of $2\Delta\phi M_{n-1}$ many indices. This number
is bounded by $2M^2_{n-1}$ as $\Delta\phi\le |\tau|<M_{n-1}.$
These intervals have to be mapped onto the ``remaining gaps'' in
the interval $T^{(\tau_{n-1})}_{\alpha_{n-1}}(I_{k_1,\dots
,k_{n-1}}).$ Make the crucial observation that, while the
intervals $I_{k_1,\dots ,k_{n-1},k_n},$ for $k_n\in J^{k_1,\dots
,k_{n-1},s}$, are at the boundary of $I_{k_1,\dots ,k_{n-1}}$, the
``remaining gaps'' are in the middle of the interval
$T^{(\tau_{n-1})}_{\alpha_{n-1}}(I_{k_1,\dots ,k_{n-1}}).$ This
fact is analogous to the situation for $n=1$ and $n=2.$

Now define $\tau_n$ on the intervals $I_{k_1,\dots ,k_{n-1},k_n}$
for $k_n\in J^{k_1,\dots ,k_{n-1},s}$, in such a way that
$T^{(\tau_n)}_{\alpha_n}$ maps these intervals onto the
``remaining gaps'' in
$T^{(\tau_{n-1})}_{\alpha_{n-1}}(I_{k_1,\dots ,k_{n-1}})$ and such
that $\tau_n$ is constant on each of these intervals, takes values
in $\{-M_n+1,\ldots ,M_n-1\}$ and such that \eqref{B1}
(resp.~\eqref{B1a}) is satisfied with $n-1$ replaced by $n$.
Applying Lemma \ref{res-1}, assertion (ii) as well as
$2(M_{n-1}+1)|\tau|$ many times assertion (i) we obtain, for $x\in
I_{k_1,\dots ,k_{n-1},k_n}$ and $k_n\in J^{k_1,\dots ,k_{n-1},s},$
$$
\phi^n(x)-\phi^n (T^{(\tau_n)}_{\alpha_n}(x))\le
-\tfrac{m_n}{2M_{n-1}}+c(M_1,\dots ,M_{n-1}).
$$
Assuming that $m_n$ is sufficiently large as compared to $M_{n-1}$ we have that the right hand side is negative.
\\
Keeping in mind that there are $2\Delta\phi M_{n-1}$ many indices
in $J^{k_1,\ldots ,k_{n-1},s}$, we may estimate the ``singular
mass'' on the interval $I_{k_1,\dots ,k_{n-1}}$ by
\begin{align}\label{I12}
\begin{split}
\sum_{k_n\in J^{k_1,\dots ,k_{n-1},s}} \int_{I_{k_1,\dots
,k_{n-1},k_n}} & [\phi^n(x)-\phi^n(T^{(\tau_n)}_{\alpha_n}(x))]\,
dx
\\ & \le 2\Delta\phi M_{n-1}[-\tfrac{m_n}{2M_{n-1}}+c(M_1,\dots ,M_{n-1})] \ \ \tfrac{1}{M_n}
\\ & =-\tfrac{\Delta\phi}{M_{n-1}} \ \ [1-\tfrac{c(M_1,\dots ,M_{n-1})}{m_n}].
\end{split}
\end{align}
We have by the inductive hypothesis that
$$
    \sum_{k_1,\dots ,k_{n-1}\in J^s_{n-1}} \int_{I_{k_1,\dots,k_{n-1}}}
    [\phi^{n-1}(x)-\phi^{n-1}(T^{(\tau_{n-1})}_{\alpha_n}(x))]\, dx
$$
$$\le -1+\tfrac{3}{m_1}+\tfrac{c(M_1)}{m_2}+\dots +\tfrac{c(M_1,\dots ,M_{n-2})}{m_{n-1}},$$
or, writing now $\Delta\phi_{k_1,\dots ,k_{n-1}}$ for the above
value of $\Delta\phi$ on the interval $I_{k_1,\dots ,k_{n-1}}$,
$$
\tfrac{1}{M_{n-1}} \sum\limits_{k_1,\dots ,k_{n-1}\in J^s_{n-1}} \Delta\phi_{k_1,\dots ,k_{n-1}}\le -1+\tfrac{3}{m_1}+\tfrac{c(M_1)}{m_2}+\dots +
\tfrac{c(M_1,\dots ,M_{n-2})}{m_{n-1}}.
$$
Letting $J^s_n:=\bigcup\limits_{k_1,\dots ,k_{n-1}\in
J^s_{n-1}}\{(k_1,\ldots ,k_{n-1},k_n):k_n\in J^{k_1,\dots
,k_{n-1},s}\}$ we obtain from \eqref{I12}
\begin{align*}
\begin{split}
\sum\limits_{k_1,\dots ,k_n\in J^s_n} \int_{I_{k_1,\dots ,k_n}}&[\phi^n(x)-\phi^n(T^{(\tau_n)}_{\alpha_n}(x))]\, dx \\
&\le (-1+\tfrac{3}{m-1}+\ldots +\tfrac{c(M_1,\ldots ,M_{n-2})}{m_{n-1}})(1-\tfrac{c(M_1,\ldots ,M_{n-1})}{m_n}) \\
&= -1+\tfrac{3}{m_1} +\dots +\tfrac{c(M_1,\dots
,M_{n-2})}{m_{n-1}}+\tfrac{c(M_1,\dots ,M_{n-1})}{m_n}.
\hspace{1cm}
\end{split}
\end{align*}
where we may have increased the constant $c(1,\ldots ,M_{n-1})$ in the last line. This concludes the inductive step.

\bigskip

\begin{proof}[Construction of the Example:]

Let $\alpha =\lim_{n\to\i}\alpha_n$ so that $T_\alpha =\lim_{n\to\i} T_{\alpha_n}$ is the shift by the irrational number $\alpha$.

The sequence $(\tau_n)^\i_{n=1}$ of functions $\tau_n:[0,1)\to \Z$
converges, by \eqref{C1}, almost surely to a $\Z$-valued function
$\tau=\lim_{n\to\i} \tau_n$. Hence the maps
$(T^{(\tau_n)}_{\alpha_n})^\i_{n=1}$ converge almost surely to a
map \begin{equation*} T_\alpha^{(\tau)}: \left\{
    \begin{array}{rcl}
      [0,1) & \to & [0,1) \\
      x & \mapsto & T_\alpha^{(\tau)} (x)
=T_\alpha^{\tau(x)}(x). \\
    \end{array}
    \right.
 \end{equation*}
Using the fact that each $T_{\alpha_n}^{(\tau_n)}$ is  a measure
preserving almost sure bijection on $[0,1)$, it is straightforward
to check that $T^{(\tau)}_\alpha$ is so too.

Letting $\Gamma_\tau=\{(x, T^{(\tau)}_\alpha (x)), \ x\in[0,1)\}$
in analogy to the notations $\Gamma_0=\{(x,x), \ x\in[0,1)\}$ and
$\Gamma_1=\{(x,T_\alpha (x)), \ x\in[0,1)\}$, we define
$$c(x,y)=
\begin{cases}
h_+(x,y), & \mbox{if } (x,y)\in \Gamma_0 \cup\Gamma_1\cup\Gamma_\tau,\\
\infty & \mbox{otherwise,}
\end{cases}
$$ where $h$ is defined in \eqref{dxy} above.
From this definition we deduce the almost sure identity, for $\tau(x) >0,$
\begin{align}\label{E28}
\begin{split}
h(x,T^{(\tau)}_\alpha (x)) & = \#\{i\in \{0,\ldots ,\tau(x)-1\}:T^i_\alpha (x)\in[0,\tfrac12)\}\\
& \quad ~ -\#\{i\in\{0,\ldots ,\tau (x)-1\}:T^i_\alpha (x)\in [\tfrac12 ,1)\}+1 \\
& =\lim\limits_{n\to\i} [\phi^{n}(x)-\phi^{n}
(T^{(\tau_n)}_{\alpha_n}(x))]+1,
\end{split}
\end{align}
a similar formula holding true for $\tau(x)<0.$

As regards the Borel functions $(\phi_n,\psi_n)^\i_{n=1}$ announced in \eqref{iii,a}, \eqref{iii,b} and \eqref{iii,c} above, we need to slightly modify the
functions $(\phi^{n},\psi^{n})^\i_{n=1}$
constructed in the above induction to make sure that they satisfy the inequality
\begin{align}\label{29}
\varphi_n(x)+\psi_n(y)\leq c(x,y), \quad \mbox{for }x\in X,y\in Y.
\end{align}
As $c=\i$ outside of $\Gamma_0 \cup\Gamma_1\cup\Gamma_\tau$ it is
sufficient to make sure that the following inequalities hold true
almost surely, for $x\in[0,1):$
\begin{align*}
& (0) \quad \phi_n(x)+\psi_n(x) \le c(x,x)=1,
\\
& (1) \quad \phi_n(x)+\psi_n(T_{\alpha}(x))\le c(x,T_\alpha(x))=
\begin{cases}
2, \ \ \mbox{for} \ x\in[0,\tfrac12), \\
0, \ \ \mbox{for} \ x\in[\tfrac12,1),
\end{cases}
\\
& (\tau)  \ \ \ \phi_n(x)+\psi_n (T^{(\tau)}_\alpha(x))\le c(x,T^{(\tau)}_\alpha(x)).
\end{align*}
The above constructed $(\phi^{n},\psi^{n})^\i_{n=1}$ only satisfy
condition $(0).$ We still have to pass from $\phi^n$ to a smaller
function $\phi_n$ -- while leaving $\psi_n:=\psi^n$  unchanged --
to satisfy $(1)$ and $(\tau)$ too. Let
\begin{align}\label{D1}
\begin{split}
\phi_n(x):=\phi^n(x) & -[\phi^n(x)+\psi^n(T_\alpha(x))-c(x,T_\alpha(x))]_+ \\
&-[\phi^n(x)+\psi^n(T_\alpha^{(\tau)}(x))-c(x,T^{(\tau)}_\alpha(x)]_+.
\end{split}
\end{align}
Clearly $\phi_n\le \phi^n$ and the functions $(\phi_n,\psi_n)$ satisfy the inequality \eqref{29}.

\medskip\noindent\textit{We have to show that the functions $\phi_n$ defined in
\eqref{D1} satisfy that $\phi^n-\phi_n$ is small in the norm of
$L^1(\mu)$, as $n\to\i,$ that is
\begin{equation}\label{eq-2}
    \lim_{n\to\i} \int_{[0,1)} (\phi^n(x)-\phi_n(x))\,dx =0,
\end{equation}
provided that $(m_n)^\i_{n=1}$ increases sufficiently fast to
infinity. }
\medskip

We may estimate the first correction term in \eqref{D1} by
\begin{align*}
\begin{split}
    [\phi^n(x)+\psi^n&(T_\alpha(x)) -c(x,T_\alpha(x))]_+\\
    &\le [\psi^n(T_\alpha(x))-\psi^n(T_{\alpha_n}(x))]_++[\phi^n(x)+\psi^n(T_{\alpha_n}(x))-c(x,T_\alpha(x))]_+.
\end{split}
\end{align*}
The second term above is dominated by $\mathbbm{1}_{\IM^n}$ which
is harmless as
$\|\mathbbm{1}_{\IM^n}\|_{L^1(\mu)}=\tfrac{1}{M_n}.$ As regards
the first term, note that $T_\alpha(x)\ominus
T_{\alpha_n}(x)=\alpha -\alpha_n = \sum^\i_{j=n+1} \tfrac{1}{M_j}$
which we may bound by $\tfrac{2}{M_{n+1}}$ by assuming that
$(m_n)^\i_{n=1}$ increases sufficiently fast to infinity. As
$\psi^n$ is constant on each of the $M_n$ many intervals
$I_{k_1,\ldots ,k_n}$ we get
\begin{align*}
\mu\{ x\in[0,1): \psi^n(T_\alpha(x))\ne \psi^n (T_{\alpha_n}(x)\} \ \le \ M_n(\alpha -\alpha_n) <\tfrac{2}{m_{n+1}}.
\end{align*}
On this set we may estimate, using only the obvious bound $|\psi_n(x)|<M_n$, that
\begin{align*}
|\psi^n(T_\alpha(x))-\psi^n(T_{\alpha_n}(x))|\le 2M_n, \quad x\in[0,1),
\end{align*}
to obtain
\begin{align*}
\|\psi^n(T_\alpha(x))-\psi^n(T_{\alpha_n}(x))\|_{L^1(\mu)}<\tfrac{4M_n}{m_{n+1}}.
\end{align*}
Hence for $(m_n)^\i_{n=1}$ growing sufficiently fast to infinity, the first correction term in \eqref{D1} is also small in $L^1$-norm.
\vskip6pt

To estimate the second correction term in \eqref{D1} note that
\begin{align}\label{D4}
\phi^n(x)+\psi^n(T^{(\tau)}_\alpha(x))=\phi^n(x)+\psi^n(T^{(\tau_n)}_{\alpha_n}(x)), \quad \ \mbox{for} \ x\in[0,1).
\end{align}
Indeed, $T^{(\tau_n)}_{\alpha_n}$ induces a permutation between the intervals $I_{k_1,\ldots ,k_n}$ and, by assertion (i) preceding the formula \eqref{B1}, we have
that $T^{(\tau_{n+j})}_{\alpha_{n+j}}$ maps the intervals $I_{k_1,\ldots ,k_n}$ onto the intervals $T^{(\tau_n)}_{\alpha_n} (I_{k_1,\ldots ,k_n}),$
for each $j\geq 0.$ Noting that $\psi^n$ is
constant on each of the intervals $I_{k_1,\ldots ,k_n}$ we obtain \eqref{D4}, by letting $j$ tend to infinity.
\\
By \eqref{istep}, $\phi^n(x)+\psi^n (T^{(\tau_n)}_{\alpha_n}(x))$
is the number of visits to $L^n$ minus the number of visits to
$R^n$ plus one, of the orbit
$(T^j_{\alpha_n})^{\tau_n(x)-1}_{j=0}.$ Similarly, by \eqref{dxy},
 $h(x,T^{\tau(x)}_\alpha(x))$ is the number of visits to $L$ minus the number of visits to
$R$ plus one, of the orbit $(T^j_{\alpha})^{\tau(x)-1}_{j=0}.$ We
have to show that the positive part of the difference
\begin{align}\label{66}
f_n(x):= [\phi^n(x)+\psi^n(T^{(\tau_n)}_{\alpha_n}(x))-h_+(x,T^{(\tau)}_{\alpha}(x))]_+, \quad x\in[0,1),
\end{align}
is small in $L^1$-norm, as $n\to\i.$ To do so, we argue separately
on $\IM^1=[\tfrac12-\tfrac{1}{2M_1},\tfrac12 +\tfrac{1}{2M_1}],$
on the union of the ``good'' intervals at level $n:$
    $
G_n=\bigcup_{(k_1,\ldots ,k_n)\in J^g_n}   I_{k_1,\ldots ,k_n},
    $
and the union of the ``singular'' intervals at level $n,$
    $
S_n=\bigcup_{(k_1,\ldots ,k_n)\in J^s_n}   I_{k_1,\ldots ,k_n}.
    $
\begin{itemize}
    \item[-] For $x\in\IM^1,$ the correction term $f_n(x)$ in \eqref{66} simply
equals zero as $\tau_n(x)=\tau(x)=0.$

    \item[-] For $x\in S_n$, we have by \eqref{I5a} that
$\phi^n(x)+\psi^n(T^{\tau_n(x)}_{\alpha_n}(x))\le 1$ so that
$f_n(x)\le 1$ too; hence
$\lim_{n\to\i}\|f_n\mathbbm{1}_{S_n}\|_{L^1(\mu)}=0.$

    \item[-] For $x\in G_n$, we use
\begin{align*}
\begin{split}
f_n(x) & \le [\phi^n(x)+\psi^n(T^{(\tau_n)}_{\alpha_n}(x))-h(x,T^{\tau(x)}_{\alpha}(x))]_+ \\
& \le\sum\limits^\i_{k=n+1}
[(\phi^{k-1}(x)+\psi^{k-1}(T^{(\tau_{k-1})}_{\alpha_{k-1}}(x)))-(\phi^k(x)+\psi^k(T^{(\tau_k)}_{\alpha_k}(x)))]_+
\end{split}
\end{align*}
and \eqref{I4} to conclude that
\begin{align*}
\lim\limits_{n\to\i} \|f_n\mathbbm{1}_{G_n}\|_{L^1(\mu)}\le
\lim\limits_{n\to\i}\sum\limits^\i_{k=n+1} \tfrac{c(M_1,\ldots
,M_{k-1})}{m_{k}}=0.
\end{align*}
\end{itemize}
This proves \eqref{eq-2}.
\\
Hence \eqref{iii,a}, \eqref{iii,b} and \eqref{iii,c} are satisfied.

\medskip
As regards assertion \eqref{Seite10}, let us verify that $\pi_0$
and $\pi_1$ are optimal transport plans. Indeed, it follows from
\eqref{iii,a} and \eqref{iii,b} that the dual value of the present
transport problem is greater than or equal to one which implies
that $\langle c,\pi_0\rangle= \langle c,\pi_1\rangle =1$ is the
optimal primal value.

The fact that $\langle c,\pi_\tau\rangle >1$ should be rather
obvious to a reader who has made it up to this point of the
construction. It follows from rough estimates. The set
$\{[0,\tfrac12)\cap
\{\tau=-1\}\}\cup\{[\tfrac12,1)\cap\{\tau=1\}\}$ has measure
bigger than $1-\tfrac{3}{M_1}+\sum^\i_{i=2} \tfrac{c(M_1,\ldots
,M_{i-1})}{m_i}$, which is bigger than, say, $\tfrac34$, for
$(m_n)^\i_{n=1}$ tending sufficiently quick to infinity. As
$c(x,T^{(\tau)}_\alpha(x))$ equals $2$ on this set we get
$$\langle c,\pi_\tau\rangle \geq\tfrac32 >1.$$ A slightly more involved argument, whose verification is left to the energetic reader,
shows that, for $\varepsilon >0$,  we may choose $(m_n)^\i_{n=1}$ such that
\begin{align}\label{E1}
\langle h,\pi_\tau\rangle \geq 2-\varepsilon.
\end{align}

Finally, we show assertion (iv) at the beginning of this section (see \eqref{tag4}).
Let $\hh \in L^1(\pi)^{**}$ be a dual optimizer in the sense of  \cite[Theorem 4.2]{BeLS09a}.
We know from this theorem that there is a sequence $(\phi_n,\psi_n)^\i_{n=1}$ of bounded Borel functions\footnote {The $(\phi_n ,\psi_n)$ need not be the
same as the special sequence constructed above; still we find it convenient to use the same notation.} such that
\begin{align}\label{alpha}
& (\alpha) \lim\limits_{n\to\i}
\|[\phi_n\oplus\psi_n-c]_+\|_{L^1(\pi)}=0 \quad
\\
\label{beta} & (\beta) \lim\limits_{n\to\i}(\int_X \phi_n(x)\,
d\mu(x)+\int_Y\psi_n(y) ~ d\nu(y))=1,
\\
\label{gamma} & (\gamma) \lim\limits_{n\to\i}
\phi_n\oplus\psi_n=\hh^r, \quad \pi\mbox{-a.s.},
\\
\label{delta}
&
(\delta)
\mbox{ $\hat h$ is a $\sigma (L^1(\pi)^{**}, L^\i(\pi))$ cluster point of $(\phi_n\oplus\psi_n)_{n=1}^{\infty}$.}
\end{align}
Here $\hh=\hh^r+\hh^s$ is the decomposition of $\hh\in L^1(\pi)^{**}$ into its regular part $\hh^r\in L^1(\pi)$ and into its purely singular part
$\hh^s\in L^1(\pi)^{**}.$
\\
We shall show that $\hh^r$ equals $h,$ $\pi$-almost surely. Indeed
by assertions \eqref{alpha} and \eqref{beta} above we have that,
for $x\in[0,1),$
$$\lim\limits_{n\to\i}(\phi_n(x)+\psi_n(x))=c(x,x)=h(x,x)=1,$$
and
\begin{align*}
\lim\limits_{n\to\i}(\phi_n(x)+\psi_n(T_\alpha(x)))=c(x,T_\alpha(x))=h(x,T_{\alpha}(x))=
\begin{cases}
2, \  \mbox{for} \ x\in [0,\tfrac12), \\
0, \  \mbox{for} \ x\in [\tfrac12, 1),
\end{cases}
\end{align*}
the limit holding true in $L^1([0,1],\mu)$ as well as for $\mu$-a.e.~$x\in [0,1)$, possibly after passing to a subsequence. As in the discussion following
\cite[Theorem 4.2]{BeLS09a} this implies that, for each fixed $i\in\Z$,
$$\lim\limits_{n\to\i} (\phi_n(x)+\psi_n(T^i_\alpha(x)))=h(x, T^i_\alpha(x)),\quad i\in\Z,$$
the limit again holding true in $L^1(\mu)$ and $\mu$-a.s., after
possibly passing to a diagonal subsequence. Whence, we obtain with
\eqref{gamma} that
\begin{align*}
\lim\limits_{n\to\i}(\phi_n(x)+\psi_n(T^{(\tau)}_\alpha(x)))=h(x,T^{(\tau)}_\alpha(x))=\hh^r(x,T^{(\tau)}_\alpha(x)),
\end{align*}
convergence now holding true for $\mu$-a.e.\ $x\in [0,1]$.
\\
As $x\to T^{(\tau)}_\alpha(x)$ is a measure preserving bijection we get
\begin{align*}
\int_{[0,1)}[\phi_n(x)+\psi_n(T^{(\tau)}_\alpha(x))]\,dx =
\int_{[0,1)} (\phi_n(x)+\psi_n(x))\, dx=1,
\end{align*}
so that, using \eqref{E1} we get
\begin{eqnarray*}
   &&
   \lim_{n\to\i}\int_{[0,1)}[\phi_n(x)+\psi_n(T^{(\tau)}_\alpha(x))]
        \mathbbm{1}_{\{\phi_n(x)+\psi_n(T^{(\tau)}_\alpha(x))<h(x,T^{(\tau)}_\alpha(x))\}}(x) \,dx\\
  &=&
  1-\lim\limits_{n\to\i}\int_{[0,1)}[\phi_n(x)+\psi_n(T^{(\tau)}_\alpha(x))]
    \mathbbm{1}_{\{\phi_n(x)+\psi_n(T^{(\tau)}_\alpha(x))\geq h(x,T^{(\tau)}_\alpha(x))\}} (x)\,dx\\
  &=& 1-\langle h,\pi_\tau\rangle\\
  &<& 0.
\end{eqnarray*}
From
$\lim_{n\to\i}\mu\big\{x:\phi_n(x)+\psi_n(T^{(\tau)}_\alpha(x))<
h(x,T^{(\tau)}_\alpha(x))\big\}=0$ we conclude that each
$\sigma^*$-cluster point of
$([\phi_n(\cdot)+\psi_n(T^{(\tau)}_\alpha(\cdot))]_-)^\i_{n=1}$ is
a purely singular element of $L^1(\pi)^{**}$ of norm equal to
$\langle h,\pi_\tau\rangle -1.$

Finally, we still have to specify the prime numbers $(m_n)^\i_{n=1}$ in the above induction. It is now clear what we need: apart from satisfying the conditions of
Lemma 3.1 as well as the
requirements whenever we wrote ``{\it for $m_n$ tending sufficiently fast to infinity}'', we choose the $(m_n)^\i_{n=1}$ inductively such that in \eqref{C1} we have
$\tfrac{M_{n-2}}{m_{n-1}}<2^{-n}$, that in \eqref{I4} we have $\tfrac{c(M_1,\ldots ,M_{n-2})}{m_{n-1}}<2^{-n}$ and in \eqref{I5b} we have
$\tfrac{3}{m_1} <\tfrac14$ as well as again
$\tfrac{c(M_1,\ldots ,M_{n-2})}{m_{n-1}}<2^{-n}$.

Hence we have shown all the assertions (i)-(iv) of Example 3.1 and the construction of the example is complete.
\end{proof}


\section{A Relaxation of the Dual Problem}

As in \cite[Remark 3.4]{BeLS09a}, for a given cost function $c:X\times Y\to[0,\i],$ we consider the family of pairs of functions
$$\Psi^{\textrm{rel}}(\mu,\nu)= \left\{
\begin{array}{lllll}
(\phi,\psi):\phi,\psi \ \mbox{Borel, integrable and}\\
\phi(x)+\psi(y)\le c(x,y), \ \pi\mbox{-a.s}, \\
\mbox{for each finite transport plan} \ \pi\in\Pi (\mu,\nu,c)
\end{array} \right\}$$
and define the relaxed value of the dual problem as
\begin{align}\label{TagAuf33}
\Drel =\sup\Big\{\int_X \phi \ d\mu +\int_Y \psi \ d\nu
:(\phi,\psi)\in \Psi^{\mathrm{rel}} (\mu,\nu)\Big\}.
\end{align}
Using the notation of \cite{BeLS09a} it is obvious that $D\le
\Drel$ and it is straightforward to verify that the trivial
duality inequality
$\Drel\le P$ still is satisfied. 
One might conjecture -- and the present authors did so for some
time -- that $\Drel= P$ holds true in full generality, i.e.~for
arbitrary Borel measurable cost functions $c:X\times Y\to[0,\i],$
defined on the product of two polish spaces $X$ and $Y$. In this
section we construct a counterexample showing that this is not the
case, i.e.\ it may happen that we have a duality gap $P-\Drel >0$.
The example will be a variant of the example in the previous
section, i.e.\ the $(n+1)$'th variation of \cite[Example
3.2]{AmPr03}.

In section 3 we constructed a measure preserving bijection $T^{(\tau)}_{\alpha} :[0,1)\to [0,1)$ having certain properties; we now shall construct
a sequence $(T^{(\tau_n)}_{\alpha})^\i_{n=0}$ of such maps and consider as cost function the restriction of $h_+$, where $h$ is defined in \eqref{dxy}
to the graphs $(\Gamma_n)^\i_{n=0}$
of the maps $(T^{(\tau_n)}_\alpha)^\i_{n=0}$. This sequence also ``builds up a singular mass'', which now is positive as opposed to the negative
singular mass in the previous section, but it does so in a different way. We resume the properties of these maps which we shall construct in the following
proposition.
\begin{proposition}\label{P4.1}
With the notation of section 3 there is an irrational $\alpha\in[0,1)$ and a sequence $(\tau_n)^\i_{n=0}$ of maps $\tau_n :[0,1)\to\Z$, with
$\tau_0=0$ and $\tau_1=1$, such that the transformations
$T^{(\tau_n)}_{\alpha} :[0,1)\to [0,1)$, defined by
$$T^{(\tau_n)}_{\alpha}(x)=T^{\tau_n(x)}_{\alpha}(x), \qquad x\in [0,1),$$
have the following properties.
\begin{enumerate}
\item
Each $\tau_n$ is constant on a countable collection of disjoint, half open intervals in $[0,1)$ whose union has full measure. For $n\geq 0$, the map
$T^{(\tau_n)}_{\alpha}$ defines  a measure
preserving almost sure bijection of $([0,1),\mu)$ onto itself, where $\mu =\nu$ denotes Lebesgue measure on $[0,1).$
We have, for each $n\geq 0$,
\begin{align}\label{P32}
\int_{[0,1)} h(x,T^{(\tau_n)}_\alpha(x)) \, dx=1.
\end{align}
\item
The function
$$f_n(x):=h(x, T^{(\tau_n)}_{\alpha}(x)), \qquad x\in[0,1),$$
where $h$ is defined in \eqref{dxy}, satisfies
\begin{align}\label{L3}
\| f_n-g_n \|_{L^1(\mu)} < 2^{-n}
\end{align}
where $g_n$ is a Borel function on $[0,1)$ such that
\begin{align}\label{L3a}
\mu\{g_n=0\}=1-\eta_n ,\qquad \mu\{g_n=\tfrac{1-\eta_n}{\eta_n}\}=\eta_n
\end{align}
for some sequence $(\eta_n)^\i_{n=1}$ tending to zero.
\item
There is a sequence $(\phi_n,\psi_n)^\i_{n=1}$ of bounded Borel functions such that, for every fixed $n\in\N$,
$$\lim\limits_{m\to\i}\| h(x,T^{(\tau_n)}_{\alpha}(x)) -[\phi_m(x)+\psi_m(T^{(\tau_n)}_{\alpha}(x))]\|_{L^1(\mu)}=0,$$
and
$$\lim\limits_{n\to\i} \Big[\int_{[0,1)} \phi_n(x) \,dx +\int_{[0,1)} \psi_n(y) \,dy\Big]=1.$$
\item The sequence $(T^{(\tau_n)}_\alpha)^\i_{n=1}$ converges to
the identity map in the following sense:
\begin{align}\label{L4}
\delta(x,T^{(\tau_n)}_\alpha(x)) <2^{-n},\qquad x\in[0,1), \ n\geq 1,
\end{align}
where $\delta(\cdot ,\cdot)$ denotes the Riemannian metric on $\T=[0,1)$.
\end{enumerate}
\end{proposition}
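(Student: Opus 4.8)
The plan is to run a variant of the inductive construction of Section~\ref{SingPart}, keeping the irrational $\alpha=\sum_{j\geq1}M_j^{-1}$ with $M_j=m_1\cdots m_j$ and $(m_j)$ a sequence of primes satisfying Lemma~\ref{RelPrimeLemma} and growing as fast as needed, but now extracting a whole sequence $(\tau_n)_{n\geq0}$ of ``finished'' maps rather than a single limit. Each $\tau_n$ will itself be built as an a.s.\ limit of finite-level maps realised by honest permutations of intervals under the rational rotations $\alpha_{N},\alpha_{N+1},\dots$ at a level $N=N(n)$, exactly as in the ``Construction of the Example'' paragraph of Section~\ref{SingPart}; this makes $\tau_n$ constant on a countable family of half-open intervals of full measure and $T^{(\tau_n)}_\alpha$ an a.s.\ measure preserving bijection. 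The crucial constraint is that $\tau_n$ takes only values of the form $\pm cM_{N+k}$; since $M_{N+k}\alpha_{N+k}\in\N$, one then has $\tau_n(x)\alpha\equiv\tau_n(x)(\alpha-\alpha_{N+k})\pmod1$ and $T^{(\tau_n)}_\alpha$ acts, on each interval where $\tau_n$ is constant, as a rigid rotation by an amount bounded by $2c/m_{N+k+1}$; making the later primes large enough this is $<2^{-n}$, which gives \eqref{L4}. The base cases $\tau_0=0$, $\tau_1=1$ are as prescribed.

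\textbf{Killing $h$ off a small set.} Write $\varrho_{M_N}(x)=1+\sum_{i=0}^{M_N-1}g(x\oplus i\alpha)$ with $g=\I_{[0,1/2)}-\I_{[1/2,1)}$. Since $\alpha_N$ is a convergent of $\alpha$, a Denjoy--Koksma count over the single $M_N$-cycle of intervals shows that, off a set of measure $\lesssim M_N/m_{N+1}$, one has $\varrho_{M_N}(x)\in\{0,2\}$, each value occurring on measure $\approx\tfrac12$. Iterating this at the levels $N,N+1,N+2,\dots$ partitions $[0,1)$, up to a null tail, into sets $A_k=\{\varrho_{M_N}=2,\dots,\varrho_{M_{N+k-1}}=2,\varrho_{M_{N+k}}=0\}$ of measure $\approx2^{-k-1}$, each a countable union of half-open intervals; on $A_k$ we put $\tau_n\equiv M_{N+k}$, so that $h(x,T^{(\tau_n)}_\alpha(x))=\varrho_{\tau_n(x)}(x)=0$ there and $T^{(\tau_n)}_\alpha$ rotates $A_k$ rigidly by a fixed amount $<2/m_{N+k+1}<2^{-n}$. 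Since each $A_k$ is nearly invariant under the tiny rotations involved, the translates $(A_k\oplus\delta_k)_k$ tile $[0,1)$ up to a symmetric difference of total measure $<\tfrac12\eta_n$, which we repair by a further finite re-routing along near-periods, keeping $\tau_n$ constant on intervals and affecting $h$ only on a set of measure $<\tfrac12\eta_n$. This in particular gives \eqref{P32}: off the (small) singular set the integrand vanishes, and the singular contribution will be normalised to $1$; equivalently, at each finite level the rational rotation is a single cycle and $\int g=0$, so $\varrho^{(\alpha_{N'})}_{\tau_n(x)}(x)-1$ is a coboundary and has vanishing integral by measure preservation, and passing from $\alpha_{N'}$ to $\alpha$ changes this by an amount made negligible by the growth of $(m_j)$.

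\textbf{Building the positive lump and (iii).} Fix $n\geq2$ and $\eta_n<2^{-n}$. Inside $A_1=\{\varrho_{M_N}=2\}$ (measure $\approx\tfrac12$) carve out $S_n$ of measure $\eta_n$ and, instead of $\tau_n\equiv M_{N+1}$, set $\tau_n\equiv j_nM_N$ on $S_n$ with $j_n=\lfloor\eta_n^{-1}\rfloor$. On $A_1$ one has, for the rational rotation, $\varrho_{jM_N}(x)=1+j(\varrho_{M_N}(x)-1)=1+j$, i.e.\ each block of $M_N$ steps adds $+1$ to $\varrho$ --- the analogue, with reversed sign, of the singular-mass buildup \eqref{I12} of Section~\ref{SingPart}; passing to $\alpha$ this gives $\varrho_{\tau_n(x)}(x)=1+j_n\approx\eta_n^{-1}$ for $x\in S_n$, constant up to errors from the $\alpha\leftrightarrow\alpha_N$ drift and from orbit points within $O(j_n/m_{N+1})$ of $\{0,\tfrac12\}$, which are kept below $2^{-n-1}$ in $L^1(\mu)$ by taking $m_{N+1}$ large enough. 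With the vanishing of $h$ off $S_n$ this yields \eqref{L3}--\eqref{L3a} with $g_n=\tfrac{1-\eta_n}{\eta_n}\I_{S_n}$ and $\eta_n\to0$, and $T_\alpha^{j_nM_N}$ rotates $S_n$ rigidly by $<2j_n/m_{N+1}<2^{-n}$, completing \eqref{L4}. For (iii) take $\phi_m=-\phi^m$, $\psi_m=\phi^m+1$ with $\phi^m$ the bounded Borel potentials of Section~\ref{SingPart}, so that $\int\phi_m\,d\mu+\int\psi_m\,d\mu=1$ for every $m$, while $\phi_m(x)+\psi_m(T^{(\tau_n)}_\alpha(x))=1+[\phi^m(T^{(\tau_n)}_\alpha(x))-\phi^m(x)]$; for fixed $n$ and $m$ large the difference in brackets is uniformly bounded by $\|\tau_n\|_\i$ and converges $\mu$-a.s.\ to $\varrho_{\tau_n(x)}(x)-1$ (the potential differences $\phi^m(x\oplus k\alpha)-\phi^m(x)$ stabilising to $\varrho_k(x)-1$ for each fixed $k$, as in \eqref{E28} and the discussion following \cite[Theorem~4.2]{BeLS09a}), whence $\phi_m(x)+\psi_m(T^{(\tau_n)}_\alpha(x))\to h(x,T^{(\tau_n)}_\alpha(x))$ in $L^1(\mu)$ by dominated convergence.

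\textbf{Main obstacle.} As in Section~\ref{SingPart}, the difficulty is not any single estimate but reconciling, \emph{simultaneously}: that $T^{(\tau_n)}_\alpha$ be a measure preserving bijection, which forces $S_n$ and the ``gaps'' created by the dyadic re-labelling to be re-inserted through a fractal cascade of small corrections; that every rigid rotation used stay inside a window of radius $2^{-n}$, for \eqref{L4}; and that $\varrho_{\tau_n(x)}(x)$ be controlled precisely enough --- equal to $0$ off $S_n$ and within small $L^1$-error of the constant $\eta_n^{-1}$ on $S_n$ --- so that the errors accumulated at level $n$ total less than $2^{-n}$. Making these compatible requires the quantitative analogues of the oscillation Lemmas~\ref{L2.2} and~\ref{res-1}, now invoked to guarantee that the Birkhoff walk $k\mapsto\varrho_k(x)$ can be brought \emph{back to} the value $0$ (not merely kept bounded) on the complement of a set of arbitrarily small measure, and then to bound the oscillation of $\varrho$ over $S_n$; establishing these, and choosing the primes $(m_j)$ inductively large enough that all the ``$<2^{-n}$'' clauses above hold, is where the real work lies. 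Once it is done, the verification of (i)--(iv) and the passage to the limit proceed as above and as in the construction of Example~3.1.
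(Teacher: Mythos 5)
Your strategy is genuinely different from the paper's. For each $n$ the paper constructs an auxiliary inductive tower $\tau_{n-1,n-1},\tau_{n-1,n},\tau_{n-1,n+1},\dots$; each $T^{(\tau_{n-1,j})}_{\alpha_j}$ is an \emph{exact permutation} of the level-$j$ intervals $I_{k_1,\dots,k_j}$ which in addition maps every $I_{k_1,\dots,k_{n-2}}$ onto itself, and one sets $\tau_n=\lim_{j\to\infty}\tau_{n-1,j}$. A.s.\ bijectivity, measure preservation, and the localisation $\delta(x,T^{(\tau_n)}_\alpha(x))<M_{n-2}^{-1}$ needed for (iv) are then structural consequences of the inductive bookkeeping. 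You instead decompose $[0,1)$ by the stopping time $k\mapsto\min\{k:\varrho_{M_{N+k}}(x)\neq 2\}$, prescribing $\tau_n\equiv M_{N+k}$ on $A_k$ and $\tau_n\equiv j_nM_N$ on a carved-out set $S_n$, so that $h(\cdot,T^{(\tau_n)}_\alpha\,\cdot)$ vanishes off $S_n$ by design. Your remarks about the approximate dyadic independence of the events $\{\varrho_{M_{N+i}}=2\}$, the Denjoy--Koksma-type bound, and the test functions $(\phi_m,\psi_m)=(-\phi^m,\,1+\phi^m)$ for item (iii) are all consistent with the paper's Lemma~\ref{res-1} machinery and with the sign-flip $\tau_{1,1}=-\tau_1$ used there to produce \emph{positive} singular mass.

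The genuine gap is the very step you concede is ``where the real work lies'': making $T^{(\tau_n)}_\alpha$ an a.s.\ measure-preserving bijection. Your piecewise recipe gives a map whose images $T_\alpha^{M_{N+k}}(A_k)$ neither tile $[0,1)$ nor are disjoint, and the promised ``repair by a further finite re-routing along near-periods'' is exactly the combinatorial content that the paper supplies explicitly and your sketch omits. Any such re-routing must simultaneously (a) produce an honest a.s.\ bijection, (b) keep $\tau_n$ piecewise constant on half-open intervals, (c) keep $\delta(x,T^{(\tau_n)}_\alpha(x))<2^{-n}$ for \eqref{L4}, and (d) keep $\|f_n-g_n\|_{L^1(\mu)}<2^{-n}$. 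Constraint (c) is the dangerous one: the overlaps and gaps you must match up sit at different points of $[0,1)$, and nothing in your scheme guarantees they are $2^{-n}$-close to one another, so re-routing a point across such a gap can move it macroscopically and destroy (iv). The paper avoids this because at each finite level the ``remaining gaps'' and the ``boundary'' intervals to be re-routed always lie in the \emph{same} cell $I_{k_1,\dots,k_{j-1}}$ of diameter $1/M_{j-1}$, with Lemma~\ref{res-1}(iii) controlling the resulting oscillation of $\phi^j$; until your re-routing is shown to respect that kind of locality, the construction does not close. A secondary soft spot: since $\tau_n$ is unbounded (taking the value $M_{N+k}$ on every $A_k$), the passage from the fixed-$k$ pointwise stabilisation $\phi^m(x\oplus k\alpha)-\phi^m(x)\to\varrho_k(x)-1$ to the $L^1(\mu)$ convergence in (iii) requires a genuine domination argument, which you do not supply.
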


We postpone the proof of the proposition and first draw some
consequences. Suppose that $\alpha$ as well as
$(T^{(\tau_n)}_\alpha)^\i_{n=0}$ have been defined and satisfy the
assertions of Proposition \ref{P4.1}.

\begin{proposition}\label{L4.2}
Fix $M\geq 2$ and define the cost function $c_M:[0,1)\times [0,1)\to[0,\i]$ by
\begin{align*}
c_M(x,y)=
\begin{cases}
h_+(x,y), \ &\mbox{for} \ (x,y) \ \mbox{in the graph of}\ T^0_\alpha, T^1_\alpha, T^{(\tau_2)}_\alpha,T^{(\tau_3)}_\alpha,\dots ,T^{(\tau_M)}_\alpha, \\
\i, \ &\mbox{otherwise.} \\
\end{cases}
\end{align*}
For this cost function $c_M$ we find that the  primal value, denoted by $P^M$, as well as the dual value, denoted by $D^M$, of the Monge--Kantorovich problem
both are equal to 1.

In addition, there is $\beta=\beta(M)>0$, such that,
for every partial transport 
$$\sigma\in\Pipart(\mu,\nu):=\{\sigma:{\mathcal M}(X\times Y):p_X(\pi)\leq \mu, p_Y(\pi)\leq \nu\}$$
with
$$\|\sigma\|\geq\tfrac23 \ \mbox{and} \ \int_{X\times Y} c_M(x,y) \ d\sigma(x,y)\le\tfrac12,$$
there is no partial transport $\varrho\in\Pipart(\mu,\nu)$ with
$$\|\sigma+\varrho \|=1 \ \mbox{and} \ \sigma+\varrho
\in\Pi(\mu,\nu)$$ with the property that $\varrho$ is supported by
$$\Delta^\beta=\{(x,y)\in[0,1)^2: \delta (x,y)<\beta\}.$$
\end{proposition}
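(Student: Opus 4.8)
The plan is, first, to settle $P^M=D^M=1$ by elementary means, and then to rule out a near-diagonal completion by a Hall-type incompatibility argument run at a carefully chosen scale $\beta$. For $P^M\le 1$ one tests with $\pi_0=(id,id)_\#\mu$, which is carried by $\Gamma_0$ and satisfies $\langle c_M,\pi_0\rangle=\int_0^1 h_+(x,x)\,dx=\int_0^1\varrho_0(x)\,dx=1$, since $h(x,x)=\varrho_0(x)=1$. For $D^M\ge 1$ I would take the bounded Borel functions $(\phi_m,\psi_m)$ of Proposition~\ref{P4.1}(iii) and replace $\phi_m$ by the smaller function obtained, exactly as in \eqref{D1}, by subtracting the positive parts of the excesses of $\phi_m\oplus\psi_m-c_M$ along $\Gamma_1,\dots,\Gamma_M$ (there is no excess along $\Gamma_0$, where $\phi_m+\psi_m\equiv 1=c_M$); the $L^1$-estimates of Section~\ref{SingPart} show this modification costs an amount tending to $0$, so the new pair is admissible for $D^M$ with $\int\phi_m\,d\mu+\int\psi_m\,d\nu\to 1$. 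Weak duality $D^M\le P^M$ then forces $P^M=D^M=1$.

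Each $\tau_n$ is bounded (this is clear from the construction, say $|\tau_n|\le C_n$), and $\alpha$ is irrational, so with $\|\cdot\|$ the distance to the nearest integer one has $\beta_n:=\min\{\|k\alpha\|:1\le|k|\le C_n\}>0$. I set $\beta=\beta(M):=\min\big(\alpha,\,2^{-M},\,\min_{2\le n\le M}\beta_n\big)>0$. The key geometric observation is that $\Delta^\beta\cap(\Gamma_0\cup\cdots\cup\Gamma_M)=\Gamma_0$: indeed $\Gamma_0\subseteq\Delta^\beta$; $\Gamma_1\cap\Delta^\beta=\emptyset$ because $\delta(x,x\oplus\alpha)=\|\alpha\|=\alpha\ge\beta$ (note $\alpha=\sum 1/M_j<\tfrac14$ as $M_j\ge 5^j$); and for $2\le n\le M$, if $\delta\big(x,T^{(\tau_n)}_\alpha(x)\big)=\|\tau_n(x)\alpha\|<\beta\le\beta_n$ then necessarily $\tau_n(x)=0$, i.e.\ $T^{(\tau_n)}_\alpha(x)=x$, so the point already lies on $\Gamma_0$. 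In particular $\Delta^\beta\cap\{c_M<\infty\}=\Gamma_0$ and $c_M\equiv 1$ there.

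Now suppose, for contradiction, that such a $\sigma$ admits $\varrho\in\Pipart(\mu,\nu)$ with $\|\sigma+\varrho\|=1$, $\pi:=\sigma+\varrho\in\Pi(\mu,\nu)$ and $\varrho$ carried by $\Delta^\beta$. Then $p_X(\varrho)=\mu-p_X(\sigma)$ and $p_Y(\varrho)=\nu-p_Y(\sigma)$ have total mass $1-\|\sigma\|\le\tfrac13$, and since $\varrho$ is carried by $\Delta^\beta$ we get the local compatibility inequality
$$\big(\mu-p_X(\sigma)\big)(A)\ \le\ \big(\nu-p_Y(\sigma)\big)(A^\beta)\qquad\text{for every Borel }A\subseteq[0,1),\quad A^\beta=\{y:\delta(y,A)<\beta\}.$$
On the other hand $\int c_M\,d\sigma\le\tfrac12$ and $c_M=+\infty$ off $\bigcup_0^M\Gamma_n$ force $\sigma$ to be carried by $\bigcup_0^M\Gamma_n$; writing $\sigma=\sum_{n=0}^M(id,T^{(\tau_n)}_\alpha)_\#\lambda_n$ with $\lambda_n\ge 0$, $\sum_n\lambda_n=p_X(\sigma)\le\mu$, the cost bound together with Proposition~\ref{P4.1}(ii) gives $\|\lambda_0\|\le\tfrac12$ (as $c_M\equiv1$ on $\Gamma_0$), $\lambda_1\big([0,\tfrac12)\big)\le\tfrac14$ (as $c_M\equiv2$ on $\Gamma_1\cap([0,\tfrac12)\times Y)$), and $\lambda_n\big(\{g_n>0\}\big)=O(\eta_n)$ for $2\le n\le M$ (on $\{g_n>0\}$ one has $c_M\ge\tfrac{1-\eta_n}{\eta_n}$ up to an $L^1$-error $<2^{-n}$). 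Thus the cheap mass of $\sigma$ is pinned to $\Gamma_0$ (mass $\le\tfrac12$), to $\Gamma_1$ over $[\tfrac12,1)$, and to the good sets $\{g_n=0\}$ of $\Gamma_2,\dots,\Gamma_M$.

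The remaining — and decisive — step is to convert this into a violation of the local compatibility inequality at the explicit scale $\beta$. The mechanism is transparent in the sub-case where $\varrho$ puts no mass on $\{c_M=\infty\}$: then $\varrho$ is carried by $\Gamma_0$, so $\varrho=(id,id)_\#\kappa$ with $\kappa:=p_X(\varrho)=p_Y(\varrho)$, hence $p_X(\sigma)=p_Y(\sigma)$, and $\langle c_M,\pi\rangle=\int c_M\,d\sigma+\|\kappa\|\le\tfrac12+\tfrac13=\tfrac56<1=P^M$, contradicting $\langle c_M,\pi\rangle\ge P^M$. In the general case one must instead exhibit a Borel set $A$ — assembled from the intervals on which the $\tau_n$ are constant, exploiting that $\sigma$ cannot be essentially diagonal ($\|\lambda_0\|\le\tfrac12$ while $\|\sigma\|\ge\tfrac23$), so a definite amount of its mass is shifted off the diagonal, by $\alpha$ through $\Gamma_1$ on $[\tfrac12,1)$ and in the controlled way dictated by the near-identity maps $T^{(\tau_n)}_\alpha$ on their good sets (Proposition~\ref{P4.1}(iv)) — for which $(\mu-p_X(\sigma))(A)>(\nu-p_Y(\sigma))(A^\beta)$; this contradicts the inequality above and finishes the proof. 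Producing this set $A$, i.e.\ turning the heuristic "$\sigma$ is cheap and large, hence cannot be locally completable" into a genuine failure of compatibility at the scale $\beta(M)$, is the one genuinely combinatorial point and the main obstacle; everything else is bookkeeping.
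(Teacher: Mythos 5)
Your argument for $P^M=D^M=1$ is fine and even a little cleaner than the paper's: you avoid invoking Kellerer's theorem by simply combining weak duality with the explicit primal feasibility of $\pi_0$ and the dual sequence $(\tilde\phi_n,\tilde\psi_n)$, which is what the paper also uses for $D^M\ge 1$.

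The second assertion, however, has a genuine gap that you yourself flag: you try to exhibit an explicit $\beta(M)$ and then derive a direct combinatorial obstruction, but you only settle the sub-case where $\varrho$ charges no $\{c_M=\infty\}$ points, and you leave the ``decisive step'' (producing the witnessing set $A$) unresolved. There is also a more elementary problem upstream: your choice of $\beta$ rests on the claim ``$|\tau_n|\le C_n$,'' but each $\tau_n$ is obtained as the a.s.\ limit $\lim_j \tau_{n-1,j}$ where $\tau_{n,j}$ takes values in $\{-M_j+1,\dots,M_j-1\}$; Proposition \ref{P4.1}(i) only says $\tau_n$ is constant on a \emph{countable} union of intervals, and nothing rules out $\tau_n$ being unbounded, in which case $\beta_n=\inf\{\|k\alpha\|:k\in\mathrm{range}(\tau_n)\setminus\{0\}\}$ could be $0$ and your $\beta$ vanishes. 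The paper sidesteps both difficulties at once by arguing \emph{non-constructively}: assume no $\beta$ works, pick $\beta_n=1/n$, get sequences $(\sigma_n)$, $(\varrho_n)$ with $\varrho_n$ supported on $\Delta^{1/n}$, and use Prokhorov on the polish set $G\times G$ (where $c_M$ is lower semi-continuous) to pass to weak limits $\sigma,\varrho$; the shrinking supports force the limit $\varrho$ to be supported on the diagonal $\Gamma_0$, and then one lands exactly in the easy sub-case you already handled, getting $\int c_M\,d(\sigma+\varrho)\le \tfrac12+\tfrac13<1=P^M$. In short: the compactness argument eliminates both the need to bound $\tau_n$ and the need to build your set $A$; that is the missing idea.
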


\begin{proof}{}
First note that there is an open and dense  subset $G\subseteq
[0,1)$ of full measure $\mu(G)=1$ such that $c_M$, restricted to
$G\times G$ is lower semi-continuous. This follows from assertion
(i) of Proposition \ref{P4.1} by replacing the half open intervals
by their open interior. Noting that $G$ is polish we may apply the
general duality theory \cite{Kell84} to the cost function $c_M$
restricted to $G\times G$ to conclude that there is no duality gap
for the cost function $c_M|_{G\times G}.$ It follows that there is
also no duality gap for the original setting of $c_M$, defined on
$[0,1)\times [0,1),$ either.

We claim that, for every $M\geq 0$, the value $D^M$ of the  dual
problem equals 1. Indeed, let $(\phi_n,\psi_n)^\i_{n=1}$ be a
sequence as in Proposition \ref{P4.1} (iii). Defining
\begin{align*}
\tilde{\phi}_n:=\phi_n -\sum\limits_{j=0}^M
[\phi_n(x)+\psi_n(T^{(\tau_j)}_\alpha
(x))-h(x,T^{(\tau_j)}_\alpha(x))]_+
\end{align*}
and $\tilde{\psi}_n ={\psi}_n$, we have that
\begin{align*}
\tilde{\phi}_n(x)+\tilde{\psi}_n(y)\le h(x,y)\le h_+(x,y),
\end{align*}
for all $(x,y)$ in the graph of $T^0_\alpha ,T^1_\alpha
,T^{(\tau_2)}_\alpha ,\ldots ,T^{(\tau_M)}_\alpha,$ and
\begin{align*}
\lim\limits_{n\to\i} \Big[\int_X \tilde{\phi}_n(x) ~dx+\int_Y \tilde{\psi}_n(y)\,dy\Big]=1,
\end{align*}
showing that $D^M\geq 1.$ It follows that $D^M=P^M=1.$

\medskip

Now suppose that the final assertion of the proposition is wrong
to find a sequence $(\sigma_n)^\i_{n=1} \in\Pipart(\mu,\nu)$ with
$\|\sigma_n\|\geq \tfrac23$ and $\int_{X\times Y} c_M(x,y) \
d\sigma_n (x,y) \le \tfrac12$, as well as a sequence
$(\varrho_n)^\i_{n=1} \in\Pipart(\mu,\nu)$ with $\|\pi_n
+\varrho_n\|=1$ and $\pi_n+\varrho_n\in\Pi(\mu,\nu)$ such that
$\varrho_n$ is supported by
\begin{align}\label{33}
\Delta^{1/n} =\{(x,y)\in [0,1)^2: \delta (x,y)< \tfrac1n\}.
\end{align}

Considering $(\sigma_n)^\i_{n=1}$ as measures on the product
$G\times G$ of the polish space $G$, we then can find by
Prokhorov's theorem a subsequence $(\sigma_{n_k})^\i_{k=1}$
converging weakly on $G\times G$ to some $\sigma\in\Pipart
(\mu,\nu)$, for which we find $\|\sigma\| \geq \tfrac23$ and
$\int_{X\times Y}c(x,y) \ d\sigma(x,y) \le \tfrac12.$ By passing
once more to a subsequence, we may also suppose that
$(\varrho_{n_k})^\i_{k=1}$ weakly converges (as measures on
$G\times G$ or $[0,1)\times [0,1)$; here it does not matter) to
some $\varrho\in\Pipart(\mu,\nu)$ for which we get $\|
\sigma+\varrho\|=1$ and $\sigma + \varrho\in\Pi (\mu,\nu).$ By
\eqref{33} we conclude that $\varrho$ induces the identity
transport from its marginal $p_X(\varrho)$ onto its marginal
$p_Y(\varrho)=p_X(\varrho).$ As $c_M(x,x)=1,$ for $x\in[0,1)$ we
find that $\int_{X\times Y} c_M(x,y) \ d\varrho(x,y) = \|\varrho\|
\le\tfrac13,$ which implies that
$$\int c_M(x,y) \ d(\pi+\varrho)(x,y) \le \tfrac12 +\tfrac13,$$
a contradiction to the fact that $P^M=1$ which finishes the proof.
\end{proof}

We now can proceed to the construction of the example.
\begin{proposition}
Assume the setting of Proposition \ref{P4.1}.
For a subsequence $(i_j)^\i_{j=2}$ of $\{2,3,\ldots \}$ we define the cost function $c :[0,1) \times [0,1) \to [0,\i]$ by
 \begin{align}\label{L8}
c(x,y)=
\begin{cases}
    h_+(x,y), &\mbox{for} \ (x,y) \ \mbox{in the support of}\ T^0_\alpha, T^1_\alpha,
        T^{(\tau_{i_2})}_\alpha,T^{(\tau_{i_3})}_\alpha,\dots ,T^{(\tau_{i_j})}_\alpha, \dots\\
\i, &\mbox{otherwise.} \\
\end{cases}
\end{align}
If $(i_j)^\i_{j=2}$ tends sufficiently fast to infinity we have
that, for this cost function $c$, the primal value $P$ is strictly
positive, while the relaxed primal value $\Prel$ (see
\cite[Example 4.3]{BeLS09a}) as well as the dual value $D$ and the
relaxed dual value $\Drel$ (see \eqref{TagAuf33}) all are equal to
$0$.

In particular there is a duality gap $P-\Drel >0$, disproving the
conjecture mentioned at the beginning of this section.
\end{proposition}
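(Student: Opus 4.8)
The plan is to deduce $D=\Drel=\Prel=0$ from one soft argument and $P>0$ from the rigidity statement of Proposition~\ref{L4.2}, using throughout the chain $0\le D\le\Drel\le P$ recorded after \eqref{TagAuf33}. The only quantitative requirement I impose on the subsequence $(i_j)_{j\ge 2}$, beyond the ``$i_{j+1}$ sufficiently large'' conditions already demanded in Proposition~\ref{P4.1}, is fixed in the course of the $P>0$ argument.

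\emph{Step 1 (proving $\Drel=0$).} It suffices to show $\Drel\le 0$, for then $D=\Drel=0$ and, by the relaxed duality theorem \cite[Theorem 1.2]{BeLS09a}, also $\Prel=D=0$. Let $(\phi,\psi)\in\Psi^{\mathrm{rel}}(\mu,\nu)$. The crucial observation is that $f_{i_j}(x)=h(x,T^{(\tau_{i_j})}_\alpha(x))$ is $\Z$-valued (because $h$ takes the integer values $\varrho_k$ and $T^{(\tau_{i_j})}_\alpha(x)=x\oplus\tau_{i_j}(x)\alpha$), while by \eqref{L3} and \eqref{L3a} it is $L^1$-close, within $2^{-i_j}$, to the profile $g_{i_j}$, which vanishes on a set of measure $1-\eta_{i_j}$. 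Since an integer-valued function whose $L^1$-norm over $\{g_{i_j}=0\}$ is less than $2^{-i_j}$ can be nonzero there only on a set of measure $<2^{-i_j}$, the function $f_{i_j}$ vanishes off a Borel set $B_j$ with $\mu(B_j)\le\eta_{i_j}+2^{-i_j}=:\delta_j$, and $\delta_j\to 0$. Moreover $\int f_{i_j,+}\,d\mu\le\int g_{i_j}\,d\mu+2^{-i_j}\le 1+2^{-i_j}<\i$, so the plan $\pi_{i_j}:=(\mathrm{id},T^{(\tau_{i_j})}_\alpha)_\#\mu$ lies in $\Pi(\mu,\nu,c)$, and the defining inequality of $\Psi^{\mathrm{rel}}$ applied to $\pi_{i_j}$ gives $\phi(x)+\psi(T^{(\tau_{i_j})}_\alpha(x))\le c(x,T^{(\tau_{i_j})}_\alpha(x))=h_+(x,T^{(\tau_{i_j})}_\alpha(x))=f_{i_j,+}(x)$ for $\mu$-a.e.\ $x$, the right-hand side being $0$ for $x\notin B_j$. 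Integrating over $[0,1)\setminus B_j$ and using that $T^{(\tau_{i_j})}_\alpha$ preserves Lebesgue measure, one gets
\begin{align*}
\int_{[0,1)}\phi\,d\mu+\int_{[0,1)}\psi\,d\nu\ \le\ \int_{B_j}|\phi|\,d\mu+\int_{T^{(\tau_{i_j})}_\alpha(B_j)}|\psi|\,d\nu,
\end{align*}
and as both $B_j$ and $T^{(\tau_{i_j})}_\alpha(B_j)$ have measure $\le\delta_j\to 0$ while $\phi,\psi\in L^1$, the right-hand side tends to $0$ by absolute continuity of the integral. Hence $\int\phi\,d\mu+\int\psi\,d\nu\le 0$, so $\Drel\le 0$.

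\emph{Step 2 (proving $P>0$).} Here the subsequence is pinned down: I would choose the $i_j$ inductively so that $2^{-i_{j+1}}<\beta(i_j)$ for every $j\ge 2$, where $\beta(M)>0$ is the constant furnished by Proposition~\ref{L4.2} for the cost $c_M$; at that stage $i_2,\dots,i_j$, hence $c_{i_j}$ and $\beta(i_j)$, are already fixed, so such an $i_{j+1}$ exists. Let $\pi\in\Pi(\mu,\nu)$ with $\int c\,d\pi<\i$; then $\pi$ is carried by $\Gamma_0\cup\Gamma_1\cup\bigcup_{j\ge 2}\Gamma_{i_j}$, where $\tau_0=0$, $\tau_1=1$. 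For $J\ge 2$ put $\sigma_J:=\pi|_{\Gamma_0\cup\Gamma_1\cup\bigcup_{2\le j\le J}\Gamma_{i_j}}$ and $\varrho_J:=\pi-\sigma_J\ge 0$, so $\sigma_J,\varrho_J\in\Pipart(\mu,\nu)$ and $\sigma_J+\varrho_J=\pi$. Since the sets carrying $\sigma_J$ increase with $J$ to a set containing $\mathrm{supp}\,\pi$, we have $\|\varrho_J\|\to 0$; moreover $\varrho_J$ is carried by $\bigcup_{j>J}(\Gamma_{i_j}\setminus\Gamma_0)$, and a point $(x,y)\in\Gamma_{i_j}$ with $j>J$ satisfies, by \eqref{L4}, $\delta(x,y)<2^{-i_j}\le 2^{-i_{J+1}}<\beta(i_J)$, so $\varrho_J$ is supported by $\Delta^{\beta(i_J)}$. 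Suppose, for contradiction, that $\int c\,d\pi\le\tfrac12$, and pick $J$ with $\|\varrho_J\|\le\tfrac13$, so $\|\sigma_J\|\ge\tfrac23$. On $\mathrm{supp}\,\sigma_J$ the costs $c$ and $c_{i_J}$ coincide (both equal $h_+$), whence $\int c_{i_J}\,d\sigma_J=\int c\,d\sigma_J\le\int c\,d\pi\le\tfrac12$; thus $\sigma_J$ satisfies the hypotheses of Proposition~\ref{L4.2} with $M=i_J$, yet $\varrho_J$ is precisely a forbidden completion, namely $\varrho_J\in\Pipart(\mu,\nu)$, $\|\sigma_J+\varrho_J\|=1$, $\sigma_J+\varrho_J=\pi\in\Pi(\mu,\nu)$, and $\varrho_J$ supported by $\Delta^{\beta(i_J)}$ --- a contradiction. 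Hence $\int c\,d\pi>\tfrac12$ for every $\pi\in\Pi(\mu,\nu)$ (trivially so when the integral is $+\i$), so $P\ge\tfrac12>0$, while $P\le\langle c,\pi_0\rangle=1$. Combined with Step~1 this gives the asserted duality gap $P-\Drel=P>0$.

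\emph{Main obstacle.} The delicate part is Step~2: one must decompose an arbitrary finite-cost plan into a bulk piece living on finitely many of the graphs $\Gamma_{i_j}$ and a near-diagonal remainder of vanishing total mass, taking care that the graphs may overlap, and then synchronise the scale $2^{-i_{J+1}}$ of that remainder with the rigidity radius $\beta(i_J)$ of Proposition~\ref{L4.2} --- it is exactly this synchronisation that dictates how fast $(i_j)$ must grow. Step~1, by contrast, is soft, resting only on the integer-valuedness of $f_{i_j}$, its $L^1$-concentration onto a small set, and absolute continuity of the integral.
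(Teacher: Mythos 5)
Your proposal is correct, and the two halves of the argument relate to the paper's proof in different ways.

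For the primal lower bound $P>0$ your argument is essentially the paper's: both isolate a bulk piece $\sigma_J$ of a hypothetical cheap plan $\pi$ living on finitely many of the chosen graphs, and both show that the tail piece $\varrho_J=\pi-\sigma_J$ is a near-diagonal completion forbidden by Proposition~\ref{L4.2}. The only cosmetic difference is bookkeeping of the rigidity radius: the paper applies Proposition~\ref{L4.2} to the ``subsequence cost'' $c_j$ (built from $T^0_\alpha,T^1_\alpha,T^{(\tau_{i_2})}_\alpha,\dots,T^{(\tau_{i_j})}_\alpha$) and forces the resulting $\beta_j$ to be monotone, whereas you apply it with $M=i_j$ to the literal $c_M$ of the proposition's statement (built from $T^0_\alpha,\dots,T^{(\tau_{i_j})}_\alpha$), and then exploit that $2^{-i_j}$ is already decreasing, so no monotonicity of the $\beta$'s is needed. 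Either bookkeeping works, because on $\operatorname{supp}\sigma_J$ the cost functions $c$, $c_j$ and $c_{i_J}$ all coincide with $h_+$.

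For the vanishing of $\Drel$, $D$ and $\Prel$, however, your route is genuinely different and a bit more streamlined. The paper shows $\Prel=0$ directly (transporting the mass $\mu\,\mathbbm{1}_{\{g_n=0\}}$ of size $1-\eta_n$ along $T^{(\tau_n)}_\alpha$ at cost $\le 2^{-n}$), then invokes \cite[Theorem~1.2]{BeLS09a} to get $D=0$, and finally needs a separate argument (that $\{c<\infty\}$ is the countable union of the supports of the $\pi_{i_j}$, so the pointwise and the ``finite-plan a.s.'' constraints on $\phi\oplus\psi$ coincide up to null sets) to conclude $D=\Drel$. You instead bound $\Drel$ directly: testing the $\Psi^{\mathrm{rel}}$-constraint against $\pi_{i_j}$ and using the $\Z$-valuedness of $f_{i_j}$ to conclude that $f_{i_j}$ vanishes off a set of measure $\le\eta_{i_j}+2^{-i_j}$, then invoking absolute continuity of the integral for the $L^1$ functions $\phi,\psi$. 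This hits $\Drel$ in one blow, and $D=0$ and (via \cite[Theorem~1.2]{BeLS09a}) $\Prel=0$ fall out of the chain $0\le D\le\Drel\le 0$ without the auxiliary $D=\Drel$ argument. The integer-valuedness observation is the new ingredient that makes this shortcut available, and it is used correctly: from $\|f_{i_j}-g_{i_j}\|_{L^1}<2^{-i_j}$ and $g_{i_j}=0$ off a set of measure $\eta_{i_j}$, the set $\{f_{i_j}\neq 0\}$ has measure at most $\eta_{i_j}+2^{-i_j}$ precisely because $|f_{i_j}|\ge 1$ wherever it is nonzero. Both approaches require $\pi_{i_j}\in\Pi(\mu,\nu,c)$, which you verify via $\int f_{i_j,+}\,d\mu\le\int g_{i_j}\,d\mu+2^{-i_j}\le 1+2^{-i_j}$, using $g_{i_j}\ge 0$.

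Two very minor points: the tail $\varrho_J$ is carried by $\bigcup_{j>J}\Gamma_{i_j}$ minus the entire bulk $\Gamma_0\cup\Gamma_1\cup\bigcup_{k\le J}\Gamma_{i_k}$, not merely minus $\Gamma_0$ as you wrote, but this plays no role; and the growth constraints from Proposition~\ref{P4.1} bear on $(m_n)$ and $(\tau_n)$, not on the subsequence $(i_j)$, so the only constraint on $(i_j)$ really is the one you impose in Step~2.
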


\begin{proof}{}
We proceed inductively: let $j\geq 2$ and suppose that
$i_0=0,i_1=1,i_2,\dots ,i_j$ have been defined. Apply Proposition
\ref{L4.2} to
 \begin{align*}
c_j(x,y)=
\begin{cases}
h_+(x,y), & \ \mbox{for} \ (x,y) \ \mbox{in the support of}\ T^0_\alpha, T^1_\alpha, T^{(\tau_{i_2})}_\alpha,T^{(\tau_{i_3})}_\alpha,\dots ,T^{(\tau_{i_j})}_\alpha,\\
\i, &\ \mbox{otherwise,} \\
\end{cases}
\end{align*}
to find $\beta_j >0$ satisfying the conclusion of Proposition \ref{L4.2}. We may and do assume that $\beta_j \le \min (\beta_1,\dots ,\beta_{j-1})$. Now choose
$i_{j+1}$ such that
\begin{align}\label{P37}
\delta(x, T^{(\tau_{i_{j+1}})}_\alpha(x)) < \beta_j, \quad x\in[0,1).
\end{align}
This finishes the inductive step and well-defines the cost function $c(x,y)$ in \eqref{L8}.

By \eqref{P32} each $T^{(\tau_{i_j})}_\alpha$ induces a Monge transport $\pi_{i_j} \in\Pi(\mu,\nu)$ which satisfies
$$\int_{X\times Y} h(x,y) \, d\pi_{i_j} (x,y)=\int_X h(x, T^{(\tau_{i_j})}_\alpha) \, dx=1.$$

The fact that the relaxed primal value $\Prel$ for the cost
function $c$ equals zero, directly follows from the definition of
$\Prel$ \cite[Section 1.1]{BeLS09a}, \eqref{L3} and \eqref{L3a} by
transporting the measure $\mu \mathbbm{1}_{\{g_n=0\}}$, which has
mass $1-\eta_n$, via the Monge transport map $T^{(\tau_n)}_\alpha$
where $n$ is a large element of the sequence $(i_j)_{j=1}^\infty$.
Hence we conclude from \cite[Theorem 1.2]{BeLS09a} that the dual
value $D$ of the Monge--Kantorovich problem for the cost function
$c$ defined in \eqref{L8} also equals zero.

Finally observe that we have $D=\Drel$ in the present example:
indeed, the set $\{(x,y)\in[0,1)^2:c(x,y) <\i\}$ is the countable
union of the supports of the finite cost Monge transport plans
$T^0_\alpha, T^1_\alpha,
T^{(\tau_{i_1})}_\alpha,T^{(\tau_{i_2})}_\alpha,\dots
,T^{(\tau_{i_j})}_\alpha,\dots ,$ so that the requirements
$\phi(x)+\psi(y) \le c(x,y),$ for all $(x,y)\in [0,1)^2$, and
$\phi(x)+\psi(y)\le c(x,y), \ \pi$-a.s., for each finite transport
plan $\pi\in\Pi(\mu,\nu)$, coincide (after possibly modifying
$\phi(x)$ on a $\mu$-null set).

\medskip

What remains to prove is that the primal value $P$ satisfies $P>0$. We shall show that, for every transport plan $\pi\in\Pi(\mu,\nu),$ we have
$\int_{X\times Y} c(x,y) \ d\pi (x,y)\geq \tfrac12$.
Assume to the contrary that there is $\pi\in\Pi(\mu,\nu)$ such that
$$
\int\limits_{X\times Y} c(x,y) \ d\pi (x,y)<\tfrac12.
$$
Denoting by $\sigma_j$ the restriction of $\pi$ to the union of
the graphs of the maps $T^0_\alpha,$ $ T^1_\alpha,$ $
T^{(\tau_{i_1})}_\alpha,$ $T^{(\tau_{i_2})}_\alpha,$ \dots$
,T^{(\tau_{i_j})}_\alpha,$ each $\sigma_j$ is a partial transport
in $\Pipart(\mu,\nu)$ and the norms $(\|\sigma_j\|)^\i_{j=1}$
increase to one. Choose $j$ such that $$\|\sigma_j\| >\tfrac23.$$
We apply Proposition \ref{L4.2} to conclude  that there is no
partial transport plan $\varrho_j$ such that $\pi_j+\varrho_j
\in\Pi(\mu,\nu)$, and such that $\varrho_j$ is supported by
$\Delta^{\beta_j}.$
But this is a contradiction  as $\varrho_j =\pi -\sigma_j$ has precisely these properties by \eqref{P37}.
\end{proof}

\begin{proof}[Proof of Proposition \ref{P4.1}:] The construction of the example described by Proposition \ref{P4.1} will be an extension of the construction in the
previous section from which we freely use the notation.

We shall proceed by induction on $j\in\N$ and define a double-indexed family of maps $\tau_{n,j}:[0,1)\to \Z$, where $1\le n\le j.$

{\bf
Step $j=1$:} Define
$$\tau_{1,1}:[0,1)\to\Z$$
as $$\tau_{1,1} =-\tau_1,$$ where we have $m_1=M_1, \alpha_1=\tfrac{1}{M_1}$ and $\tau_1$ as in \eqref{cal} above. At this stage the only difference to
the previous section is that we change the sign of $\tau_1$ as we now shall build up a ``positive singular mass'', as opposed to the ``negative singular mass''
which we constructed in the previous section. More precisely, defining $\phi^{1},\psi^{1}$ as in \eqref{choice}, we obtain, similarly as in
\eqref{shortcal}
\begin{align*}
\phi^{1}(x)+\psi^{1} (T^{(\tau_{1,1})}_{\alpha_1}(x))=
\begin{cases}
0, &  \mbox{for} \ x \in I_{k_1}, k_1\in \{2,\dots ,(M_1-1)/2,\\
    & \phantom{ \mbox{for} \ x \in } (M_1+3)/2,\dots ,M_1-1\},\\
(M_1-1)/2, & \mbox{for} \ x \in I_{k_1}, k_1=1, M_1, \\
1, & \mbox{for} \ x \in I_{(M_1+1)/2}.\\
\end{cases}
\end{align*}

This finishes the inductive step for $j=1.$

{\bf Step $j=2$:} Let $m_2$ and $M_2=M_1m_2$ be as in section 3, where $m_2$ satisfies the requirements of Lemma 3.1, and still is free to be eventually
specified. To define $\tau_{1,2}:[0,1)\to \Z$ we want to make sure that the map $T^{(\tau_{1,2})}_{\alpha_2}$ maps the intervals $I_{k_1}$ bijectively onto
$T^{(\tau_{1,1})}_{\alpha_1}(I_{k_1}).$ Using the notation of the previous section, we consider {\it all} the intervals $I_{k_1}$ as ``{\it good}''
intervals so that we do not have to take extra care of some {\it ``singular''} intervals.

More precisely, fix $1\le k_1\le M_1,$ and write $\tau$ for $\tau_{1,1}|_{I_{k_1}}.$
If $\tau >0$, define $J^{k_1,c}$ as $\{m_2-\tau +1,\ldots ,m_2\},$ i.e.~the set of those indices $k_2$ such that the interval $I_{k_1,k_2}$ is not mapped into
$T^{(\tau_{1,1})}_{\alpha_1}(I_{k_1})$ under $T^{(\tau_{1,1})}_{\alpha_2}.$ If $\tau <0$, we define $J^{k_1,c}$ as $\{1,\ldots ,|\tau|\},$ and if $\tau =0,$
we define $J^{k_1,c}$ as the empty set. The complement $\{1,\ldots ,m_2\}\backslash J^{k_1,c}$ is denoted by $J^{k_1,u}.$

Define $\tau_{1,2}:=\tau_{1,1}=\tau$ on the intervals $I_{k_1,k_2},$ for $k_2\in J^{k_1,u}.$ On the remaining intervals $I_{k_1,k_2}$ with $k_2\in J^{k_1,c}$
we define $\tau_{1,2}$ such that it takes constant values in $\{-M_2+1,\ldots ,M_2-1\}$ on each of these intervals, such that \eqref{p24a} (resp.~\eqref{p24b}
is satisfied, and such that these intervals $I_{k_1,k_2}$ are mapped onto the ``remaining gaps'' in $T^{(\tau_{1,1})}_{\alpha_1}(I_{k_1}).$

Using again Lemma 3.3 we resume the properties of the thus constructed map $T^{(\tau_{1,2})}_{\alpha_2}:[0,1)\to [0,1).$

\begin{enumerate}
\item
The measure-preserving bijection $T^{(\tau_{1,2})}_{\alpha_2}$ maps each interval $I_{k_1}$ onto $T^{(\tau_{1,1})}_{\alpha_1}(I_{k_1}).$
It induces a permutation of the intervals
$I_{k_1,k_2},$ where $1\le k_1\le M_1, 1\le k_2\le m_2.$
\item
Defining $\phi^2,\psi^2$ as in \eqref{K1a} we get, for each $1\le k_1\le M_1,$ similarly as in \eqref{p33} and \eqref{K8a}
\begin{align*}
\mu[I_{k_1} \cap \{\tau_{1,2}\neq\tau_{1,1}\}]\leq \tfrac{M_1}{m_2} \mu [I_{k_1}],
\end{align*}
as well as
\begin{align*}
\sum\limits^{M_1}_{k_1=1} \ \int_{I_{k_1}} |(\phi^1(x)-\phi^1(T^{(\tau_{1,1})}_{\alpha_1}(x)) -(\phi^2 (x)-\phi^2 (T^{(\tau_{1,2})}_{\alpha_2} (x))|dx
 < \frac{4M^2_1}{m_2} .
\end{align*}
\item
On the middle interval $I^1_\text{middle} =I_{\frac{M_1+1}{2}}$ we have $\tau_{1,2} =\tau_{1,1} =0$.
\end{enumerate}

\medskip

We now pass to the construction of the map $\tau_{2,2}:[0,1)\to\Z.$ We define, for each $1\le k_1\le M_1,$ and $x\in I_{k_1,k_2},$
\begin{align*}
\tau_{2,2}(x)=
\begin{cases}
a_2 (k_2), & \mbox{for} \ k_2\in\{1,\ldots ,M_1\} \\
-M_1, & \mbox{for} \ k_2\in\{M_1+1,\ldots ,(m_2-1)/ 2\}, \\
0, & \mbox{for} \ k_2=(m_2+1)/ 2, \\
M_1, & \mbox{for} \ k_2 \in\{(m_2+3)/ 2,\ldots, m_2-M_1\},\\
a_2 (k_2), & \mbox{for} \ k_2\in\{m_2-M_1+1,\ldots ,m_2\}.
\end{cases}
\end{align*}

The definition of the function $a_2$ on the ``singular'' intervals $I_{k_1,k_2}$, where $k_2\in\{1,\ldots ,M_1\} \cup \{m_2 -M_1+1,\ldots ,m_2\}$ is done
such that $T^{(\tau_{2,2})}_{\alpha_2}$ maps these intervals onto ``remaining gaps'' $I_{k_1,l_2}$, where $l_2$ runs through the set
$$\{(m_2-1)/2-M_1+1,\ldots ,(m_2-1)/2\} \cup \{(m_2+3)/2,\ldots ,(m_2+3)/2+M_1-1\}$$
in the middle region of the interval $I_{k_1}.$ As above we require in addition that $a_2$ on each $I_{k_1,k_2}$ takes constant values  in
$\{-M_2+1,\ldots ,M_2-1\}$ and that \eqref{p24a} (resp.~\eqref{p24b}) is satisfied.

The function $\tau_{2,2}$ mimics the construction of $\tau_{1,1}$ above, with the role of $[0,1)$ replaced by each of the intervals $I_{k_1}$,
for $1\le k_1\le M_1.$ The idea is that, $T^{M_1}_{\alpha_1}$ being the identity map, we have that $T^{M_1}_{\alpha_2}$ satisfies $T^{M_1}_{\alpha_2}(x) =x
\oplus\tfrac{M_1}{M_2}$ and $\tfrac{M_1}{M_2}=\tfrac{1}{m_2}$ is small. Hence the role of $T_{\alpha_1}$ in the previous section now is taken by
$T^{M_1}_{\alpha_2}$.

More precisely, we have, for each $k_1=1,\ldots ,M_1$, and $x\in I_{k_1,k_2}$
\begin{align}\label{page41}
 \phi^2(x)+&\psi^2(T^{(\tau_{2,2})}_{\alpha_2}(x))= \nonumber\\
& =
\begin{cases}
0,  & \mbox{for} \ k_2\in\{M_1+1,\ldots ,(m_2-1)/ 2, \\
& \phantom{\mbox{for} \ k_2\in}(m_2+3)/2,\ldots ,m_2-M_1\}, \\
\tfrac{m_2}{2M_1} +\gamma(M_1), & \mbox{for}\  k_2\in\{1,\ldots ,M_1\}\cup \{m_2-M_1+1,\ldots ,m_2\}, \\
1,  & \mbox{for} \ k_2=(m_2+1)/2.
\end{cases}
\end{align}
The notation $\gamma(M_1)$ denotes a quantity verifying $|\gamma(M_1)|\le c(M_1)$ for some constant $c(M_1)$, depending only on $M_1.$ The verification of
\eqref{page41} uses Lemma 3.3 and is analogous as in section 3.

\medskip

As $T^{(\tau_{2,2})}_{\alpha_2}$ defines a measure preserving bijection on $[0,1)$, we get
\begin{align}\label{97}
\int^1_0(\phi^2(x)+\psi^2 (T^{(\tau_{2,2})}_{\alpha_2}(x))~dx= \int^1_0(\phi^2(x)+\psi^2(x))\,dx=1.
\end{align}

This finishes the inductive step for $j=2.$

{\bf General Inductive step:} For prime numbers $m_1,\ldots ,m_{j-1}$ as in the previous section suppose that we have defined, for $1\le n\le j-1$ maps
$\tau_{n,j}:[0,1)\to\Z$ such that the following inductive hypotheses are satisfied.
\begin{enumerate}
\item
For $1\le n\le j-1,$ the measure preserving bijection $T^{(\tau_{n,j-1})}_{\alpha_i}:[0,1)\to [0,1)$ maps the intervals $I_{k_1,\ldots , k_{n-1}}$ onto
themselves. It induces a permutation of the intervals $I_{k_1,\ldots ,k_{j-1}},$ where $1\le k_1\le m_1,\ldots ,1\le k_{j-1}\le m_{j-1}.$
\item
For $1\le n <j-1$ we have, for $1\le k_1\le m_1,\ldots ,1\le k_{j-2}\le m_{j-2},$
\begin{align}\label{page41a}
\mu[I_{k_1,\ldots ,k_{j-2}} \cap \{\tau_{n,j-2}\neq\tau_{n,j-1}\}]\leq \tfrac{M_{j-2}}{m_{j-1}} \mu [I_{k_1,\ldots ,k_{j-2}}],
\end{align}
and
\begin{align}\label{The111}
\begin{split}
& \sum_{1\le k_1\le m_1,\ldots ,1\le k_{j-2}\le m_{j-2}} \ \ \int_{I_{k_1,\dots ,k_{j-2}}} \Big|\Big(\phi^{j-2}(x)-\phi^{j-2}
(T^{(\tau_{n,j-2})}_{\alpha_{j-2}}(x))\Big)  \\
& -\Big(\phi^{j-1}(x)-\phi^{j-1}
(T^{(\tau_{n,j-1})}_{\alpha_{j-1}}(x))\Big) \Big| dx <\tfrac{c(M_1,\dots ,M_{j-2})}{m_{j-1}}.
\end{split}
\end{align}
\end{enumerate}

\medskip

We now shall define $\tau_{n,j}:[0,1)\to\Z,$ for $1\le n \le j$ and $\tau_{j,j}:[0,1)\to\Z.$

Fix $1\le n\le j-1$ as well as $1\le k_1\le m_1,\ldots ,1\le k_{j-1}\le m_{j-1}.$ Denote by $\tau$ the constant value $\tau_{n, j-1}|_{I_{k_1,\ldots ,k_{j-1}}}.$
If $\tau >0$ define $J^{k_1,\ldots ,k_{j-1},c}$ as $\{m_j-\tau +1,\ldots ,m_j\}$, similarly as for the case $j=2$ above. If $\tau \le 0$ define
$J^{k_1,\ldots ,k_{j-1},c}$ as $\{1,\ldots ,|\tau|\}$ which, for $\tau=0$, equals the empty set.
On the intervals $I_{k_1,\ldots ,k_{j-1},k_j}$ where $k_j$ lies in the complement $J^{k_1,\ldots ,k_{j-1},u}=\{1,\ldots ,m_j\}\backslash J^{k_1,\ldots ,k_{j-1},c}$
we define $\tau_{n,j}:=\tau_{n,j-1}.$ On the remaining intervals $I_{k_1,\ldots ,k_{j-1},k_j},$ where $k_j\in J^{k_1,\ldots ,k_{j-1},c},$ we define $\tau_{n,j}$ in
such a way that it takes constant values in $\{-M_j+1,\ldots ,M_j-1\}$ on each of these intervals, such that \eqref{p24a} (resp. \eqref{p24b}) is satisfied,
and such that these intervals $I_{k_1,\ldots ,k_{j-1},k_j}$ are mapped onto the ``remaining gaps'' in $T^{(\tau_{n,j-1})}_{\alpha_{j-1}}(I_{k_1,\ldots ,k_{j-1}}).$

Similarly as in the previous section we thus well-define the function $\tau_{n,j}$ which then verifies \eqref{page41a} and \eqref{The111}, with $j-1$
replaced by $j$.

\medskip

We still have to define $\tau_{j,j}:[0,1)\to\Z.$ For $1\le k_1\le m_1,\ldots ,1\le k_{j-1}\le m_{j-1}$, we define $\tau_{j,j}(x)$ on the intervals
$I_{k_1,\ldots ,k_{j-1},k_j}$ by
\begin{align*}
\tau_{j,j}(x)=
\begin{cases}
a_j (k_j), & \mbox{for} \ k_j\in\{1,\ldots ,M_{j-1}\} \\
-M_j, & \mbox{for} \ k_j\in\{M_{j-1}+1,\ldots ,(m_j-1)/ 2\}, \\
0, & \mbox{for} \ k_j=(m_j+1)/ 2, \\
M_j, & \mbox{for} \ k_j \in\{(m_j+3)/ 2,\ldots, m_j-M_{j-1}\},\\
a_j (k_j), & \mbox{for} \ k_j\in\{m_j-M_{j-1}+1,\ldots ,m_j\}.
\end{cases}
\end{align*}

Similarly as in step $j=2$ the $\{-M_j+1,\ldots ,M_j-1\}$-valued function $a_j(k_j)$ is defined in such a way that $T^{(\tau_j)}_{\alpha_j}$ maps the intervals
$I_{k_1,\ldots ,k_{j-1},k_j}$ with $k_j\in\{1,\ldots ,M_{j-1}\}\cup \{m_j-M_{j-1}+1,\ldots ,m_j\}$ to the intervals $I_{k_1,\ldots ,k_{j-1},k_j},$ where
$k_j$ runs through the ``middle region''
$$\{(m_j-1)/2-M_{j-1}+1,\ldots ,(m_j-1)/2\}\cup \{(m_j+3)/2,\ldots ,(m_j+3)/2+M_{j-1}-1\}.$$ We now deduce from
Lemma 3.3 that, for $x\in I_{k_1,\ldots ,k_{j-1},k_j}$
\begin{align*}
&\phi^j(x)+\phi^j(T^{(\tau_{j,j})}_{\alpha_j}(x))=\\
&=
\begin{cases}
0,  & \mbox{for} \ k_j\in \{M_{j-1}+1,\ldots ,(m_j-1)/2\} \\
&\phantom{\mbox{for} \ k_j\in} \cup\{(m_j+3)/2,\ldots ,m_j-M_{j-1}\}, \\
\tfrac{m_j}{2M_{j-1}} +\gamma(M_1,\ldots ,M_{j-1}), &  \mbox{for} \ k_j\in    \{1,\ldots ,M_{j-1}\}\cup \{m_j-M_{j-1}+1,\ldots ,m_j\}, \\
1,  & \mbox{for} \ k_j=(m_j+1)/2,
\end{cases}
\end{align*}
where $\gamma(M_1, \ldots, M_{j-1})$ denotes a quantity which is bounded in absolute value by a constant $c(M_1, \ldots, M_{j-1})$ depending only on $M_1, \ldots, M_{j-1}$.

This completes the inductive step.

\bigskip

We now define $\tau_0=0,\tau_1=1$ and, for $n\geq 2$
\begin{align}\label{SSS}
\tau_n=\lim\limits_{j\to\i} \tau_{n-1,j}.
\end{align}


It follows from \eqref{page41a} that, for each $n\geq 2$, the
limit \eqref{SSS} exists almost surely provided the sequence
$(m_n)_{n=1}^\infty$ converges sufficiently fast to infinity,
similarly as in section 3 above. The $(\tau_n)^\i_{n=0}$ and the
above constructed functions $(\phi_n, \psi_n)_{n=1}^\infty$
satisfy the assertions of Proposition \ref{P4.1}. The verification
of items (i), (ii), and (iii) is analogous to the arguments of
section 3 and therefore skipped. As regards assertions (iv) note
that, for $1\leq n\leq j$ the function
$T_{\alpha_j}^{(\tau_{n,j})}$ maps the intervals $I_{k_1,\ldots,
k_{n-1}} $ onto themselves. It follows that
$T_{\alpha}^{(\tau_n)}$ does so too, whence
$$\delta(x, T_\alpha^{(\tau_n)}(x))< M_n^{-1},$$
which readily shows \eqref{L4}.
\end{proof}

\def\ocirc#1{\ifmmode\setbox0=\hbox{$#1$}\dimen0=\ht0 \advance\dimen0
  by1pt\rlap{\hbox to\wd0{\hss\raise\dimen0
  \hbox{\hskip.2em$\scriptscriptstyle\circ$}\hss}}#1\else {\accent"17 #1}\fi}

\end{document}